\documentclass[11pt]{article}
\usepackage{amsmath}
\usepackage{amsthm}
\usepackage[xetex]{graphicx}
\usepackage{epstopdf}
\usepackage{amssymb,enumerate}
\usepackage{graphicx}
\usepackage{latexsym}
\newtheorem{thm}{Theorem}[section]

\newtheorem{lem}[thm]{Lemma}
\newtheorem{prop}[thm]{Proposition}

\newtheorem{defn}[thm]{Definition}

\newtheorem{rem}[thm]{Remark}

\makeatletter

\newcommand{\Rmnum}[1]
{expandafter\@slowromancap\romannumeral#1@}
\makeatother
\numberwithin{equation}{section}
\numberwithin{figure}{section}
\allowdisplaybreaks[4]
\begin{document}
	\title{Large deviation principles for 2D stochastic $g$-Navier--Stokes equations on unbounded channel-like domains}
	\author{Gang He$^{1}$, \,\,\,Ji Shu$^{1}$\thanks{Corresponding author. \textit{E-mail addresses}:  hegang2025@stu.sicnu.edu.cn(G.He), shuji@sicnu.edu.cn(J.Shu), wenhuixia@cafuc.edu.cn(H.Wen),  wulongyul@163.com(L.Wu).},\,\,\,Huixia Wen$^{1,2}$,\,\,\,Longyu Wu$^{3}$\small\\
		\textit{1.School of Mathematical Sciences, Sichuan Normal University},\\ \textit{Chengdu 610068, PR China}	\small\\
		\textit{2.Faculty of Science, Civil Aviation Flight University of China},\\ \textit{Chengdu 618307, PR China}    \small\\
		\textit{3.College of Applied Mathematics, Chengdu University of Information Technology},\\ \textit{Chengdu 618307, PR China}}
		
	\date{}
	
	\maketitle
	
	{\begin{center}
			\begin{minipage}{6.0in}
				{\textbf{Abstract:} This paper investigates the large deviation principle for 2D stochastic $g$-Navier--Stokes equations driven by multiplicative noise and defined on an unbounded channel-like domain. We first prove the existence and uniqueness of probabilistically strong solutions for  the stochastic equation under small noise perturbations by means of the local monotonicity method. Subsequently, we show the well-posedness of solutions to the corresponding control equation and the convergence in distribution of the solutions for the stochastic equation as the noise intensity tends to zero. Finally, we establish the Freidlin-Wentzell type large deviation principle for the solutions of the stochastic equation by using the weak convergence method. To overcome the lack of compactness of Sobolev embedding on unbounded domains, we employ a scalar stream function together with a truncation technique to derive uniform tail estimates for the solutions of the control equation. }\\
				
				{\textbf{Key words:} Stochastic $g$-Navier--Stokes equations; Large deviations principle; Weak convergence method; Uniform tail estimate; Poincar\'e inequality }\\
		\end{minipage}
	\end{center}}
	\section{Introduction}
	\indent
	\par
	Let $\mathcal{O}$ be an unbounded domain in $\mathbb{R}^{2}$ with smooth boundary $\partial \mathcal{O}$. In this paper, we study the large deviation principle (LDP) for 2D stochastic $g$-Navier--Stokes equations driven by multiplicative noise:\\
	\begin{equation}
		\label{eq:1.1}
		\begin{aligned}
			\begin{split}
				\left \{
				\begin{array}{ll}
					du^{\varepsilon}-(\nu\Delta u^{\varepsilon}- (u^{\varepsilon}\cdot \nabla )u^{\varepsilon} - \nabla p)dt=f(t)dt+\sqrt{\varepsilon}\sigma(t,u^{\varepsilon})dW(t), & x\in\mathcal{O}, t>s,\\
					\nabla\cdot(gu^{\varepsilon})=0,                                             & x\in\mathcal{O}, t>s, \\
					u^{\varepsilon}=0,                                                           & x\in\partial\mathcal{O}, \\
					u^{\varepsilon}(s, x)=u_{0}(x),                                                   & x\in\mathcal{O},
				\end{array}
				\right.
			\end{split}
		\end{aligned}
	\end{equation}
	where $\nu > 0$ is the viscosity coefficient, $\varepsilon \in (0,1)$ is the noise-intensity
	parameter, $p$ denotes the pressure, $u$ is the velocity field, $\Delta$ is the Laplacian operator with Dirichlet boundary conditions, $\sigma$ is a Lipschitz continuous mapping, and $W(t)$ is the driving Wiener process. 
	\par 
	We assume that the domain supports the weighted Poincar\'e inequality
	\begin{equation}
		\label{eq:2.2}
		\int_{\mathcal{O}}^{}|u|^{2}gdx \le \frac{1}{\lambda_{1}}\int_{\mathcal{O}}^{}|\nabla u|^{2}gdx ,\quad  \forall u \in H^{1}_{0}(\mathcal{O}),
	\end{equation}
	for some $\lambda_{1}>0$. We further assume that the weight function $g = g(x_{1}, x_{2}) \in W^{1, \infty}(\mathcal{O}) $ is real-valued and that there exist positive constants $m_{0}$ , $M_{0}$ and $\lambda_{1}$ such that
	\begin{equation}
		\label{eq:1.2}
		0 < m_{0} \le g(x_{1}, x_{2}) \le M_{0},\; for\,\,all (x_{1},  x_{2}) \in \mathcal{O}, \quad \|\nabla g\|_{\infty} \le \dfrac{1}{16}m_{0}\sqrt{\lambda_{1}}.
	\end{equation} 
\par
The Navier--Stokes equations, as the classical fluid dynamics equations, are extensively employed to model the motion of viscous incompressible fluids. Their applications span various scientific and engineering fields, including meteorology, oceanography, and engineering, as well as they have considerable theoretical and practical relevance (see, e.g., \cite{ref R. Temam1, ref R. Temam2}). In the past two decades, substantial progress has been made in the study of 2D stochastic Navier--Stokes equations. The well-posedness, invariant measures, and attractors associated with 2D stochastic Navier--Stokes equations have been widely investigated in the literature (see, e.g., \cite{ref Brzezniak. Z1, ref Brzezniak. Z2, ref Da Prato1,ref Sell. G.R,  ref Liu. R1, ref Zhao. C}).
\par 
Roh \cite{ref Roh. J1} first introduced the $g$-Navier--Stokes equations in 1999. In this model, the varying vertical thickness of a three-dimensional domain is represented by a strictly positive spatial function $g=g(x_{1}, x_{2})$. Under this framework, the classical incompressibility condition is modified into the weighted form $\nabla \cdot (gu) = 0$. This formulation allows the fluid dynamics to be described on a 2D domain $\mathcal{O} \subset \mathbb{R}^2$, while retaining the profound impact of the topographic variations of a 3D thin domain on the dynamic evolution of the fluid. Subsequently, the well-posedness, attractors, and asymptotic behavior of the $g$-Navier--Stokes equations have been explored in \cite{ref Bae. H, ref Li. Y1, ref Li. Y2, ref Yan. T, ref Quyet. D.T}. To the best of the authors' knowledge, the LDP for the 2D stochastic $g$-Navier--Stokes equations on unbounded channel-like domains has not yet been investigated.
\par
The main purpose of this paper is to investigate the LDP for strong solutions of 2D stochastic $g$-Navier--Stokes equations driven by multiplicative noise on channel-like domains. We use the 
weak convergence method, which is based on the variational representation of certain functionals of Brownian motion and the equivalence between the LDP and Laplace principle (see \cite{ref A. Budhiraja1, ref A. Budhiraja2}). General treatments of large deviations and their applications are given in \cite{ref A. Dembo, ref P. Dupuis, ref Da Prato3}. Related LDPs have been obtained for stochastic ordinary differential equations \cite{ref Bao. J}, stochastic lattice systems \cite{ref Chen. Z1, ref Wang. B1}, and stochastic partial differential equations \cite{ref Brzezniak. Z2, ref Cerrai. S, ref Duan. J, ref Hong. W}.
\par 
It is noteworthy that on bounded domains, the LDP for stochastic differential equations can be effectively established through the weak convergence method, which relies on the variational representation developed by Budhiraja and Dupuis. For example, Chang \cite{ref M.-H. Chang} established a large deviation result of Freidlin--Wentzell-type for the 2D stochastic Navier--Stokes equations driven by additive noise via the Girsanov transformation. Caraballo et al. \cite{ref Caraballo. T} employed the weak convergence method to derive a large deviation principle of Freidlin--Wentzell-type for the 3D globally modified stochastic Navier--Stokes equations. This is due to the fact that on bounded domains one can apply the compact embedding theorems for Sobolev spaces.  As the noise intensity $\varepsilon \to 0$, one can employ Galerkin approximations and a priori estimates to extract weakly convergent subsequences, as well as thereby establish the tightness of the distributions of the solution family.
\par 
However, when attempting to establish the LDP on unbounded domains, a major difficulty arises from 
the failure of the Sobolev compact embedding theorem. To overcome this obstacle, Wang \cite{ref Wang. B3} introduced uniform tail estimates for solutions and successfully established the LDP for fractional stochastic reaction-diffusion equations on $\mathbb{R}^n$ with polynomial drift of arbitrary order using the weak convergence method. Similarly, Chen \cite{ref Chen. Z2} extended the polynomial drift condition to more general superlinear drift conditions and obtained LDP results for stochastic nonclassical reaction-diffusion equations. On the other hand, for a class of stochastic Navier-Stokes equations with Lipschitz drift satisfying linear growth conditions, the weak convergence approach remains applicable even in unbounded domains. For example, Sritharan \cite{ref S. S. Sritharan} established LDPs for the 2D stochastic Navier-Stokes equations driven by multiplicative noise, both in bounded and unbounded settings.
\par 
Note that since the stochastic $g$-Navier--Stokes equations form a vector system subject to the incompressibility constraint $\nabla \cdot (gu) = 0$, the technique of uniform tail estimates used for stochastic reaction-diffusion equations does not carry over directly to the stochastic $g$-Navier--Stokes equations. Specifically, both the stochastic Navier--Stokes equations and the stochastic $g$-Navier--Stokes equations contain the pressure term $\nabla p$. Compared with the classical Navier--Stokes setting, the dissipative terms involving the $g$-Laplace operator are handled via a generalized Poincar\'e inequality adapted to the channel-like geometry. In order to apply the truncation technique to obtain uniform tail estimates for the solutions, uniform estimate of the pressure term on the unbounded domain $\mathcal{O}$ is required, which is generally difficult to obtain on an unbounded domain. Alternatively, the projection method may be used to eliminate the pressure term. Note that although the equation itself no longer contains the pressure term and the truncation technique can be applied to obtain uniform tail estimates for the solutions, the incompressibility condition no longer hold.  Specifically, even if the original velocity field satisfies the weighted incompressibility condition (i.e., $\nabla \cdot (gu) = 0$), applying a cut-off  function to it generally results in a field that the truncated field generally fails to satisfy the same weighted incompressibility condition.
\par 
Following Wang \cite{ref Wang. B4}, we introduce a scalar stream function that eliminates the pressure and preserves the divergence structure. This transformation reduces the stochastic $g$-Navier--Stokes system to a scalar equation; see \eqref{eq:3.70}. Then, the truncation technique is applied to derive uniform tail estimates for the solutions of the associated control equation. The compactness of the solution family for the controlled equation plays a key role in establishing the convergence in distribution of the solutions of the stochastic equation as $\varepsilon \to 0$ (see Lemma \ref{lem:1.20}), and this convergence is then essential for deriving the LDP for the stochastic equation (see Theorem \ref{thm:1.12}). 
\par 
To establish the LDP for the 2D stochastic $g$-Navier--Stokes equations, we employ the weak convergence method, which relies on the equivalence between the Laplace principle (LP) and LDP. Next, we use a scalar stream function to transform the control equation corresponding to \eqref{eq:1.1} into a scalar equation (see \eqref{eq:3.70}), and show that the solutions of the control equation are uniformly small outside a bounded domain on a finite time interval (see Lemma \ref{lem:3.6}). Based on the uniform tail estimates of the solutions and the compactness of embeddings in bounded domains, we then prove the compactness of a family of solutions of the control equation (see Lemma \ref{lem:3.38}). Finally, we establish the required convergence as the noise intensity tends to zero (see Lemma \ref{lem:1.20}) and prove the LDP for \eqref{eq:1.1} in $C([0,T]; H_{g}) \cap L^{2}(0,T; V_{g})$ using the Skorokhod representation theorem.
\par 
This paper is organized as follows. In Section 2, we discuss the existence and uniqueness of strong solutions to equation \eqref{eq:1.1} and introduce the weak convergence method for the LDP. In Section 3, we establish the LDP for \eqref{eq:1.1} in $C([0,T]; H_{g}) \cap L^{2}(0,T; V_{g})$. 
\par 
\section{Preliminaries}
\indent
\par
In this section, we first discuss some assumptions regarding the nonlinear term in \eqref{eq:1.1}, which are necessary for establishing the LDP for \eqref{eq:1.1} defined on $\mathcal{O}$. Then, we will give a proof of the existence and uniqueness of the strong solution of \eqref{eq:1.1}. In addition, we recall the weak convergence theory of the LDP for stochastic systems as introduced in references \cite{ref A. Budhiraja1, ref P. Dupuis}.
\par
\subsection{Well-posedness of  2D stochastic $g$-Navier-Stokes equations}
\par 

We first introduce the functional setting for \eqref{eq:1.1}. Let $H_{g}(\mathcal{O})$ be the closure of $\mathcal{V}(\mathcal{O})$ in $\mathbb{L}^{2}_{g}(\mathcal{O}):=L^{2}_{g}(\mathcal{O})^{2}$, where
\[
\mathcal{V}(\mathcal{O})=\{u \in C^{\infty}_{0}:\nabla \cdot (gu)=0\}.
\]
Then $H_{g}(\mathcal{O})$ is a Hilbert space with the weighted inner product
\[
(u,v)_{g}= \int_{\mathcal{O}}u \cdot vg dx = \sum_{i=1}^{2}(u_{i},v_{i})_{g}, 
\]
and the norm
\[
\|u\|_{g}=(u,u)^{\frac{1}{2}}_{g}, \; \forall u \in H_{g}(\mathcal{O}).
\]
Let $V_{g}(\mathcal{O})$ be the closure of $\mathcal{V}(\mathcal{O})$ in $\mathbb{H}^{1}_{0,g}(\mathcal{O})^{2}:= H^{1}_{0,g}(\mathcal{O})^{2}$. The inner product is given by
\[
((u,v))_{g}:=(Du,Dv)_{g} = \sum_{i,j=1}^{2} \int_{\mathcal{O}}^{} \frac{\partial{u_{j}}}{\partial{x_{i}}}\frac{\partial{v_{j}}}{\partial{x_{i}}}gdx, 
\]
with associated norm
\[
\|u\|_{V_{g}} = (Du, Du)^{\frac{1}{2}}_{g}, \; \forall \; u \in V_{g}(\mathcal{O}), 
\]
where $Du =(\dfrac{\partial u_{1}}{\partial_{ x_{1}}}, \dfrac{\partial u_{1}}{\partial_{ x_{2}}}, \dfrac{\partial u_{2}}{\partial_{ x_{1}}}, \dfrac{\partial u_{2}}{\partial_{ x_{2}}})$. We now define the $g$-Laplace operator by
\[
- \Delta_{g}u= -\frac{1}{g}(\nabla \cdot g\nabla)u=-\Delta u-\frac{1}{g}(\nabla g \cdot \nabla)u.
\]
Then, the first equation in \eqref{eq:1.1} can be rewritten as
\begin{equation}
	\label{eq:1.100}
		du^{\varepsilon}-(\nu\Delta_{g}u^{\varepsilon}-\frac{\nu}{g}(\nabla g \cdot \nabla)u^{\varepsilon}- 
		(u^{\varepsilon}\cdot \nabla )u^{\varepsilon} - \nabla p)dt=f(t)dt+\sqrt{\varepsilon}\sigma(t,u^{\varepsilon})dW(t),  x\in\mathcal{O}, t>s.
\end{equation}
By the $g$-orthogonal projection $P_{g}: \mathbb{L}^{2}_{g}(\mathcal{O}) \to H_{g}(\mathcal{O})$ , we define the $g$-Stokes operator \cite{ref Roh. J2} as
\[
A_{g}u=-P_{g}\Delta u,\; \langle A_{g}u,v \rangle_{g} = \langle A_{g}u,gv\rangle = ((u,v))_g,\quad \forall u,v \in V_{g}(\mathcal{O}).
\]
\par 
Let $V_{g}^{*}(\mathcal{O})$ be the dual space of $V_{g}(\mathcal{O})$. We have the dense, continuous embedding
\[
V_{g} \hookrightarrow H_{g} = H^{*}_{g} \hookrightarrow V^{*}_{g}.
\]
The operator $A_{g} : V_{g}(\mathcal{O}) \to V_{g}^{*}(\mathcal{O})$ is bounded with $\| A_{g} \| \le \frac{1}{m_{0}}$.
Indeed, for $\|v \|_{V_{g}} = 1$,
\begin{equation}
	\label{eq:2.57}
	\begin{aligned}
	| \langle A_{g}u, v \rangle | \le \frac{\|u\|_{V_{g}(\mathcal{O})}\|v\|_{V_{g}(\mathcal{O})}}{g(x)} \le \frac{1}{m_{0}}\|u\|_{V_{g}(\mathcal{O})}\|v\|_{V_{g}(\mathcal{O})},	
	\end{aligned}
\end{equation}
which means $\| A_{g}u \|_{V_{g}^{*}(\mathcal{O})}  \le \frac{1}{m_{0}}\|u\|_{V_{g}} $ and so $\| A_{g}u \| \le \frac{1}{m_{0}}\|u\|_{g}$. 
\par
We define the bilinear operator $B_{g} : V_{g}(\mathcal{O}) \times V_{g}(\mathcal{O}) \to  V^{*}_{g}(\mathcal{O})$ and the trilinear form $b_{g}: V_{g}(\mathcal{O}) \times V_{g}(\mathcal{O}) \times V_{g}(\mathcal{O}) \to \mathbb{R}$ by 
\[
\langle B_{g}(u, v), w \rangle_{g} = b_{g}(u, v, w) = \sum_{i,j=1}^{2} \int_{\mathcal{O}}u_{i} \frac{\partial{v_{j}}}{\partial{x_{i}}}w_{j} g dx,\quad \forall u,v,w \in V_{g}(\mathcal{O}).
\]
Now, we define the linear operator $R_{g} : V_{g}(\mathcal{O}) \to  V^{*}_{g}(\mathcal{O})$ as follows:
\[
R_{g}u =\left(\dfrac{1}{g}(\nabla g \cdot \nabla)u \right),\quad \forall u \in V_{g}(\mathcal{O}).
\]
By \cite{ref Quyet. D.T}, for $u, v, w$ in the corresponding spaces, we have
\begin{equation}
	\label{eq:2.8}
	\|B_{g}(u,u)\|_{V^{*}}  \le \|u \|_{g} \|u\|_{V_{g}},
\end{equation}
\begin{equation}
	\label{eq:2.3}
	b_{g}(u, v, v) = 0, \quad b_{g}(u, v, w) =- b_{g}(u,w, v),
\end{equation}
\begin{equation}
	\label{eq:2.4}
	|b_{g}(u, v, w)| \le c\|u\|^{\frac{1}{2}}_{g}\|u\|^{\frac{1}{2}}_{V_{g}}\|v\|^{\frac{1}{2}}_{V_{g}}\|A_{g}v\|^{\frac{1}{2}}_{g}\|w\|_{{g}},
\end{equation}
\begin{equation}
	\label{eq:2.5}
	|b_{g}(u, v, w)|\; \le\; c\|u\|^{\frac{1}{2}}_{g}\|A_{g}u\|^{\frac{1}{2}}_{g}\|v\|_{V_{g}}\|w\|_{g}.
\end{equation}
\par
Assume that the external body force  $f(t)$  takes values in $V^{*}_{g}$  for all $t$. Let $Q$ be a positive, symmetric, trace class operator on $H_{g}$.
\begin{defn}
	\label{defn:1.1}
	A stochastic process $\{W (t) : 0 \le t \le T \}$ is said to be an $H_{g}$-valued $\{\mathcal{F}_{t}\}$-adapted Wiener process with covariance operator $Q$ if \\
	(1) for each non-zero $h \in H_{g}$, $|Q^{1/2}h|^{-1}(W(t), h)$ is a standard one-dimensional Wiener process, and \\
	(2) for any $h \in H_{g}$, $(W (t), h)$ is a martingale adapted to $\{\mathcal{F}_{t}\}$.
	
\end{defn}
Let $H_{0,g} = Q^{1/2} H_{g}$. Then $H_{0,g}$ is a Hilbert space with the inner product
\begin{equation}
	\label{eq:1.3}
(u, v)_{0} = (Q^{-1/2}u, Q^{-1/2}v) \quad \forall u, v \in H_{0,g}.
\end{equation}
Let $\| \cdot\|_{0}$ denote the norm in $H_{0,g}$. Because $Q$ is trace class, the embedding $H_{0,g}  \hookrightarrow H_{g}$ is Hilbert--Schmidt.
\par
In the integral formulation of equation \eqref{eq:1.1}, the stochastic forcing term is represented by $\int_{0}^{t}\sigma(r,u^{\varepsilon}(r))dW(r)$. The assumptions on the noise coefficient $\sigma$ are specified below. Let $L_{Q}$ denote the space of bounded linear operators $S:H_g\to H_g$ such that $SQ^{1/2}$ is a Hilbert--Schmidt operator on $H_g$. We equip $L_Q$ with the norm $\|S\|_{L_Q}^2 = \operatorname{tr}(SQS^*)$,
where $S^{*}$ is the adjoint operator of $S$. 
\par
The noise coefficient $\sigma: [0, T] \times V_{g} \to  L_{Q}(H_{0,g}; H_{g})$ is assumed to satisfy the following hypotheses:\\
(\textbf{A1}). The function $\sigma$ belongs to $([0, T ] \times V_{g} ; L_{Q}(H_{0,g}; H_{g}))$.\\
(\textbf{A2}). For all $t \in (0, T)$, there exists a positive constant $K$ such that 
\[
\|\sigma(t, u)\|^{2}_{L_{Q}} \le K (1 +\|u\|_{V_{g}}^{2}).
\]
(\textbf{A3}). For all $t \in (0, T)$, there exists a positive constant $L$ such that for all $u, v \in V_{g}$,
\[
 \| \sigma(t, u) - \sigma(t, v) \|^{2}_{L_{Q}} \le L\|u-v\|^{2}_{V_{g}}.
\]
By applying the $g$-orthogonal projection $P_{g}$ to each term of the $g$-Navier-Stokes system and combining it with the above definitions, we can rewrite equation \eqref{eq:1.100} as follows:
\begin{equation}
	\label{eq:2.6}
	du^{\varepsilon}+(\nu A_{g}u^{\varepsilon} + B_{g}(u^{\varepsilon}, u^{\varepsilon}) + \nu R_{g}u^{\varepsilon})dt = f(t)dt + \sqrt{\varepsilon}\sigma(t,u^{\varepsilon})dW(t),\quad for \,\,  t > s , 
\end{equation}
with initial condition
\begin{equation}
	\label{eq:2.7}
	u(s)=u_{0}.
\end{equation}
\begin{lem}\cite{ref Bae. H}
	\label{lem:3.65}
	Let $u \in L^{2}(0, T;V_{g})$,  then the function $R_{g}u$ defined by
	\begin{equation}
		\label{eq:2.9}
		\begin{aligned}
		(R_{g}u(t), v)_{g} = \left(P_{g}\left(\dfrac{\nabla g}{g} \cdot \nabla \right)u, v\right)_{g} = \left(\left(\dfrac{\nabla g}{g} \cdot \nabla \right)u, v\right)_{g}, \forall \, v\in V_{g}, 
		\end{aligned}
	\end{equation}
	belongs to $L^{2}(0,T; H_{g})$, and therefore also belongs to $L^{2}(0, T; V^{*}_{g})$. Moreover,
	\begin{equation}
		\label{eq:2.10}
		\begin{aligned}
			\|R_{g}u(t)\|_{g} \le  \dfrac{\|\nabla g\|_{\infty}}{m_{0}}\|u\|_{V_{g}},\,\, for\,\, a.e. \,\,t \in (0, T),
		\end{aligned}
	\end{equation}
	and
	\begin{equation}
		\label{eq:2.11}
		\begin{aligned}
			\|R_{g}u(t)\|_{V^{*}} \le  \dfrac{\|\nabla g\|_{\infty}}{m_{0}\lambda^{1/2}_{1}}\|u\|_{V_{g}},\,\, for\,\, a.e. \,\,t \in (0, T).
		\end{aligned}
	\end{equation}
\end{lem}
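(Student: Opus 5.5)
The plan is to bound $R_g u$ pointwise in time using only the bounds on $g$ in \eqref{eq:1.2}, the definition of the norms $\|\cdot\|_g$ and $\|\cdot\|_{V_g}$, and the weighted Poincar\'e inequality \eqref{eq:2.2}. Fix a.e. $t\in(0,T)$ with $u(t)\in V_g$. The function $w:=\frac{1}{g}(\nabla g\cdot\nabla)u(t)$ is a priori only in $\mathbb{L}^2_g(\mathcal{O})$. We set $R_g u(t):=P_g w$, and the second equality in \eqref{eq:2.9} then follows at once. Indeed, $P_g$ is the $g$-orthogonal projection onto $H_g$ and $V_g\subset H_g$, so $(P_g w,v)_g=(w,v)_g$ for every $v\in V_g$.

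For the $H_g$ bound \eqref{eq:2.10}, $P_g$ is an orthogonal projection, so $\|P_g w\|_g\le\|w\|_g$. Pointwise, $|w|\le\frac{|\nabla g|}{g}|Du|\le\frac{\|\nabla g\|_\infty}{m_0}|Du|$, using $g\ge m_0$ and Cauchy--Schwarz on each component $w_j=\sum_i \frac{\partial_i g}{g}\partial_i u_j$. Squaring, multiplying by $g$ and integrating gives
\[
\|w\|_g^2=\int_{\mathcal{O}}|w|^2g\,dx\le\frac{\|\nabla g\|_\infty^2}{m_0^2}\int_{\mathcal{O}}|Du|^2g\,dx=\frac{\|\nabla g\|_\infty^2}{m_0^2}\|u\|_{V_g}^2,
\]
which is \eqref{eq:2.10}. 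Measurability of $t\mapsto R_gu(t)$ follows from linearity and boundedness of $u\mapsto P_g w$ from $V_g$ to $H_g$. Squaring \eqref{eq:2.10} and integrating in time then gives $R_gu\in L^2(0,T;H_g)$.

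For the dual bound \eqref{eq:2.11}, take $v\in V_g$ with $\|v\|_{V_g}=1$. Then $|\langle R_gu,v\rangle|=|(w,v)_g|\le\|w\|_g\|v\|_g$. The weighted Poincar\'e inequality \eqref{eq:2.2}, applied componentwise to $v\in H^1_0$, gives $\|v\|_g\le\lambda_1^{-1/2}\|v\|_{V_g}=\lambda_1^{-1/2}$. Combining this with the previous step yields \eqref{eq:2.11}, and hence $R_gu\in L^2(0,T;V_g^*)$. The same argument also shows the continuous embedding $H_g\hookrightarrow V_g^*$ with constant $\lambda_1^{-1/2}$.

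There is no real obstacle in this argument. The only point needing care is the identification of the $V_g^*$ pairing with the $g$-weighted inner product, that is, using the Gelfand triple $V_g\subset H_g\subset V_g^*$ with pivot $(\cdot,\cdot)_g$. Consistently with that pairing, the constant is $\|\nabla g\|_\infty/m_0$ and not a larger one involving $M_0$.
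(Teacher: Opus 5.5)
Your proposal is correct and follows essentially the same route as the paper: a pointwise Cauchy--Schwarz bound $|\frac{1}{g}(\nabla g\cdot\nabla)u|\le \frac{\|\nabla g\|_\infty}{m_0}|Du|$ integrated against the weight $g$, then the weighted Poincar\'e inequality applied to the test function for the $V_g^*$ bound. The only difference is cosmetic: you bound $\|w\|_g$ directly and use $\|P_g\|\le 1$, whereas the paper tests $(R_gu,v)_g$ against unit $v\in H_g$ and takes the supremum, which gives the same estimate.
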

\begin{proof}
	To prove \eqref{eq:2.10}, by \eqref{eq:2.9} and duality, for any $v \in H_{g}$, 
	\begin{equation}
		\label{eq:3.67}
		\begin{aligned}
		|(R_{g}u,v)_{g}| &= \left|\int_{\mathcal{O}}^{}\sum_{i,j=1}^{2}(\frac{\partial_{i}g }{g}\partial_{i}u_{j}) v_{j}gdx \right|\le \int_{\mathcal{O}}^{}|\frac{\nabla g}{g}||\nabla u| |v| g dx
		\\& \le \frac{\|\nabla g\|_{\infty}}{m_{0}} \int_{\mathcal{O}}^{}|\nabla u| |v| g dx
	 \le \frac{\|\nabla g\|_{\infty}}{m_{0}} \|u\|_{V_{g}} \|v\|_{g}.
		\end{aligned}
	\end{equation}
	By the definition of the norm, $\| R_{g}u(t)\|_{g} = \underset{|v| \le 1}{\sup} |(R_{g}u,v)_{g}| \le \frac{\|\nabla g\|_{\infty}}{m_{0}} \|u\|_{V_{g}}$. Since $u \in L^{2}(0,T; V_{g})$, this estimate implies that $R_{g}u \in L^{2}(0,T; H_{g})$. Similarly, 
	\begin{equation}
		\label{eq:3.68}
		\begin{aligned}
			\| R_{g}u(t)\|_{V^{*}} = \underset{v\in V_{g}, |v| \le 1}{sup} |(R_{g}u,v)_{g}|
			\le \frac{\|\nabla g\|_{\infty}}{m_{0}} \|u\|_{V_{g}} \|v\|_{g}
			\le \frac{\|\nabla g\|_{\infty}}{m_{0} \lambda^{1/2}_{1}} \|u\|_{V_{g}} \|v\|_{V_{g}}. 
		\end{aligned}
	\end{equation}
	The last step of the inequality employs the Poincar\'e inequality. 
	\end{proof}
\par
We first recall the Ladyzhenskaya interpolation inequality on two-dimensional unbounded domains. It will be used to derive a priori estimates for the solution. 
\begin{lem}
	\label{lem:1.2}
	For any real-valued smooth functions $\mathbf{f}$ and $\mathbf{g}$ with compact support in $\mathbb{R}^{2}$, the following hold:\\
	1. $\| \mathbf{f}^{2}\mathbf{g}^{2}\|_{L^{1}} \le \| \mathbf{f}\|_{L^{2}}\| \mathbf{g}\|_{L^{2}}\| \nabla\mathbf{f}\|_{L^{2}}\| \nabla\mathbf{g}\|_{L^{2}}$,\\
	2. $\|\mathbf{f}\|^{4}_{L^{4}} \le 2 \|\mathbf{f}\|^{2}_{L^{2}}\| \nabla\mathbf{f}\|^{2}_{L^{2}}$.
\end{lem}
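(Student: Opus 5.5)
The plan is to prove part 1 first and then obtain part 2 by taking $\mathbf{g}=\mathbf{f}$. In that case $\|\mathbf{f}^2\mathbf{f}^2\|_{L^1}=\|\mathbf{f}\|_{L^4}^4$, and part 1 would give the constant $1$. The claim with constant $2$ therefore follows immediately. Alternatively, part 2 can be checked directly by the same argument with $\mathbf{g}=\mathbf{f}$.

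For part 1, I would use the classical one-dimensional trick of Ladyzhenskaya. Since $\mathbf{f}$ is smooth with compact support, for every $(x_1,x_2)$ we have
\[
\mathbf{f}^2(x_1,x_2)=2\int_{-\infty}^{x_1}\mathbf{f}\,\partial_1\mathbf{f}(\xi,x_2)\,d\xi\le 2\int_{\mathbb{R}}|\mathbf{f}||\partial_1\mathbf{f}|(\xi,x_2)\,d\xi .
\]
The same formula written from $+\infty$ gives the same bound, and averaging the two representations removes the factor $2$. Hence
\[
\mathbf{f}^2(x_1,x_2)\le \int_{\mathbb{R}}|\mathbf{f}||\partial_1\mathbf{f}|(\xi,x_2)\,d\xi=:F_1(x_2).
\]
In the same way, integrating in the second variable,
\[
\mathbf{g}^2(x_1,x_2)\le \int_{\mathbb{R}}|\mathbf{g}||\partial_2\mathbf{g}|(x_1,\eta)\,d\eta=:G_2(x_1).
\]
The point is that $\mathbf{f}^2$ is controlled by a function of $x_2$ alone, and $\mathbf{g}^2$ by a function of $x_1$ alone.

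Multiplying the two bounds and integrating over $\mathbb{R}^2$, Fubini separates the variables:
\[
\int_{\mathbb{R}^2}\mathbf{f}^2\mathbf{g}^2\,dx\le\Big(\int_{\mathbb{R}}F_1(x_2)\,dx_2\Big)\Big(\int_{\mathbb{R}}G_2(x_1)\,dx_1\Big)=\||\mathbf{f}||\partial_1\mathbf{f}|\|_{L^1}\,\||\mathbf{g}||\partial_2\mathbf{g}|\|_{L^1}.
\]
Cauchy--Schwarz on each factor then gives at most $\|\mathbf{f}\|_{L^2}\|\partial_1\mathbf{f}\|_{L^2}\|\mathbf{g}\|_{L^2}\|\partial_2\mathbf{g}\|_{L^2}$. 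Finally, $\|\partial_i\mathbf{f}\|_{L^2}\le\|\nabla\mathbf{f}\|_{L^2}$ yields part 1.

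The main point needing care is the constant. The factor-removing step uses $\mathbf{f}^2\le\frac12\int_{\mathbb{R}}|\partial_1(\mathbf{f}^2)|$, obtained by averaging the integrals from $-\infty$ and from $+\infty$. If one does not bother with this averaging, each pointwise bound carries a factor $2$, and part 1 would come out with constant $4$. So the averaging should be done explicitly in the write-up.

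The statement is on $\mathbb{R}^2$. To use it on the unbounded domain $\mathcal{O}$, one applies it to functions in $C_0^\infty(\mathcal{O})$ extended by zero, and then passes to $H_0^1$ by density. That extension is not needed for the lemma itself.
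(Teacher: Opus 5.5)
Your proof is correct and is essentially the paper's argument. The paper first derives $\|\mathbf{f}\mathbf{g}\|_{L^{1}}\le\frac{1}{4}\|\partial_{1}\mathbf{f}\|_{L^{1}}\|\partial_{2}\mathbf{g}\|_{L^{1}}$ from the same averaged one-dimensional representation and Fubini, then substitutes $\mathbf{f}^{2},\mathbf{g}^{2}$, applies Cauchy--Schwarz, and sets $\mathbf{g}=\mathbf{f}$ for part 2; the only difference is that you square at the start rather than at the end, and you correctly omit the stray weight $g$ that appears inside the paper's one-dimensional integrals.
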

\begin{proof}
	For any $(x, y)$, by the fundamental theorem of calculus, the function can be written as
	\begin{equation}
		\label{eq:1.4}
		\mathbf{f}(x,y) = \int_{-\infty}^{x} \partial_{1}\mathbf{f}(s,y)gds = - \int_{x}^{-\infty} \partial_{1}\mathbf{f}(s,y)gds.
	\end{equation}
	\begin{equation}
		\label{eq:1.5}
		\mathbf{g}(x,y) = \int_{-\infty}^{y} \partial_{2}\mathbf{g}(x,t)gdt = - \int_{y}^{-\infty} \partial_{2}\mathbf{g}(x,t)gdt.
	\end{equation}
	By \eqref{eq:1.4} we have
	\begin{equation}
		\label{eq:1.6}
		|\mathbf{f}(x,y)| = \frac{1}{2} \left|\left[ \int_{-\infty}^{x}\partial_{1}\mathbf{f}(s,y)gds +\int_{x}^{-\infty} -\partial_{1}\mathbf{f}(s,y)gds\right]\right| \le \frac{1}{2} \int_{-\infty}^{\infty} |\partial_{1}\mathbf{f}(s,y)g|ds. 
	\end{equation}
	In the same way 
	\begin{equation}
		\label{eq:1.7}
		|\mathbf{g}(x,y)| \le \frac{1}{2} \int_{-\infty}^{\infty} |\partial_{2}\mathbf{g}(x,t)g|dt.
	\end{equation}
	Using \eqref{eq:1.6} and \eqref{eq:1.7}, and integrating with respect to $x$ and $y$,
	\begin{equation}
		\label{eq:1.8}
		\left|\int_{-\infty}^{\infty}\mathbf{f}(x,y)\mathbf{g}(x,y)dxdy \right|\le \frac{1}{4} \int_{-\infty}^{\infty} |\partial_{1}\mathbf{f}(s,y)g|ds\int_{-\infty}^{\infty} |\partial_{2}\mathbf{g}(x,t)g|dt.
	\end{equation}
	\eqref{eq:1.8} is equivalent to
	\begin{equation}
		\label{eq:1.9}
		\| \mathbf{f}\mathbf{g}\|_{L^{1}} \le \frac{1}{4} \|\partial_{1}\mathbf{f}\|_{L^{1}}\|\partial_{2}\mathbf{g}\|_{L^{1}}.
	\end{equation} 
	Replacing $\mathbf{f}$ and $\mathbf{g}$ by $\mathbf{f}^{2}$  and $\mathbf{g}^{2}$ , respectively, it yields 
	\begin{equation}
		\label{eq:1.10}
		\| \mathbf{f}^{2}\mathbf{g}^{2}\|_{L^{1}} \le  \|\mathbf{f}\partial_{1}\mathbf{f}\|_{L^{1}}\|\mathbf{g}\partial_{2}\mathbf{g}\|_{L^{1}}.
	\end{equation}
	Using the Cauchy-Schwarz inequality twice gives
	\begin{equation}
		\label{eq:1.11}
		\begin{aligned}
		\| \mathbf{f}^{2}\mathbf{g}^{2}\|_{L^{1}} &\le  \|\mathbf{f}\|_{L^{2}}\|\mathbf{g}\|_{L^{2}}\|\partial_{1}\mathbf{f}\|_{L^{2}}\|\partial_{2}\mathbf{g}\|_{L^{2}}
		\\&\le \|\mathbf{f}\|_{L^{2}}\|\mathbf{g}\|_{L^{2}}\|\nabla\mathbf{f}\|_{L^{2}}\|\nabla\mathbf{g}\|_{L^{2}}.
	\end{aligned}
	\end{equation}
	Therefore, the proof of the first inequality is completed. Setting $\mathbf{f} =\mathbf{g}$ in \eqref{eq:1.11}, and using the Cauchy-Schwarz inequality, it yields
\begin{equation}
	\label{eq:1.12}
	\|\mathbf{f}\|^{4}_{L^{4}} \le 2 \|\mathbf{f}\|^{2}_{L^{2}}\| \nabla\mathbf{f}\|^{2}_{L^{2}}.
\end{equation}
\end{proof}
\begin{rem}
	\label{rem:1.3}
	From Lemma \ref{lem:1.2}, it follows that for any $u \in H_{g}$,
	\begin{equation}
		\label{eq:1.16}
	\|u \|^{4}_{L^{4}} \, \le \,  \frac{2M_{0}}{m^{2}_{0}}\, \|u\|^{2}_{{g}} \,\|u \|^{2}_{V_{g}}.
	\end{equation}
If $u = (u_{1}, u_{2})$, then $\| u\|^{2}_{g} $ $= \sum_{i=1}^{2}(u_{i},u_{i})_{g}$ and $\| u\|^{2}_{V_{g}}$ $= \sum_{i,j=1}^{2} \int_{\mathcal{O}}^{}\frac{\partial{u_{j}}}{\partial{x_{i}}}\frac{\partial{v_{j}}}{\partial{x_{i}}}gdx$. The bound given by \eqref{eq:1.16} holds for both bounded and  unbounded domains.
\end{rem}

\begin{prop}
	\label{prop:1.4}
	For a given $r > 0$, let $B_{r}$ denote the $L^{4}(\mathcal{O})$ ball in $V_{g} : B_{r} = \{v \in V_{g} :\|v\|_{L^{4}(\mathcal{O})} \le r \}$. Define the nonlinear operator $F$ on $V_{g}$ by $F(u) :=-\nu A_{g}u - B_{g}(u,u)-\nu R_{g}u$. Then for  any $0 < \varepsilon \le \frac{9 \nu }{16L}$, where $L$ is the constant that appears in the condition (\textbf{A3}), the pair $(F, \sqrt{\varepsilon}\sigma)$  is monotone in $B_{r}$ : i.e. for any $u \in  V_{g}$ and $v \in B_{r}$, then
	\begin{equation}
		\label{eq:1.13}
		(F(u)-F(v),u-v)_{g} - \frac{r^{4}}{\nu^{3}}\|u-v\|^{2}_{g} + \varepsilon\|\sigma(t,u) -\sigma(t,v)\|^{2}_{L_{Q}} \le 0. 
	\end{equation}
\end{prop}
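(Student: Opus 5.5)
Set $w=u-v$ and split $(F(u)-F(v),w)_g$ into its linear and nonlinear parts; each part is then compared with the dissipation $\nu\|w\|^2_{V_g}$.

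\textbf{Linear part.} First, $-\nu(A_g w,w)_g=-\nu\|w\|^2_{V_g}$.

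Second, by the bound \eqref{eq:2.10} of Lemma \ref{lem:3.65} and the weighted Poincar\'e inequality \eqref{eq:2.2},
\[
|\nu(R_gw,w)_g|\le \nu\frac{\|\nabla g\|_\infty}{m_0}\|w\|_{V_g}\|w\|_g\le \nu\frac{\|\nabla g\|_\infty}{m_0\lambda_1^{1/2}}\|w\|^2_{V_g}\le \frac{\nu}{16}\|w\|^2_{V_g},
\]
where the last step uses \eqref{eq:1.2}.

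Third, assumption (\textbf{A3}) gives $\varepsilon\|\sigma(t,u)-\sigma(t,v)\|^2_{L_Q}\le \varepsilon L\|w\|^2_{V_g}\le \frac{9\nu}{16}\|w\|^2_{V_g}$, using $\varepsilon\le \frac{9\nu}{16L}$.

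Together these terms consume $\frac{10}{16}\nu$ of the dissipation. This leaves $\frac{6}{16}\nu\|w\|^2_{V_g}$ for the nonlinear term.

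\textbf{Nonlinear part.} Write $B_g(u,u)-B_g(v,v)=B_g(w,w)+B_g(v,w)+B_g(w,v)$. By \eqref{eq:2.3} the first two terms vanish when tested against $w$. What remains is
\[
-(B_g(u,u)-B_g(v,v),w)_g=-b_g(w,v,w)=b_g(w,w,v).
\]
Apply H\"older with $g\le M_0$:
\[
|b_g(w,w,v)|\le M_0\|w\|_{L^4}\|\nabla w\|_{L^2}\|v\|_{L^4}\le C\|w\|_{L^4}\|w\|_{V_g}\, r .
\]
Next use the Ladyzhenskaya bound \eqref{eq:1.16}, namely $\|w\|_{L^4}\le C\|w\|_g^{1/2}\|w\|_{V_g}^{1/2}$. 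This gives $|b_g(w,w,v)|\le C r\|w\|_g^{1/2}\|w\|_{V_g}^{3/2}$.

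Young's inequality with exponents $4/3$ and $4$ then yields
\[
|b_g(w,w,v)|\le \tfrac{6}{16}\nu\|w\|^2_{V_g}+C' \frac{r^4}{\nu^3}\|w\|^2_g .
\]
The factor $\nu^{-3}$ comes out automatically from this Young split.

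\textbf{Conclusion and main obstacle.} Summing all the pieces, the left side of \eqref{eq:1.13} is at most $(C'-1)\frac{r^4}{\nu^3}\|w\|^2_g$. The main obstacle is making the constant satisfy $C'\le 1$. The combination $M_0$, $m_0^{-1}$, the factor $2$ in \eqref{eq:1.16}, and the Young constant must all be absorbed. I would first try the sharp form of the Young inequality, $ab\le \frac34\delta a^{4/3}+\frac{1}{4\delta^3}b^4$, and track the constants carefully.

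If this does not give $C'\le1$, there are two fallbacks:
\begin{itemize}
\item interpret the $L^4$ ball with the weighted measure $g\,dx$, so that the factor $M_0$ is absorbed into $r$;
\item or allow a constant depending on $m_0,M_0$ in front of $r^4/\nu^3$, which is harmless for the monotonicity argument.
\end{itemize}
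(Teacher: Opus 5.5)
Your route is the paper's route. You use the same splitting, the same bound $\nu|(R_gw,w)_g|\le\frac{\nu}{16}\|w\|_{V_g}^2$ from \eqref{eq:2.10}, \eqref{eq:2.2}, \eqref{eq:1.2}, the same reduction of the nonlinearity to $b_g(w,w,v)$, H\"older plus \eqref{eq:1.16} plus a $\frac43$--$4$ Young split, and (\textbf{A3}) with $\varepsilon\le\frac{9\nu}{16L}$. Each of these steps is correct. However, the step you flag as the main obstacle is a genuine gap, and your primary plan for it cannot work.

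Because $\varepsilon L$ may equal $\frac{9\nu}{16}$, only $\frac{3\nu}{8}\|w\|_{V_g}^2$ is left for the nonlinear term. With that budget, the sharp Young inequality turns $|b_g(w,w,v)|\le Cr\|w\|_g^{1/2}\|w\|_{V_g}^{3/2}$ into $\frac{3\nu}{8}\|w\|_{V_g}^2+2C^4\frac{r^4}{\nu^3}\|w\|_g^2$, and $2C^4$ is the best coefficient possible for that budget. So you need $2C^4\le 1$. Already for $g\equiv 1$, your chain (H\"older, then \eqref{eq:1.16}, i.e.\ the factor $2$ of Lemma~\ref{lem:1.2}) gives $C^4=2$, hence coefficient $4$. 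Tracking constants more carefully in the Young step cannot change this.

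Neither fallback rescues the statement as written:
\begin{enumerate}
\item An $L^4$ ball in the measure $g\,dx$ does not help, since the obstruction is already present for constant $g$.
\item Allowing a constant closes the argument but proves a weaker statement: \eqref{eq:1.13} with $\frac{r^4}{\nu^3}$ replaced by $C_0\frac{r^4}{\nu^3}$. For example, $C_0=4M_0^2/m_0^2$ if you apply H\"older in the measure $g\,dx$.
\end{enumerate}

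If you want the constant $1$, the gain must come from the interpolation step, not the Young step. Write $|b_g(w,w,v)|\le\|g^{1/2}w\|_{L^4}\|w\|_{V_g}\|v\|_{L^4}$ and apply Ladyzhenskaya to $g^{1/2}w$. By \eqref{eq:1.2} and \eqref{eq:2.2}, its gradient has $L^2$ norm at most $\frac{33}{32}\|w\|_{V_g}$. This reduces matters to needing a constant $c\le\frac12(\frac{32}{33})^2$ in $\|f\|_{L^4}^4\le c\|f\|_{L^2}^2\|\nabla f\|_{L^2}^2$. Lemma~\ref{lem:1.2} does not supply such a $c$, but the sharp Gagliardo--Nirenberg constant (about $0.17$) does.

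The paper does not get around this either. In \eqref{eq:1.17} it drops the interpolation and weight constants, bounding $|b_g(w,w,v)|$ by $\|w\|_g^{1/2}\|w\|_{V_g}^{3/2}\|v\|_{L^4}$ with constant $1$. It then assigns $\frac{\nu}{2}$ to the nonlinear term to get $\frac{27}{32\nu^3}$. But $-\nu+\frac{\nu}{2}+\frac{\nu}{16}=-\frac{7\nu}{16}$, not the $-\frac{9\nu}{16}$ written in \eqref{eq:1.19}. So after adding $\varepsilon L\|w\|_{V_g}^2\le\frac{9\nu}{16}\|w\|_{V_g}^2$, a positive $\frac{\nu}{8}\|w\|_{V_g}^2$ remains. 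Your allocation, with $\frac{6}{16}\nu$ left for the nonlinearity, is the consistent one.

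In the proof of Theorem~\ref{thm:1.8}, the constant in front of $\|v\|_{L^4}^4/\nu^3$ is simply absorbed into the exponent $r(t)$, which the paper already takes with a factor $27$. So the version with $C_0(m_0,M_0)$ is all that is needed downstream. The honest fix is to state and prove the proposition in that form, as your second fallback does, rather than to claim constant $1$ on the strength of a sharper Young step.
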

\begin{proof}
	We first note that
	\begin{equation}
		\label{eq:1.14}
		\begin{aligned}
		(F(u)-F(v),u-v)_{g} &=-\nu(A_{g}(u-v),u-v)_{g} - (B_{g}(u,u)-B_{g}(v,v), u-v)_{g} 
		\\& - \nu(R_{g}(u-v), u-v)_{g}.
		\end{aligned}
	\end{equation}
	By the definition of operator $A_{g}$, we have $(A_{g}u,u)_{g} = \|u\|^{2}_{V_{g}}$. Using \eqref{eq:2.3} and the bilinearity of the operator $B_{g}$, it follows that $(B_{g}(u,u), u-v )_{g} = -(B(u,u-v),v)_{g}$ and $(B_{g}(v,v), u-v )_{g} = -(B(v,u-v),v)_{g}$. Hence,
	\begin{equation}
		\label{eq:1.15}
		(B_{g}(u,u)-B_{g}(v,v), u-v )_{g} = -(B_{g}(u-v,u-v),v)_{g}.
	\end{equation}
	By \eqref{eq:1.16} and H\"older inequality, we obtain
	\begin{equation}
		\label{eq:1.17}
		\begin{aligned}
			|(B_{g}(u,u)-B_{g}(v,v), u-v )_{g}| &\le \|u-v\|_{L^{4}(\mathcal{O})}\|u-v\|_{g}\|v\|_{L^{4}(\mathcal{O})}
			\\& \le  \|u-v\|^{1/2}_{g} \|u-v\|^{3/2}_{V_{g}} \|v\|_{L^{4}(\mathcal{O})}
			\\& \le \dfrac{\nu}{2} \| u-v\|^{2}_{V_{g}} + \dfrac{27}{32\nu^{3}}\| u-v\|^{2}_{g} \|v\|^{4}_{L^{4}(\mathcal{O})}.
		\end{aligned}
	\end{equation}
	For the last term on the right-hand side of equation \eqref{eq:1.14}, by \eqref{eq:1.2} and Poincar\'e inequality, we obtain
	\begin{equation}
		\label{eq:1.18}
		\begin{aligned}
			|(R_{g}u-R_{g}v, u-v)_{g}| &= |(P_{g} (\frac{1}{g}(\nabla g \cdot \nabla)(u-v), u-v )_{g}| 
			\\& \le \dfrac{\| \nabla g\|_{\infty}}{m_{0}\lambda^{1/2}_{1}}\|u-v\|^{2}_{V_{g}}  
			 \le \dfrac{1}{16}\|u-v\|^{2}_{V_{g}}.
		\end{aligned}
	\end{equation}
	By \eqref{eq:1.17}, \eqref{eq:1.18}, and the definition of the operator $F$,  we get
	\begin{equation}
		\label{eq:1.19}
		(F(u)-F(v),u-v)_{g} \le -\frac{9 \nu}{16} \|u-v\|^{2}_{V_{g}} + \dfrac{27r^{4}}{32\nu^{3}}\| u-v\|^{2}_{g},
	\end{equation}
	since $\| v \|_{L^{4}(\mathcal{O})} \le r$ . Combining this estimate with condition (\textbf{A3}) and $\varepsilon \le \frac{9 \nu}{16L}$ completes the proof.
\end{proof}
Let $\{e_j\}_{j\geq 1}\subset D(A_g)$ be a fixed orthonormal basis of $H_g$, where
\[
D(A_g)=\{u\in H_g:-P_g\Delta u\in H_g\}.
\]
For $n\in\mathbb{N}$, set
\[
H_n:=\operatorname{span}\{e_1,e_2,\ldots,e_n\},
\]
and let $P_n:H_g\to H_n$ denote the orthogonal projection onto $H_n$.
Define
\[
W_n:=P_nW,
\qquad
\sigma_n:=P_n\sigma.
\]
We define the Galerkin approximation $u_n^\varepsilon$ as the solution of the following finite-dimensional stochastic differential equation: for every $v\in H_n$,
	\begin{equation}
		\label{eq:1.20}
		\begin{aligned}
		d(u^{\varepsilon}_{n}(t),v)_{g} = ((f(t),v)_{g} + (F(u^{\varepsilon}_{n}(t)),v)_{g} )dt +\sqrt{\varepsilon}(\sigma(t,u^{\varepsilon}_{n}(t))dW_{n}(t), v)_{g}, 
		\end{aligned}
	\end{equation}
    with initial condition
     \begin{equation}
     		u_n^\varepsilon(0)=P_{n}u_{0}.
     \end{equation}
	Next, we need to provide some priori estimates regarding the solutions of the equation.
	\begin{prop}
	\label{prop:1.22} 
	Suppose (\textbf{A1})--(\textbf{A3}) hold, let $u^{\varepsilon}_{n}$ denote the unique strong solution of finite-dimensional system \eqref{eq:1.20} in $C([0, T]; H_{n})$. Then, for every $T > 0$, $N > 0$, $ 0 < \varepsilon \le \varepsilon_{0}$ and $\mathbf{E} \|u(0)\|^{2}_{g} < \infty$, the following estimates hold:
	\\
	\\
	1. If $f \in L^{2}(0,T; V^{*})$ and  $\varepsilon_{0} = \min\{\frac{3\nu}{4K}, \frac{7\nu}{8K(1+2c^{2}_{2})} \}$, then, for all $t \in [0,T]$,
	\begin{equation}
		\label{eq:4.38}
		\begin{aligned}
		\mathbf{E}(\| u^{\varepsilon}_{n}(t) \|^{2}_{g}) + \nu\int_{0}^{t}\mathbf{E}\| u^{\varepsilon}_{n}(s) \|^{2}_{V_{g}}ds \le C_{1}\left( \mathbf{E}\| u(0)\|^{2}_{g} + \mathbf{E}\int_{0}^{t}\| f(s)\|^{2}_{V^{*}}ds + 1 \right),
		\end{aligned}
	\end{equation}
	and 
	\begin{equation}
		\label{eq:4.39}
		\begin{aligned}
		\mathbf{E}\left[\underset{0 \le t \le T}{\sup}\left(\|u^{\varepsilon}_{n}(t)\|^{2}_{g} + \nu\int_{0}^{t} \| u^{\varepsilon}_{n}(s)\|^{2}_{V_{g}}ds \right)\right] \le C_{1} \left(\mathbf{E}\|u(0\|^{2}_{g} + \mathbf{E}\int_{0}^{T}\| f(s)\|^{2}_{V^{*}}ds +1 \right),
		\end{aligned}
	\end{equation}
	where $C_{1} = C_{1}(\varepsilon_{0}, \nu , T)$ is a constant independent of $\varepsilon$, $u_{0}$ and $N$. 
	\\
	\\
	2. If $f \in L^{2}(0,T; V^{*})$, let $\delta > 0$ and $\varepsilon_{0} = \frac{7\nu}{8K}$, then for all $t \in [0,T]$,
	\begin{equation}
		\label{eq:1.70}
		\begin{aligned}
			\mathbf{E}e^{-\delta t}\| u^{\varepsilon}_{n}(t)\|^{2}_{g}  + \nu\mathbf{E}\int_{0}^{t}e^{-\delta s}\|u^{\varepsilon}_{n}(s)\|^{2}_{V_{g}} ds \le C_{2}\left( \mathbf{E}\|u(0) \|^{2}_{g} + \mathbf{E}\int_{0}^{t}e^{-\delta s} \| f(s)\|^{2}_{V^{*}}ds + 1 \right),
		\end{aligned}
	\end{equation}
	where $C_{2} = C_{2}(\varepsilon_{0}, \nu , T) $ is a constant independent of $\varepsilon$, $u_{0}$ and $N$. 
	\\
	\\
	3. If $f \in L^{4}([0,T]; V^{*})$, $\mathbf{E}\|u(0)\|^{4}_{g} < \infty$, let $\delta > 0$ and $\varepsilon_{0} = \min\{\frac{\nu}{8K}, \frac{11\nu}{8K(3+c^{2}_{2})}\}$, then for all $t \in [0,T]$,
	\begin{equation}
		\label{eq:1.75}
		\begin{aligned}
			\mathbf{E}e^{-2\delta t}\| u^{\varepsilon}_{n}(t)\|^{4}_{g}  + 3\nu \mathbf{E} \int_{0}^{t}e^{-2\delta s} \| u^{\varepsilon}_{n}(s) \|^{2}_{V_{g}}\| u^{\varepsilon}_{n}(s) \|^{2}_{g} 	ds 
			\le C_{3}\left( \mathbf{E}\| u(0) \|^{4}_{g} + \mathbf{E} \int_{0}^{t} e^{-2\delta s}\| f(s) \|^{4}_{V^{*}}  ds + 1 \right),
		\end{aligned}
	\end{equation}
	and
	\begin{equation}	
		\label{eq:1.79}
		\begin{aligned}
			&\textbf{E}\left[ \underset{0 \le t \le T}{\sup}\left(  e^{-2\delta t} \|u^{\varepsilon}_{n}(s)\|^{4}_{g}  +  \nu \textbf{E}\int_{0}^{t}e^{-2\delta t}\| u^{\varepsilon}_{n}(t)\|^{2}_{V_{g}}\|u^{\varepsilon}_{n}(t)\|^{2}_{g} dt \right) \right] 
			\\& \le C_{4}\left( \textbf{E}\|u^{\varepsilon}_{n}(0)\|^{4}_{g} +  \textbf{E}\int_{0}^{T} e^{-2\delta t}\|f(s)\|^{4}_{V^{*}} ds + 1 \right), 
		\end{aligned}
	\end{equation}
	where $C_{3}$ and $C_{4}$ are positive constants independent of $\varepsilon$, $u_{0}$ and $N$.
	\end{prop}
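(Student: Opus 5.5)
The plan is to apply the finite-dimensional It\^o formula to $\|u^{\varepsilon}_{n}(t)\|^{2}_{g}$, and for part 3 to $\|u^{\varepsilon}_{n}(t)\|^{4}_{g}$, in the Galerkin system \eqref{eq:1.20}. Since $u^{\varepsilon}_{n}$ lives in the finite-dimensional space $H_{n}$ and the coefficients are locally Lipschitz, It\^o's formula applies directly. It gives
\[
d\|u^{\varepsilon}_{n}\|^{2}_{g} = 2(f+F(u^{\varepsilon}_{n}),u^{\varepsilon}_{n})_{g}\,dt + \varepsilon\|P_{n}\sigma(t,u^{\varepsilon}_{n})\|^{2}_{L_{Q}}\,dt + 2\sqrt{\varepsilon}(\sigma(t,u^{\varepsilon}_{n})\,dW_{n},u^{\varepsilon}_{n})_{g}.
\]

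\textbf{Bounding the drift and noise terms.}
\begin{itemize}
\item The nonlinear term vanishes: $b_{g}(u,u,u)=0$ by \eqref{eq:2.3}.
\item The Stokes term gives $-2\nu\|u^{\varepsilon}_{n}\|^{2}_{V_{g}}$.
\item For the $R_{g}$ term, \eqref{eq:2.11} and \eqref{eq:1.2} give $2\nu|(R_{g}u,u)_{g}|\le \frac{\nu}{8}\|u\|^{2}_{V_{g}}$, as in \eqref{eq:1.18}.
\item The forcing term is handled by Young: $2|\langle f,u\rangle|\le \frac{\nu}{2}\|u\|^{2}_{V_{g}}+\frac{2}{\nu}\|f\|^{2}_{V^{*}}$.
\item The It\^o correction is bounded by (\textbf{A2}): $\varepsilon K(1+\|u\|^{2}_{V_{g}})$.
\end{itemize}
With $\varepsilon\le\varepsilon_{0}$ chosen as in the statement, $\varepsilon K$ is absorbed and at least $\nu\|u\|^{2}_{V_{g}}$ of dissipation survives. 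Taking expectations kills the stochastic integral. To justify this, localize with the stopping times $\tau_{N}=\inf\{t:\|u^{\varepsilon}_{n}(t)\|_{g}>N\}$ and then let $N\to\infty$ by Fatou; this is why the constants must be independent of $N$. This yields \eqref{eq:4.38} with $C_{1}$ depending only on $\nu,T,\varepsilon_{0}$.

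\textbf{Supremum bound \eqref{eq:4.39}.} Before taking expectations, take the supremum over $[0,T]$ and control the martingale term by the Burkholder--Davis--Gundy inequality:
\[
\mathbf{E}\sup_{t}\Big|\int_{0}^{t}(\sigma\,dW_{n},u)_{g}\Big|\le c_{2}\,\mathbf{E}\Big(\int_{0}^{T}\|\sigma\|^{2}_{L_{Q}}\|u\|^{2}_{g}\,ds\Big)^{1/2}\le \tfrac12\mathbf{E}\sup_{t}\|u\|^{2}_{g}+ c_{2}^{2}\varepsilon K\,\mathbf{E}\int_{0}^{T}(1+\|u\|^{2}_{V_{g}})\,ds.
\]
The $\frac12\mathbf{E}\sup\|u\|^{2}_{g}$ is absorbed on the left. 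The remaining integral is controlled by \eqref{eq:4.38}, or absorbed directly into the dissipation, which is where the $1+2c_{2}^{2}$ in $\varepsilon_{0}$ comes from.

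\textbf{Weighted estimate \eqref{eq:1.70}.} Apply It\^o to $e^{-\delta t}\|u\|^{2}_{g}$. This adds the nonpositive term $-\delta e^{-\delta t}\|u\|^{2}_{g}$, which can be dropped, and otherwise repeats the same computation with the weight $e^{-\delta s}$ inside the integrals.

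\textbf{Fourth moments \eqref{eq:1.75}, \eqref{eq:1.79}.} Apply It\^o to $\phi(u)=e^{-2\delta t}(\|u\|^{2}_{g})^{2}$. The differential is $2\|u\|^{2}_{g}\,d\|u\|^{2}_{g}+d\langle\|u\|^{2}_{g}\rangle$, and the quadratic variation term is bounded by $4\varepsilon\|\sigma\|^{2}_{L_{Q}}\|u\|^{2}_{g}\le 4\varepsilon K(1+\|u\|^{2}_{V_{g}})\|u\|^{2}_{g}$. The drift now produces $-c\nu\|u\|^{2}_{V_{g}}\|u\|^{2}_{g}$. The forcing term is handled by $\|u\|^{2}_{g}\|f\|_{V^{*}}\|u\|_{V_{g}}\le \eta\|u\|^{2}_{g}\|u\|^{2}_{V_{g}}+C_{\eta}\|u\|^{4}_{g}+C_{\eta}\|f\|^{4}_{V^{*}}$, after a further Young step to split $\|u\|^{2}_{g}\|f\|^{2}$. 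The lower-order terms $\|u\|^{2}_{g}$ and $\|u\|^{4}_{g}$ are controlled either by the $-2\delta$ weight (choosing $\delta$ large) or by Gronwall. For the supremum, apply BDG to $\int\|u\|^{2}_{g}(\sigma\,dW,u)_{g}$, bound it by $\frac12\mathbf{E}\sup\|u\|^{4}_{g}+C\varepsilon K\,\mathbf{E}\int(1+\|u\|^{2}_{V_{g}})\|u\|^{2}_{g}$, and absorb.

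\textbf{Main obstacle.} The hard part is the careful bookkeeping of constants. In each case the $\varepsilon K$-weighted $\|u\|^{2}_{V_{g}}$ contributions, coming from the It\^o correction and from BDG (note that (\textbf{A2}) grows in the $V_{g}$ norm, not in $H_{g}$), must be dominated by the available dissipation $\nu-\frac{\nu}{8}-\frac{\nu}{2}$. That requirement is exactly what forces the stated thresholds $\varepsilon_{0}$. I would fix the Young parameters first, so that the leftover dissipation is $\frac{3\nu}{8}$ or $\frac{7\nu}{8}$, and then read off $\varepsilon_{0}$.
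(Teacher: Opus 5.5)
Your proposal is correct and follows essentially the same route as the paper. Both use It\^o's formula on the Galerkin system for $\|u^{\varepsilon}_{n}\|_{g}^{2}$, $e^{-\delta t}\|u^{\varepsilon}_{n}\|_{g}^{2}$ and $e^{-2\delta t}\|u^{\varepsilon}_{n}\|_{g}^{4}$, together with the bound $2\nu|(R_{g}u,u)_{g}|\le\frac{\nu}{8}\|u\|_{V_{g}}^{2}$, (\textbf{A2}) for the It\^o corrections, stopping-time localization with Fatou, and BDG with absorption of $\frac{1}{2}\mathbf{E}\sup$ for the supremum bounds.

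The one real difference is the forcing term. Your pairing $\langle f,u\rangle\le\|f\|_{V^{*}}\|u\|_{V_{g}}$ is more careful than the paper's, which pairs $\|f\|_{V^{*}}$ with $\|u\|_{g}$ in parts 2 and 3. The leftover $C\|u\|_{g}^{2}\|f\|_{V^{*}}^{2}$ is absorbed by the $2\delta e^{-2\delta t}\|u\|_{g}^{4}$ term via Young for every $\delta>0$, so you need neither large $\delta$ nor Gronwall there.
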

	\begin{proof} Proof of \eqref{eq:4.38} and \eqref{eq:4.39}. By \eqref{eq:2.6} and It\^o formula, we can get 
	\begin{equation}
		\label{eq:4.50}
		\begin{aligned}
			d\| u^{\varepsilon}_{n}(t) \|^{2}_{g} &+ 2(\nu A_{g}u^{\varepsilon}_{n}(s) + B_{g}(u^{\varepsilon}_{n}(s), u^{\varepsilon}_{n}(s)) + \nu R_{g}u^{\varepsilon}_{n}(s) , u^{\varepsilon}_{n}(s) )_{g}dt = 2(f(s), u^{\varepsilon}_{n}(t))_{g}dt 
			\\& + \varepsilon tr(\sigma(u^{\varepsilon}_{n}(t))Q\sigma^{*}(u^{\varepsilon}_{n}(t)))dt + 2\sqrt{\varepsilon}(\sigma(u^{\varepsilon}_{n}(t))dW_{n}(t), u^{\varepsilon}_{n}(t) )_{g}.
		\end{aligned}
	\end{equation}
	By the definitions of the $A_{g}$ and $B_{g}$, we have
	\begin{equation}
		\label{eq:4.51}
		\begin{aligned}
			& d\| u^{\varepsilon}_{n}(t) \|^{2}_{g} + 2\nu\|u^{\varepsilon}_{n}(t)\|^{2}_{V_{g}} dt +  2(\nu R_{g}u^{\varepsilon}_{n}(t) , u^{\varepsilon}_{n}(t) )_{g}dt \\& = 2(f(s), u^{\varepsilon}_{n}(t))_{g}dt 
		+ \varepsilon tr(\sigma(u^{\varepsilon}_{n}(t))Q\sigma^{*}(u^{\varepsilon}_{n}(t)))dt + 2\sqrt{\varepsilon}(\sigma(u^{\varepsilon}_{n}(t))dW_{n}(t), u^{\varepsilon}_{n}(t) )_{g}.
		\end{aligned}
	\end{equation}
	For the last term on the left-hand side of \eqref{eq:4.51}, by Lemma \ref{lem:3.65} and \eqref{eq:1.2}, we have  
	\begin{equation}
		\label{eq:4.52}
		\begin{aligned}
		2\nu|( R_{g}u^{\varepsilon}_{n}(s) , u^{\varepsilon}_{n}(s) )_{g}dt	| \le 2\nu  \frac{\|\nabla g\|_{\infty}}{m_{0}\lambda^{1/2}_{1}} \|u^{\varepsilon}_{n}(t)\|^{2}_{V_{g}}dt \le \frac{\nu }{8} \|u^{\varepsilon}_{n}(t)\|^{2}_{V_{g}}dt. 
		\end{aligned}
	\end{equation}
		For the first term on the right-hand side of \eqref{eq:4.51}, by Young's inequality, we have
	\begin{equation}
		\label{eq:4.53}
		\begin{aligned}
	2|(f(s), u^{\varepsilon}_{n}(t))_{g}dt| \le  \frac{8}{\nu}\|f(t)\|^{2}_{V^{*}}dt + \frac{ \nu}{8} \|u^{\varepsilon}_{n}(t)\|^{2}_{V_{g}}dt. 
		\end{aligned}
	\end{equation}
		Define a sequence of stopping times as follows
	\[
	\tau_{N} = \inf \left\{t: \|u^{\varepsilon}_{n}(t)\|^{2}_{g} + \int_{0}^{t}\| u^{\varepsilon}_{n}(s)\|^{2}_{V_{g}} \ge N \right\}. 
	\]
	Combining \eqref{eq:4.52} - \eqref{eq:4.53} with  condition (\textbf{A2}), we get that for any $t \in [0,T]$,
	\begin{equation}
		\label{eq:4.54}
		\begin{aligned}
			\| u^{\varepsilon}_{n}(t \wedge \tau_{N}) \|^{2}_{g} + \nu \int_{0}^{t \wedge \tau_{N}}\|u^{\varepsilon}_{n}(s)\|^{2}_{V_{g}} ds 
			& \le \|u^{\varepsilon}_{n}(0)\|^{2}_{g} + \frac{8}{\nu}  \int_{0}^{t \wedge \tau_{N}} \|f(s)\|^{2}_{V^{*}}ds 
			 \\& + \varepsilon K \int_{0}^{t \wedge \tau_{N}} \left(1+ \|u^{\varepsilon}_{n}(s)\|^{2}_{V_{g}} \right)ds.
		\end{aligned}
	\end{equation}
	Taking the expectation for \eqref{eq:4.54}, and $\varepsilon < \frac{3\nu}{4K} $, we have
	\begin{equation}
		\label{eq:4.56}
		\begin{aligned}
			\mathbf{E}\| u^{\varepsilon}_{n}(t \wedge \tau_{N}) \|^{2}_{g} + \nu\int_{0}^{t \wedge \tau_{N}}\mathbf{E}\| u^{\varepsilon}_{n}(s) \|^{2}_{V_{g}}ds 
			\le C_{1}\left(\mathbf{E}\| u(0)\|^{2}_{g} + \mathbf{E}\int_{0}^{t \wedge \tau_{N}}\| f(s)\|^{2}_{V^{*}}ds + 1 \right), 
		\end{aligned}
	\end{equation}
	where $C_{1} > 0$  is independent of $\varepsilon$, $u_{0}$ and $\|f(t)\|_{V^{*}}$. 
	\par 
	Taking the supremum and expectation in \eqref{eq:4.51} and using \eqref{eq:4.54}, we have that for $t\in [0, T\wedge \tau_{N}]$, 
	\begin{equation}
		\label{eq:1.63}
		\begin{aligned}
			&\textbf{E}\underset{0 \le t \le T \wedge \tau_{N}}{\sup} \left(\|u^{\varepsilon}_{n}(t) \|^{2}_{g}  + \nu \int_{0}^{t} \|u^{\varepsilon}_{n}(s)\|^{2}_{V_{g}}ds \right) 
			\\& \le \textbf{E}\|u(0)\|^{2}_{g} + \varepsilon K \textbf{E}\int_{0}^{t \wedge \tau_{N}} \left(1+ \|u^{\varepsilon}_{n}(s)\|^{2}_{V_{g}} \right)ds 
			\\& + \frac{8}{\nu}\textbf{E}\int_{0}^{T \wedge \tau_{N}} \| f(s)\|^{2}_{V^{*}}ds + 2\sqrt{\varepsilon} \,\textbf{E} \underset{0\le t \le T \wedge \tau_{N}}{\sup} \left|\int_{ 0}^{t } (\sigma_{n}(s, u^{\varepsilon}_{n}(s) )dW_{n}(s), u^{\varepsilon}_{n}(s))_{g} \right|.
		\end{aligned}
	\end{equation}
	For the last term on the right-hand side of \eqref{eq:1.63}, by using condition (\textbf{A2}) and Burkholder-Davis-Gundy inequality, we have 
	\begin{equation}
		\label{eq:1.64}
		\begin{aligned}
			&2\sqrt{\varepsilon} \,\textbf{E}\left[ \underset{0\le t \le T \wedge \tau_{N}}{\sup} \left|\int_{ 0}^{t } (\sigma_{n}(s, u^{\varepsilon}_{n}(s) )dW_{n}(s), u^{\varepsilon}_{n}(s))_{g} \right|  \right]
			\\& \le 2\sqrt{\varepsilon} c_{2} \textbf{E} \left[  \left(\int_{0}^{T \wedge \tau_{N}} \|\sigma_{n}(s, u^{\varepsilon}_{n}(s)\|^{2}_{L_{Q}}\|u^{\varepsilon}_{n}(s)\|^{2}_{g}ds \right)^{1/2} \right]
			\\& \le  \frac{1}{2}   \textbf{E}\left[ \underset{0 \le t \le T \wedge \tau_{N} }{\sup} \|u^{\varepsilon}_{n}(t) \|^{2}_{g} \right]  + 2 \varepsilon c^{2}_{2}  \textbf{E} \left[ \int_{0}^{T \wedge \tau_{N}} \|\sigma_{n}(s, u^{\varepsilon}_{n}(s)\|^{2}_{L_{Q}} ds \right]
			\\& \le \frac{1}{2}   \textbf{E}\left[ \underset{0 \le t \le T \wedge \tau_{N} }{\sup} \|u^{\varepsilon}_{n}(t) \|^{2}_{g} \right]  + 2 \varepsilon c^{2}_{2} K \textbf{E} \left[  \int_{0}^{T \wedge \tau_{N}} \| u^{\varepsilon}_{n}(s)\|^{2}_{V_{g}} ds \right] +2 \varepsilon c^{2}_{2} KT.
		\end{aligned}
	\end{equation}
	Substituting \eqref{eq:1.64} into \eqref{eq:1.63}, we obtain that for any $t \in [0,T \wedge \tau_{N}]$,
	\begin{equation}
		\label{eq:1.65}
		\begin{aligned}
			&\textbf{E}\left[ \underset{0\le t \le T \wedge \tau_{N}}{\sup} \left( \frac{1}{2} \|u^{\varepsilon}_{n}(t) \|^{2}_{g}  + (\frac{15\nu }{8} -  \varepsilon K - 2 \varepsilon c^{2}_{2} K) \int_{0}^{t} \|u^{\varepsilon}_{n}(s)\|^{2}_{V_{g}}ds \right) \right] 
			\\&\le \textbf{E}\|u(0)\|^{2}_{g} + \frac{8}{\nu} \textbf{E} \int_{0}^{T \wedge \tau_{N}} \| f(s)\|^{2}_{V^{*}} ds 
			+ \varepsilon KT +2 \varepsilon c^{2}_{2} KT.
		\end{aligned}
	\end{equation}
	By \eqref{eq:4.56}, we have 
	\begin{equation}
		\label{eq:1.66}
		\begin{aligned}
			\nu\int_{0}^{T \wedge \tau_{N}}\mathbf{E}\| u^{\varepsilon}_{n}(s) \|^{2}_{V_{g}}ds \le C_{1}\left( \mathbf{E}\| u(0)\|^{2}_{g} + \mathbf{E}\int_{0}^{T \wedge \tau_{N}}\| f(s)\|^{2}_{V^{*}}ds + 1 \right).
		\end{aligned}
	\end{equation}
	Combining \eqref{eq:1.65}-\eqref{eq:1.66}, we obtain
	\begin{equation}
		\label{eq:1.67}
		\begin{aligned}
			\textbf{E}\underset{0\le t \le T \wedge \tau_{N}}{\sup}\|u^{\varepsilon}_{n}(t) \|^{2}_{g}\le C_{1}\left(\mathbf{E}\| u(0)\|^{2}_{g} + \textbf{E} \int_{0}^{T}\|f(s)\|^{2}_{V^{*}}ds+ 1 \right), 
		\end{aligned}
	\end{equation}
	where $ \varepsilon < \frac{7\nu }{8K(1+2c^{2}_{2})}$. It can be inferred from \eqref{eq:1.63}-\eqref{eq:1.66} that
	\begin{equation}
		\label{eq:1.68}
		\begin{aligned}
			\mathbf{E}\left[\underset{0 \le t \le T}{\sup}\left(\|u^{\varepsilon}_{n}(t)\|^{2}_{g} + \nu\int_{0}^{t} \| u^{\varepsilon}_{n}(s)\|^{2}_{V_{g}}ds \right)\right]  \le C_{1}\left(\mathbf{E}\| u(0)\|^{2}_{g} + \int_{0}^{T}\textbf{E}\|f(s)\|^{2}_{V^{*}}ds +1 \right), 
		\end{aligned}
	\end{equation}
	where $C_{1} >0 $ is independent of $\varepsilon$, $u_{0}$ and $N$. Since $\tau_N\to T$ almost surely as $N\to\infty$, Fatou's lemma applied to \eqref{eq:1.68} yields \eqref{eq:4.39}. This completes the proof.
	\par 
	Proof of \eqref{eq:1.70}. Let $\delta > 0$ and the function $h(t, u^{\varepsilon}_{n}) = e^{- \delta t}\| u^{\varepsilon}_{n}\|^{2}_{g}$. By \eqref{eq:1.20} and  It\^o formula, we obtain 
	\begin{equation}
		\label{eq:1.71}
		\begin{aligned}
			& de^{- \delta t}\|u^{\varepsilon}_{n}(t)\|^{2}_{g} + \delta e^{- \delta t} \|u^{\varepsilon}_{n}(t)\|^{2}_{g}dt + 2\nu e^{- \delta t}\|u^{\varepsilon}_{n}(t)\|^{2}_{V_{g}}dt + 2\nu (R_{g}u^{\varepsilon}_{n}(t), e^{- \delta t}u^{\varepsilon}_{n}(t))_{g}dt
			\\& = (f(t), 2e^{- \delta t}u^{\varepsilon}_{n}(t))_{g}dt   
			+ e^{- \delta t}\varepsilon tr(\sigma_{n}(t, u^{\varepsilon}_{n}(t)) Q\sigma^{*}_{n}(t, u^{\varepsilon}_{n}(t)))dt 
			\\& + 2\sqrt{\varepsilon}(\sigma_{n}(t, u^{\varepsilon}_{n}(t))dW_{n}(t), e^{- \delta t}u^{\varepsilon}_{n}(t))_{g}.
		\end{aligned}
	\end{equation}
	For the last term on the left-hand side of \eqref{eq:1.71}, by Lemma \ref{lem:3.65} and \eqref{eq:1.2}, we have  
	\begin{equation}
		\label{eq:4.55}
		\begin{aligned}
			2\nu|( R_{g}u^{\varepsilon}_{n}(s) , u^{\varepsilon}_{n}(s) )_{g}	| \le 2\nu e^{- \delta t}  \frac{\|\nabla g\|_{\infty}}{m_{0}\lambda^{1/2}_{1}} \|u^{\varepsilon}_{n}(t)\|^{2}_{V_{g}} \le \frac{\nu }{8}e^{- \delta t} \|u^{\varepsilon}_{n}(t)\|^{2}_{V_{g}}. 
		\end{aligned}
	\end{equation}
		For the first term on the right-hand side of \eqref{eq:4.51}, by Young's inequality, we get
	\begin{equation}
		\label{eq:4.57}
		\begin{aligned}
			2e^{- \delta t}|(f(s), u^{\varepsilon}_{n}(t))_{g}| \le  \frac{1}{\delta}e^{- \delta t}\|f(s)\|^{2}_{V^{*}} + \delta e^{- \delta t} \|u^{\varepsilon}_{n}(s)\|^{2}_{g} .   
		\end{aligned}
	\end{equation}
	Define the sequence of stopping times as follows
	\[
	\tau_{N} = \inf\{t : \max(\|u^{\varepsilon}_{n}(t)\|^{2}_{g}, \, \int_{0}^{t}\|u^{\varepsilon}_{n}(s)\|^{2}_{V_{g}}ds) \ge  N\}.
	\]
	By \eqref{eq:4.55}- \eqref{eq:4.57}, taking the expectation, we obtain that for any $s \in [0,t \wedge \tau_{N}]$,
	\begin{equation}
		\label{eq:1.72}
		\begin{aligned}
			&\textbf{E} e^{- \delta (t\wedge \tau_{N})} \|u^{\varepsilon}_{n}(t \wedge \tau_{N})\|^{2}_{g} + 2\nu \textbf{E}\int_{0}^{t\wedge \tau_{N}} e^{-\delta s} \|u^{\varepsilon}_{n}(s)\|^{2}_{V_{g}}ds 
			\\& \le \textbf{E}\|u^{\varepsilon}_{n}(0)\|^{2}_{g} 
			 + \dfrac{\nu}{8}  \textbf{E}\int_{0}^{t\wedge \tau_{N}} e^{-\delta s}\|u^{\varepsilon}_{n}(s)\|^{2}_{V_{g}}ds + \dfrac{1}{\delta}\textbf{E}\int_{0}^{t\wedge \tau_{N}} e^{-\delta s}\|f(s)\|^{2}_{V^{*}}ds  
			 \\&+ \varepsilon \textbf{E}\int_{0}^{t \wedge \tau_{N}} e^{-\delta s} \|\sigma_{n}(s,u^{\varepsilon}_{n}(s))\|^{2}_{L_{Q}}ds . 
		\end{aligned}
	\end{equation}
	Combining this estimate with (\textbf{A2}) and assuming $\varepsilon < \frac{3\nu}{4K}$, we have
	\begin{equation}
		\label{eq:1.73}
		\begin{aligned}
			\textbf{E} e^{- \delta (t\wedge \tau_{N})} \|u^{\varepsilon}_{n}(t \wedge \tau_{N})\|^{2}_{g} + \nu  \textbf{E}\int_{0}^{t\wedge \tau_{N}} e^{-\delta s} \|u^{\varepsilon}_{n}(s)\|^{2}_{V_{g}}ds  
			  \le C_{2}\left( \mathbf{E}\|u(0) \|^{2}_{g} + \mathbf{E}\int_{0}^{t}e^{-\delta s} \| f(s)\|^{2}_{V^{*}}ds + 1 \right),
		\end{aligned}
	\end{equation}
	where $C_{2} >0 $ is independent of $\varepsilon$, $u_{0}$ and $N$. Note that we can obtain $\tau_{N} \to \infty$, a.s., as $N\to \infty$. Thus, the proof of \eqref{eq:1.70} is completed.
	\par 
	Proof of \eqref{eq:1.75} and \eqref{eq:1.79}. By the function $h(t, u^{\varepsilon}_{n}) = e^{- 2\delta t}\| u^{\varepsilon}_{n}\|^{4}_{g}$ and It\^o formula, taking the expectation, we obtain that for all $t \in [0,T]$,
		\begin{equation}
			\label{eq:1.76}
			\begin{aligned}
				&\textbf{E}e^{-2\delta t}\|u^{\varepsilon}_{n}(t)\|^{4}_{g} + 4\nu \textbf{E} \int_{0}^{t}e^{- 2\delta s}\|u^{\varepsilon}_{n}(s)\|^{2}_{V_{g}} \|u^{\varepsilon}_{n}(s)\|^{2}_{g} ds + 2\delta \textbf{E}\int_{0}^{t}e^{-2\delta s} \|u^{\varepsilon}_{n}(s)\|^{4}_{g}ds 
				\\& \le \textbf{E}\|u^{\varepsilon}_{n}(0)\|^{4}_{g} -
				4\nu \textbf{E}\int_{0}^{t}e^{- 2\delta s}\|u^{\varepsilon}_{n}(s)\|^{2}_{g}(u^{\varepsilon}_{n}(s), R_{g}u^{\varepsilon}_{n}(s))_{g} ds + 
				4\textbf{E}\int_{0}^{t}e^{-2\delta s} \|u^{\varepsilon}_{n}(s) \|^{2}_{g}(f(s), u^{\varepsilon}_{n}(s))_{g}ds
				\\& +  2\varepsilon \textbf{E}\int_{0}^{t} e^{-2\delta s}\|u^{\varepsilon}_{n}(s)\|^{2}_{g}\|\sigma_{n}(s, u^{\varepsilon}_{n}(s))\|^{2}_{L_{Q}}ds +  \textbf{E}\int_{0}^{t} e^{-2\delta s}\|\langle 2u^{\varepsilon}_{n}(s) , \sqrt{\varepsilon}\sigma(s, u^{\varepsilon}_{n}(s)) \rangle \|^{2}  ds.
			\end{aligned}
		\end{equation}
		For the second term on the right-hand side of \eqref{eq:1.76}, by Lemma \ref{lem:3.65} and  H\"older inequality, we have
		\begin{equation}
			\label{eq:4.37}
			\begin{aligned}
				4\nu \left| \textbf{E}\int_{0}^{t}e^{- 2\delta s}\|u^{\varepsilon}_{n}(s)\|^{2}_{g}(u^{\varepsilon}_{n}(s), R_{g}u^{\varepsilon}_{n}(s))_{g} ds \right| &\le 4\nu \frac{\|\nabla g\|_{\infty}}{m_{0}\lambda^{1/2}_{1}}\textbf{E}\int_{0}^{t}e^{- 2\delta s}\|u^{\varepsilon}_{n}(s)\|^{2}_{g} \|u^{\varepsilon}_{n}(s)\|^{2}_{V_{g}} ds
				\\& \le \frac{\nu }{4} \textbf{E}\int_{0}^{t}e^{- 2\delta s}\|u^{\varepsilon}_{n}(s)\|^{2}_{g} \|u^{\varepsilon}_{n}(s)\|^{2}_{V_{g}} ds. 
			\end{aligned}
		\end{equation}
		For the third term on the right-hand side of \eqref{eq:1.76}, by H\"older inequality and Young's inequality, we get
		\begin{equation}
			\label{eq:2.30}
			\begin{aligned}
				4\textbf{E}\int_{0}^{t}e^{-2\delta s} \|u^{\varepsilon}_{n}(s) \|^{2}_{g}(f(s), u^{\varepsilon}_{n}(s))_{g}ds  \le 	2 \delta\textbf{E}\int_{0}^{t}e^{-2\delta s} \|u^{\varepsilon}_{n}(s) \|^{4}_{g} ds +   \frac{27}{8 \delta^{3}}              \textbf{E} \int_{0}^{t}e^{-2\delta s}\|f(s)\|^{4}_{V^{*}}ds. 
			\end{aligned}
		\end{equation}
		For the fourth and last term on the right-hand side of \eqref{eq:1.76}, by condition (\textbf{A2}) we have
		\begin{equation}
			\label{eq:2.31}
			\begin{aligned}
				&2\varepsilon \textbf{E}\int_{0}^{t} e^{-2\delta s}\|u^{\varepsilon}_{n}(s)\|^{2}_{g}\|\sigma_{n}(s, u^{\varepsilon}_{n}(s))\|^{2}_{L_{Q}}ds +  \textbf{E}\int_{0}^{t} e^{-2\delta s} \|\langle 2u^{\varepsilon}_{n}(s) , \sqrt{\varepsilon}\sigma(s, u^{\varepsilon}_{n}(s)) \rangle \|^{2}ds
				\\& \le  2\varepsilon K\textbf{E}\int_{0}^{t} e^{-2\delta s}\|u^{\varepsilon}_{n}(s)\|^{2}_{g}\left( 1 + \| u^{\varepsilon}_{n}(s)\|^{2}_{V_{g}} \right)ds  + 4\varepsilon \textbf{E}\int_{0}^{t} e^{-2\delta s}\|\sigma_{n}(s, u^{\varepsilon}_{n}(s))\|^{2}_{L_{Q}}\|u^{\varepsilon}_{n}(s)\|^{2}_{g}ds
				\\& \le 6\varepsilon K\textbf{E}\int_{0}^{t} e^{-2\delta s}\|u^{\varepsilon}_{n}(s)\|^{2}_{V_{g}}\|u^{\varepsilon}_{n}(s)\|^{2}_{g}ds + 6\varepsilon K\textbf{E}\int_{0}^{t} e^{-2\delta s}\|u^{\varepsilon}_{n}(s)\|^{2}_{g}ds.
			\end{aligned}
		\end{equation}
		Combining \eqref{eq:2.30}-\eqref{eq:2.31}, we obtain
		\begin{equation}
			\label{eq:1.77}
			\begin{aligned}
				&\textbf{E}e^{-2\delta t}\|u^{\varepsilon}_{n}(t)\|^{4}_{g} +(4\nu- \frac{\nu}{4} - 6\varepsilon K)\textbf{E} \int_{0}^{t}e^{- \delta s}\|u^{\varepsilon}_{n}(s)\|^{2}_{V_{g}} \|u^{\varepsilon}_{n}(s)\|^{2}_{g} ds 
				\\& \le \textbf{E}\|u^{\varepsilon}_{n}(0)\|^{4}_{g} 
				+ \dfrac{27}{8\delta^{3}}\textbf{E}\int_{0}^{t}e^{-2\delta s} \|f(s)\|^{4}_{V^{*}}ds + 6\varepsilon K\textbf{E}\int_{0}^{t} e^{-2\delta s}\|u^{\varepsilon}_{n}(s)\|^{2}_{g}ds. 
			\end{aligned}
		\end{equation}
		By \eqref{eq:4.39}, there exists a constant $C_{1} >0$ such that $\textbf{E} \underset{t\in [0,T]}{\sup} \| u^{\varepsilon}_{n}(t)\|^{2}_{g} < C_{1}$. Taking $\varepsilon_{0} < \frac{\nu }{8K}$, we have 
		\begin{equation}
			\label{eq:1.78}
			\begin{aligned}
				& \textbf{E}e^{-2\delta t}\|u^{\varepsilon}_{n}(t)\|^{4}_{g} + 3\nu\textbf{E} \int_{0}^{t}e^{- 2\delta s}\|u^{\varepsilon}_{n}(t)\|^{2}_{V_{g}} \|u^{\varepsilon}_{n}(s)\|^{2}_{g} ds 
				\\&  \le \textbf{E}\|u^{\varepsilon}_{n}(0)\|^{4}_{g} 
				+ \dfrac{27}{8\delta^{3}}\textbf{E}\int_{0}^{t}e^{-2\delta s}\|f(s)\|^{4}_{V^{*}}ds +  \dfrac{3\varepsilon K C_{1}}{\delta}
				\\& \le C_{3}\left(\textbf{E}\|u^{\varepsilon}_{n}(0)\|^{4}_{g} 
				+ \textbf{E}\int_{0}^{t}e^{-2\delta s}\|f(s)\|^{4}_{V^{*}}ds +1  \right),
			\end{aligned}
		\end{equation}
		where $C_{3}$ is a positive constant independent of  $\varepsilon$, $u_{0}$ and $N$. The proof of \eqref{eq:1.75} has been completed.
	\par 
	Finally, we prove \eqref{eq:1.79}. Taking the supremum and expectation in \eqref{eq:1.76}, we obtain that for all  $ t \in [0,T \wedge \tau_{N} ]$, 
	\begin{equation}
		\label{eq:1.80}
		\begin{aligned}
			&\textbf{E} \underset{0 \le t \le T \wedge \tau_{N} }{\sup}e^{-2\delta t}\|u^{\varepsilon}_{n}(t)\|^{4}_{g} +(4\nu - \frac{\nu}{4})\textbf{E}\underset{0 \le t \le T \wedge \tau_{N} }{\sup}\int_{0}^{t} e^{- 2\delta s}\|u^{\varepsilon}_{n}(s)\|^{2}_{V_{g}} \|u^{\varepsilon}_{n}(s)\|^{2}_{g} ds 
			\\& \le \textbf{E}\|u^{\varepsilon}_{n}(0)\|^{4}_{g} + \dfrac{27}{8\delta^{3} } \textbf{E} \underset{0\le t\le T\wedge \tau_{N} }{\sup}\int_{0}^{t}e^{-2\delta s}\|f(s)\|^{4}_{V^{*}}ds
			\\& +  2\varepsilon \textbf{E}\underset{0 \le t \le T\wedge \tau_{N}  }{\sup}\int_{0}^{t} e^{-2\delta s}\|u^{\varepsilon}_{n}(s)\|^{2}_{g}\|\sigma_{n}(s, u^{\varepsilon}_{n}(s))\|^{2}_{L_{Q}}ds 
			\\&+ 4 \varepsilon \textbf{E}\underset{0 \le t \le T\wedge \tau_{N}  }{\sup}\int_{0}^{t} e^{-2\delta s}\|\sigma_{n}(s, u^{\varepsilon}_{n}(s))\|^{2}_{L_{Q}}\|u^{\varepsilon}_{n}(s)\|^{2}_{g}ds  
			\\& +  4 \sqrt{\varepsilon}\textbf{E}\underset{0 \le t \le T\wedge \tau_{N}  }{\sup}\int_{0}^{s}e^{-2\delta s}\|u^{\varepsilon}_{n}(s)\|^{2}_{g}(\sigma_{n}(s,u^{\varepsilon}_{n}(s))dW_{n}(s),  u^{\varepsilon}_{n}(s))_{g}.
		\end{aligned}
	\end{equation}
	For the last term on the right-hand side of \eqref{eq:1.80}, using condition (\textbf{A2}) and Burkholder-Davis-Gundy inequality, we have 
	\begin{equation}
		\label{eq:1.81}
		\begin{aligned}
			&4 \sqrt{\varepsilon}\textbf{E}\underset{0 \le t \le T\wedge \tau_{N} }{\sup}\int_{0}^{s}e^{-2\delta s}\|u^{\varepsilon}_{n}(s)\|^{2}_{g}(\sigma_{n}(s,u^{\varepsilon}_{n}(s))dW_{n}(s),  u^{\varepsilon}_{n}(s))_{g} 
			\\& \le 4 \sqrt{\varepsilon} c_{2} \textbf{E}\left(\int_{0}^{T\wedge \tau_{N}} e^{-2\delta s} \|u^{\varepsilon}_{n}(s)\|^{2}_{g} \|\sigma_{n}(s,u^{\varepsilon}_{n}(s))\|^{2}_{L_{Q}}\|u^{\varepsilon}_{n}(s)\|^{2}_{g}ds      \right)^{1/2}
			\\& \le \frac{1}{2}\textbf{E} \underset{0\le t \le T\wedge \tau_{N}}{\sup}e^{-2\delta t} \|u^{\varepsilon}_{n}(t)\|^{4}_{g}  + 2\varepsilon K c^{2}_{2}\textbf{E}\int_{0}^{T\wedge \tau_{N}}e^{-2\delta s} \|u^{\varepsilon}_{n}(s)\|^{2}_{g}\|\sigma_{n}(s,u^{\varepsilon}_{n}(s))\|^{2}_{L_{Q}} ds 
	        \\& \le \frac{1}{2}\textbf{E} \underset{0\le t \le T\wedge \tau_{N}}{\sup}e^{-2\delta t} \|u^{\varepsilon}_{n}(t)\|^{4}_{g} + 2\varepsilon K c^{2}_{2}\textbf{E}\int_{0}^{T\wedge \tau_{N}}e^{-2\delta s} \|u^{\varepsilon}_{n}(s)\|^{2}_{g}\|u^{\varepsilon}_{n}(s)\|^{2}_{V_{g}} ds 
	        \\& +  2\varepsilon K c^{2}_{2}\textbf{E}\int_{0}^{T\wedge \tau_{N}}e^{-2\delta s} \|u^{\varepsilon}_{n}(s)\|^{2}_{g}ds.
		\end{aligned}
	\end{equation}
	Combining \eqref{eq:2.31} - \eqref{eq:1.77} with \eqref{eq:1.81}, we obtain
	\begin{equation}
		\label{eq:1.82}
		\begin{aligned}
			&\frac{1}{2}\textbf{E} \underset{0 \le t \le T\wedge \tau_{N} }{\sup}e^{-2\delta t}\|u^{\varepsilon}_{n}(t)\|^{4}_{g} + \left( 4\nu - \frac{\nu}{4} - 6\varepsilon K - 2\varepsilon K c^{2}_{2} \right)\textbf{E} \int_{0}^{T\wedge \tau_{N}} e^{- 2\delta s} \|u^{\varepsilon}_{n}(t)\|^{2}_{V_{g}} \|u^{\varepsilon}_{n}(s)\|^{2}_{g} ds 
			\\& \le \textbf{E}\|u^{\varepsilon}_{n}(0)\|^{4}_{g} + \dfrac{27}{8\delta^{3}} \textbf{E} \underset{0\le t\le T\wedge \tau_{N}}{\sup}\int_{0}^{t}e^{-2\delta s}\|f(s)\|^{4}_{V^{*}}ds +   (6\varepsilon K + 2\varepsilon K c^{2}_{2})\textbf{E}\underset{0 \le t \le T\wedge \tau_{N} }{\sup}\int_{0}^{t} e^{-2\delta s}\|u^{\varepsilon}_{n}(s)\|^{2}_{g}ds.
		\end{aligned}
	\end{equation}
	From \eqref{eq:1.70}, there exists a constant $C_{1} >0$ such that $\textbf{E} \underset{t\in [0,T]}{\sup} \| u^{\varepsilon}_{n}(t)\|^{2}_{g} < C_{1}$. If $\varepsilon < \frac{11\nu}{K(24+8c^{2}_{2})}$, we obtain that
	\begin{equation}
		\label{eq:1.83}
		\begin{aligned}
			&\textbf{E}\underset{0 \le t \le T}{\sup} e^{-2\delta t}\|u^{\varepsilon}_{n}(t)\|^{4}_{g}  + \nu \textbf{E}\int_{0}^{T}e^{-2\delta t}\| u^{\varepsilon}_{n}(s)\|^{2}_{V_{g}}\|u^{\varepsilon}_{n}(s)\|^{2}_{g} ds 
			\\& \le \textbf{E}\|u^{\varepsilon}_{n}(0)\|^{4}_{g} +  \dfrac{27}{4\delta^{3}}\textbf{E} \int_{0}^{T} e^{-2\delta t}\|f(t)\|^{4}_{V^{*}} dt + \frac{(6 + 2c^{2}_{2} ) K C_{1} \varepsilon }{\delta} 
			\\& \le C_{4}\left(\textbf{E}\|u^{\varepsilon}_{n}(0)\|^{4}_{g} 
			+ \textbf{E}\int_{0}^{t}e^{-2\delta s}\|f(s)\|^{4}_{V^{*}}ds +1  \right),
		\end{aligned}
	\end{equation}
	where $C_{4}$ is a positive constant independent of $\varepsilon$, $u_{0}$ and $N$. The proof of \eqref{eq:1.79} has been completed.
	\end{proof}

	\begin{prop}\cite{ref S. S. Sritharan}
		\label{prop:1.7}
		Let $p > 1$ and $q$ be the conjugate of $p$. Let $V \hookrightarrow H \hookrightarrow V^{*}$ be a Gelfand triple, where $V$ is a separable reflexive Banach space, $V^{*}$ is its strong dual and $H$ a separable Hilbert space with dense injections. Let $M$ be an $H$-valued $L^{2}$-integrable continuous martingale whose paths belong P-a.s. to $L^{p}(0, T ; V )$ and with $M(0) = 0$.\par
		Assume that $X$ and $U$ are two adapted processes whose trajectories belong almost surely to $L^{ p}(0, T ; V )$ and $L^{q} (0, T ; V^{*})$, respectively, such that
		\begin{equation}
			\label{eq:1.21}
			X(T) = X_{0} + \int_{0}^{t}U(s)ds + M(t),
		\end{equation}
		holds almost surely for all $t \in [0, T ]$ with $X_{0} \in H$. Then $X$ has paths in $C([0, T ]; H )$ almost surely, and 
		\begin{equation}
			\label{eq:1.22}
			|X(t)|^{2} =  |X_{0}|^{2} + 2\int_{0}^{t} (X(s), U(s))ds +2\int_{0}^{t}(X(s), dM(s)) + tr[M]_{t}
		\end{equation}
		holds a.s. and for all $t\in [0, T]$.
	\end{prop}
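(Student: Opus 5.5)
The plan is a Galerkin approximation of the Gelfand triple $V\hookrightarrow H\hookrightarrow V^{*}$, proving the energy identity \eqref{eq:1.22} first in finite dimensions and then passing to the limit. This is the standard proof of the Krylov--Rozovskii Itô formula.

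\textbf{Step 1 (finite-dimensional identity).} Fix an orthonormal basis $\{h_k\}\subset V$ of $H$ whose span is dense in $V$, and let $\Pi_n$ be the $H$-orthogonal projection onto $\operatorname{span}\{h_1,\dots,h_n\}$. Extend $\Pi_n$ to $V^{*}$ by $\Pi_n u=\sum_{k\le n}\langle u,h_k\rangle h_k$. Applying $\Pi_n$ to \eqref{eq:1.21} gives
\[
\Pi_n X(t)=\Pi_n X_0+\int_0^t\Pi_n U(s)\,ds+\Pi_n M(t),
\]
a finite-dimensional semimartingale. The classical Itô formula then yields \eqref{eq:1.22} for $\Pi_n X$, with $\Pi_n U$, $\Pi_n M$ and $\operatorname{tr}[\Pi_n M]_t$.

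\textbf{Step 2 (uniform bounds and continuity).} Apply Step 1 to the difference $\Pi_n X-\Pi_m X$. Use the duality bound $|\langle \Pi_n X-\Pi_m X,\Pi_nU-\Pi_mU\rangle|$, controlled via $L^p(V)$--$L^q(V^{*})$ Hölder. Use the Burkholder--Davis--Gundy inequality for the stochastic integral, and note $\operatorname{tr}[(\Pi_n-\Pi_m)M]_T\to0$ since $M$ is $L^2$-integrable. After localizing with stopping times $\tau_R$ defined by $\int_0^t(\|X\|_V^p+\|U\|_{V^*}^q)\,ds\ge R$, this shows $\{\Pi_nX\}$ is Cauchy in $L^2(\Omega;C([0,T];H))$ on $[0,\tau_R]$. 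The limit coincides with $X$ a.e. in $t$ and provides a continuous $H$-valued version. Since $\tau_R\uparrow T$, this gives $X\in C([0,T];H)$ a.s.

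\textbf{Step 3 (passing to the limit).} Let $n\to\infty$ in the identity of Step 1. We have $|\Pi_nX(t)|^2\to|X(t)|^2$ uniformly in $t$ by Step 2. For the drift term, $\int_0^t\langle \Pi_nX,\Pi_nU\rangle\,ds\to\int_0^t\langle X,U\rangle\,ds$ by dominated convergence, using that $\Pi_n$ is uniformly bounded on $V$ and $V^*$ for a suitable basis. The stochastic integrals converge in probability uniformly in $t$, and the quadratic variation traces converge monotonically.

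\textbf{Main obstacle.} The hardest point is the uniform $V$/$V^*$ boundedness of $\Pi_n$ in a general reflexive Banach space $V$. Without it, the duality pairing cannot be controlled in Step 3. I would try first to pick the basis from the eigenfunctions of a compatible self-adjoint operator, which is possible in the Hilbertian $V_g$ setting of this paper. Failing that, I would replace Galerkin projections by time-mollification (Steklov averages) of $X$ as in Krylov--Rozovskii. This avoids projections entirely, at the cost of a more delicate limit for the martingale term. Since the statement is cited from \cite{ref S. S. Sritharan}, one may alternatively invoke that reference directly.
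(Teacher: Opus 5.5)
\textbf{There is a genuine gap.} The paper gives no proof of this proposition; it quotes it from Sritharan's work, where it is the standard It\^o formula of Pardoux and Krylov--Rozovskii. Your sketch therefore has to stand on its own, and it does not. Everything hinges on the property you postpone to the end: an $H$-orthonormal basis $\{h_k\}\subset V$ with $\sup_n\|\Pi_n\|_{\mathcal{L}(V)}<\infty$ and $\Pi_n v\to v$ in $V$.

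You need this already in Step 2, not only in Step 3. Since $(\Pi_n-\Pi_m)^2=\Pi_n-\Pi_m$ and $\langle \Pi_n u,v\rangle=\langle u,\Pi_n v\rangle$, the drift term in your Cauchy estimate is exactly $2\int_0^t\langle U,(\Pi_n-\Pi_m)X\rangle\,ds$. H\"older makes it small only if $(\Pi_n-\Pi_m)X\to 0$ in $L^p(0,T;V)$. Without that you get neither the continuity of $X$ in $H$ nor the identity.

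For a general separable reflexive $V$ no such basis need exist. It would make $\{h_k\}$ a Schauder basis of $V$, since the expansion $v=\sum_k(v,h_k)h_k$ would converge in $V$ and be unique. Yet there are separable reflexive spaces without a basis (closed subspaces of $\ell^p$, $p\neq 2$, lacking the approximation property). Every separable Banach space admits a continuous dense injection into a Hilbert space, so such $V$ do occur in Gelfand triples covered by the statement.

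\textbf{Your fallbacks do not repair this.} Eigenfunctions of a compatible operator are unavailable even in the paper's setting. On the unbounded channel $V_g\hookrightarrow H_g$ is not compact, so $A_g$ has essential spectrum and in general no orthonormal eigenbasis; for $g\equiv 1$ on a flat strip it has no eigenvalues at all. Indeed the paper only takes an arbitrary orthonormal basis $\{e_j\}\subset D(A_g)$.

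For Hilbert $V$ the right fix is the infinite-rank spectral projections $E_n=\mathbf{1}_{[0,n]}(\Lambda)$ of the self-adjoint $\Lambda\ge 0$ with $\|v\|_V^2\simeq|v|^2+|\Lambda^{1/2}v|^2$. These are uniformly bounded on $V$, $H$ and $V^*$, map $V^*$ into $H$, and converge strongly to the identity. Step 1 then uses the It\^o formula for $H$-valued semimartingales, with drift $E_nU\in L^q(0,T;H)$, instead of the finite-dimensional one. That proves the case $p=q=2$, $V=V_g$, which is all the paper uses, but not the proposition as stated.

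For general $V$ the standard proofs regularise in time instead, using step functions taken at partition points chosen so that they converge in $L^p(0,T;V)$. Recovering $2\int_0^t(X,dM)+\mathrm{tr}[M]_t$ from the martingale increments in such a scheme is the entire difficulty, and your plan only names it. Invoking the reference, your last option, is what the paper does, but then the Galerkin sketch is not a proof of the general statement.

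\textbf{An unused hypothesis.} You never use the assumption that $M$ has paths in $L^p(0,T;V)$, and it gives continuity cheaply. Set $Y=X-M$; pathwise $Y\in L^p(0,T;V)$ and $Y'=U\in L^q(0,T;V^*)$. The deterministic Lions--Magenes lemma then yields $Y\in C([0,T];H)$ and $|Y(t)|^2=|X_0|^2+2\int_0^t\langle U,Y\rangle\,ds$, so $X=Y+M\in C([0,T];H)$ at once. The identity then reduces to the It\^o formula for $|M|^2$ plus a product rule for $(Y,M)$.
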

	The monotonicity method for proving the existence of strong solutions for SPDEs was first proposed by Pardoux \cite{ref Pardoux. E}. Menaldi and Sritharan \cite{ref Menaldi. J.L} first proved the existence of strong solutions to the Navier--Stokes equations with additive noise using the local monotone method. We next establish the existence of probabilistically strong solutions to \eqref{eq:2.6} and prove pathwise uniqueness. 
	\begin{thm}
		\label{thm:1.8}
		Suppose  $f \in L^{4}(0, T; V^{*}_{g})$, (\textbf{A1})--(\textbf{A3}) hold and $\mathbf{E}\,\|u_{0}\|^{4}_{g } < \infty$, then  there exists $\varepsilon_{0} = \min\{\frac{\nu}{8K}, \frac{7\nu}{8K(1+2c^{2}_{2})},\frac{11\nu }{8K(3+ c^{2}_{2})}\}$  such that for every $T> 0$, $ 0< \varepsilon < \varepsilon_{0}$,  there exists a probabilistically strong solution of the following stochastic $g$-Navier--Stokes equation:
		\begin{equation}
			\label{eq:1.23}
		du^{\varepsilon}+(\nu A_{g}u^{\varepsilon} + B_{g}(u^{\varepsilon}, u^{\varepsilon}) + \nu R_{g}u^{\varepsilon})dt = f(t)dt + \sqrt{\varepsilon}\sigma(t,u^{\varepsilon})dW(t)
		\end{equation}
		in $L^{2}(\Omega : C(0, T ; H_{g} )) \cap L^{2}(\Omega \times(0, T ); V_{g} )$. Meanwhile, the solution is pathwise unique.
	\end{thm}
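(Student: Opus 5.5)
We follow the Menaldi--Sritharan local monotonicity scheme. We use the Galerkin system \eqref{eq:1.20} together with the uniform bounds of Proposition \ref{prop:1.22}, which hold for $0<\varepsilon<\varepsilon_0$ with the stated $\varepsilon_0$.

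\textbf{Step 1: weak limits.} From \eqref{eq:4.39}, $\{u^\varepsilon_n\}$ is bounded in $L^2(\Omega;L^\infty(0,T;H_g))\cap L^2(\Omega\times(0,T);V_g)$. From \eqref{eq:1.79} and Remark \ref{rem:1.3}, $\mathbf{E}\int_0^T\|u^\varepsilon_n\|_{L^4}^4\,dt\le C$. By \eqref{eq:2.8}, Lemma \ref{lem:3.65} and \eqref{eq:2.57}, $F(u^\varepsilon_n)$ is bounded in $L^2(\Omega;L^{2}(0,T;V_g^*))$ after a localization. Indeed, $\|B_g(u,u)\|_{V^*}^2\le\|u\|_g^2\|u\|_{V_g}^2$, and the fourth-moment bound \eqref{eq:1.79} controls its expectation. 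By (A2), $\sigma(\cdot,u^\varepsilon_n)$ is bounded in $L^2(\Omega\times(0,T);L_Q)$. We therefore extract a subsequence such that $u^\varepsilon_n\rightharpoonup u$ weakly in $L^2(\Omega\times(0,T);V_g)$ and weakly-star in $L^2(\Omega;L^\infty(0,T;H_g))$, $F(u^\varepsilon_n)\rightharpoonup F_0$ in $L^2(\Omega\times(0,T);V_g^*)$, $P_n\sigma(\cdot,u^\varepsilon_n)\rightharpoonup S$ in $L^2(\Omega\times(0,T);L_Q)$, and $u^\varepsilon_n(T)\rightharpoonup\xi$ in $L^2(\Omega;H_g)$. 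Passing to the limit in \eqref{eq:1.20} tested against $e_j$ gives
\[
du=(f+F_0)\,dt+\sqrt\varepsilon S\,dW .
\]
Proposition \ref{prop:1.7} then gives $u\in C([0,T];H_g)$ a.s. together with the energy equality.

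\textbf{Step 2: identification of $F_0$ and $S$ (main obstacle).} The nonlinearity is only locally monotone, on $L^4$-balls by Proposition \ref{prop:1.4}. Following Menaldi--Sritharan, for $v\in L^4(\Omega\times(0,T);V_g)$ adapted we set
\[
r(t)=\frac{1}{\nu^{3}}\|v(t)\|_{L^4}^4 ,
\]
and apply the It\^o formula to $e^{-\int_0^t r(s)ds}\|u^\varepsilon_n(t)\|_g^2$ and to $e^{-\int_0^t r(s)ds}\|u(t)\|_g^2$. By weak lower semicontinuity of $\mathbf{E}e^{-\int_0^T r}\|\cdot\|_g^2$, we obtain
\[
\mathbf{E}\int_0^T e^{-\int_0^t r}\Big[2(F_0-F(v),u-v)_g+\varepsilon\|S-\sigma(v)\|_{L_Q}^2-r\|u-v\|_g^2\Big]dt\ge0 .
\]
Here the monotonicity inequality \eqref{eq:1.13}, applied with the weight $r$ in place of $r^4/\nu^3$, is what allows us to pass from the Galerkin terms to the limit terms. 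Taking $v=u$ first yields $S=\sigma(u)$. This requires $u\in L^4$, which holds by \eqref{eq:1.79} and \eqref{eq:1.16}, although a density argument may be needed for the adaptedness of $v$. Next we take $v=u-\lambda w$ with $\lambda\to0^+$, divide by $\lambda$, and use hemicontinuity of $F$ (continuity of $\lambda\mapsto(F(u-\lambda w),w)_g$, which follows from bilinearity of $B_g$) to get $F_0=F(u)$. The delicate points are that $\varepsilon<\varepsilon_0\le 9\nu/(16L)$ is needed so that \eqref{eq:1.13} holds, and that the $R_g$ term is absorbed via \eqref{eq:1.18} using $\|\nabla g\|_\infty$ small.

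\textbf{Step 3: pathwise uniqueness.} Let $u_1,u_2$ be two solutions and set $w=u_1-u_2$. We apply the It\^o formula of Proposition \ref{prop:1.7} to $e^{-\int_0^t r(s)ds}\|w\|_g^2$ with
\[
r=\frac{1}{\nu^3}\|u_2\|_{L^4}^4 ,
\]
which is integrable in time a.s. by \eqref{eq:1.16}. By Proposition \ref{prop:1.4} the drift is nonpositive. Taking expectations with stopping times $\tau_N$ to handle the martingale term gives $\mathbf{E}e^{-\int_0^{t\wedge\tau_N}r}\|w(t\wedge\tau_N)\|_g^2=0$, so $w\equiv0$ a.s. Combined with the Yamada--Watanabe principle, or more directly since the construction above is on the original probability space, this gives a probabilistically strong solution, which is unique.
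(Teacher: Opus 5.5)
Your route is the paper's: a Galerkin scheme, the bounds of Proposition \ref{prop:1.22}, weak limits $F_0$ and $S$, and the Menaldi--Sritharan exponential weight, identifying $S$ via $v=u$ and then $F_0$ via $v=u-\lambda w$. However, the limiting inequality in your Step 2 has the wrong sign, and as written the identification fails.

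The Galerkin monotonicity gives $X_n\le 0$ for the weighted expression built from $u_n$. Split $X_n=Y_n+Z_n$ into the part quadratic in $u_n$ and the remainder. The two It\^o formulas and weak lower semicontinuity give $\liminf_n Y_n\ge Y$, and $Z_n\to Z$, so
\[
\mathbf{E}\int_0^T e^{-\int_0^t r}\Big[2(F_0-F(v),u-v)_g+\varepsilon\|S-\sigma(v)\|_{L_Q}^2-r\|u-v\|_g^2\Big]dt\le\liminf_{n\to\infty}X_n\le 0,
\]
which is the paper's \eqref{eq:1.38}. As a check, if $u_n\to u$ strongly the left side is $\le 0$ by \eqref{eq:1.13} itself. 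With your ``$\ge 0$'', the choice $v=u$ only gives the vacuous $\varepsilon\,\mathbf{E}\int_0^T e^{-\int_0^t r}\|S-\sigma(u)\|_{L_Q}^2\,dt\ge 0$. In the $\lambda$-step the term $\varepsilon\|S-\sigma(u-\lambda w)\|_{L_Q}^2/\lambda$ then does not disappear, so neither $S$ nor $F_0$ is identified. Reversing the inequality repairs the argument.

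There are three smaller points.
\begin{enumerate}
\item The stated $\varepsilon_0$ contains no $L$, so $\varepsilon_0\le 9\nu/(16L)$ does not follow from the hypotheses. You must add it to the minimum; the paper's appeal to monotonicity in \eqref{eq:1.32} carries the same unstated restriction.
\item The It\^o drift contains $2(F(u)-F(v),u-v)_g$, so for \eqref{eq:1.13} to make it nonpositive you need $r=\frac{2}{\nu^{3}}\|v\|_{L^4}^4$, not $\frac{1}{\nu^{3}}\|v\|_{L^4}^4$. The same correction applies in Step 3.
\item Lower semicontinuity at time $T$ requires $\xi=u(T)$, which the paper obtains with the test functions $\phi(t)e_j$.
\end{enumerate}

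Your uniqueness proof genuinely differs from the paper's. The paper bounds the trilinear term via Ladyzhenskaya by $\|w\|_g^2\|u_1\|_{V_g}^2$, takes expectations with stopping times on $\|u_i(t)\|_g$, and applies Gronwall. You instead put the random coefficient into the weight $e^{-\int_0^t r}$ and use \eqref{eq:1.13} to make the drift nonpositive pathwise. Your version is the sounder one, for two reasons. First, Gronwall in expectation with the unbounded random factor $\|u_1(s)\|_{V_g}^2$ is not justified by stopping times that only control $\|u_i(t)\|_g$. Second, \eqref{eq:1.13} correctly treats (A3) as a $V_g$-Lipschitz bound absorbed by viscosity, whereas \eqref{eq:1.43} uses the $H_g$-norm.
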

	\begin{proof}
		Let $\Omega_{T} := \Omega \times (0,T)$. Using the a priori estimates given in the above Proposition  \ref{prop:1.22}, it follows from the Banach-Alaoglu theorem that along a subsequence, the Galerkin approximations $\{u_{n}\}$ have the following convergences:
		\[
		u_{n} \to u \quad weakly \,\, in \,\, L^{2}(\Omega_{T}, V_{g}),
		\]
		\[
		u_{n} \to u \quad weakly \,\, star\,\,  in \,\,  L^{4}(\Omega, L^{\infty}(0,T; H_{g})),
		\]
		and 
		\[
		u_{n}(T) \to \eta \quad weakly \,\, in \,\, L^{2}(\Omega_{T}, H_{g}).
		\]
		Recall that $F(u) := -\nu A_{g}u - B_{g}(u,u)-\nu R_{g}u$. Since $F(u_{n})$ is bounded in $L^{2}(\Omega_{T} , V^{*}_{g})$,
		\[
		F(u_{n})\to F_{0} \quad weakly \,\,  in \,\, L^{2}(\Omega_{T} , V^{*}_{g}),
		\]
		Since $\sigma$ satisfies the linear growth condition (\textbf{A2}) and $u_{n}$ is uniformly bounded in $L^{2}(0, T ; V_{g} )$, we may extract a subsequence such that
		\[
		\sigma_{n}(\cdot, u_{n}) \to  S \quad  weakly\,\, in \,\, L^{2}(\Omega_{T}, L_{Q}(H_{0,g};H_{g})).
		\]
		\par 
		Because $u_n$ and $F(u_n)$ converge only weakly, the nonlinear limit cannot be identified by passing directly to the limit in the differential equation. Then, by setting $t \in [0, T]$ to $0$ outside this range, the equation \eqref{eq:1.20} is extended to the open interval $(-\delta, T + \delta)$ with $\delta > 0$. Let $\phi(t)$ be a function in $H^{1}_{g}(-\delta, T + \delta)$ with $\phi(0) = 1$. Define for all integers $j \ge 1, e _{j }(t) = \phi(t)e_{ j}$ where ${e_{ j }}$ is the fixed orthonormal sequence for $H_{g}$.
		\par
		Since $e_{j}(t)$ is a deterministic function, we regard it as a finite variation process and set $Y(t) = (u^{\varepsilon}_{n}(t), e_{j}(t))$. Using It\^o formula, we can obtain
		\begin{equation}
			\label{eq:1.24}
			\begin{aligned}
				(u_{n}(T), e_{j}(T))_{g} &= (u_{n}(0), e_{j})_{g} + \int_{0}^{T} \left( u_{n}(s), \dfrac{de_{j}(s)}{ds} \right)_{g}ds + \int_{0}^{T} (F(u_{n}(s)), e_{j}(s))_{g}ds 
				\\& + \int_{0}^{T} (f(s), e_{j}(s))_{g}ds +\sqrt{\varepsilon}\int_{0}^{T}(\sigma(s,u_{n}(s))dW_{n}(s), e_{j}(s)).
			\end{aligned}
		\end{equation}
		We pass to the limit in \eqref{eq:1.24} term by term. It remains to treat the stochastic integral for fixed $j$. 
		\par
		Let $\mathcal{P}_{T}$ denote the class of predictable processes with values in $L^{2}(\Omega_{T}; L_{Q}(H_{0,g}; H_{g}))$ with inner product given by
		\begin{equation}
			\label{eq:1.25}
			(G,J)_{\mathcal{P}_{T}} = \textbf{E}\int_{0}^{T} tr\{G(s)QJ^{*}(s)\}ds \quad \forall \,G,J \in \mathcal{P}_{T}.
		\end{equation}
		The map $J : \mathcal{P}_{T} \to L^{2}(\Omega_{T})$, defined by the real-valued integral,  $J_{t}(G) = \int_{0}^{t}(G(s)dW(s), e_{j}(s) )_{g}$ is linear and continuous. Hence, $(\sigma_{n}(s, u_{n}(s))P_{n}, R)_{\mathcal{P}_{T}} \to (SdW (s), R)_{\mathcal{P}_{T}}$  for any  $R \in \mathcal{P}_{T}$ as $n \to \infty$ by the weak convergence of $\sigma_{n}(s, u_{n}(s))P_{n} \to S$ in $L^{2}_{Q}(H_{0,g}; H_{g} )$. Thus we can conclude that
		\begin{equation}
			\label{eq:1.26}
		\int_{0}^{T} (\sigma_{n}(s, u_{n}(s))dW_{n}(s), e_{j}(s))_{g} \to \int_{0}^{T} \phi(s)(S(s)dW(s), e_{j})_{g}, \quad as\,\, n \to \infty \,\,for\,\, each\,\, j.
		\end{equation}
		and
		\begin{equation}
			\label{eq:1.27}
			\int_{0}^{T} (F_{n}( u_{n}(s)), e_{j}(s))_{g} \to \int_{0}^{T} (F_{0}, e_{j}(s))_{g}ds, \quad as \,\, n \to \infty \,\, for\,\, each\,\, j.
		\end{equation}
		Taking the limit of each term in \eqref{eq:1.24}, we obtain
		\begin{equation}
			\label{eq:1.28}
			\begin{aligned}
			(\eta, e_{j})_{g}\phi(T) &= (u_{0}, e_{j})_{g} + \int_{0}^{T} (F_{0}, e_{j})_{g}\phi(s)ds + \int_{0}^{T} \left( u(s), \dfrac{de_{j}(s)}{ds} \right)_{g}ds 
			\\& + \int_{0}^{T} (f(s), e_{j}(s))_{g}ds + \sqrt{\varepsilon}\int_{0}^{T} \phi(s)(S(s)dW(s), e_{j})_{g}.
			\end{aligned}
		\end{equation}
		Choose the approximating sequence so that its derivative converges to $\delta_{t}$ in the distributional sense. Thus, for any $\psi \in C([0,T]; H_{g})$, we have
		\begin{equation}
			\label{eq:1.29}
			\underset{k \to \infty}{\rm{lim}} \int_{0}^{T} \phi'_{k}(s) \psi(s)ds  = -\underset{k \to \infty}{\rm{lim}} \int_{0}^{T} \phi_{k}(s) \psi'(s)ds= -\psi(t).
		\end{equation}
		Choose functions $\{\phi_{k}\}$ such that $\phi_{k} \to  \textbf{1}_{[0,T]}$ and $\phi'_{k} \to \delta_{t}$ weakly as $k \to \infty$. Substituting $\phi_{k}$ for $\phi$ in \eqref{eq:1.28} and then letting $k \to \infty$, we get that
		\begin{equation}
			\label{eq:1.30}
			(u(t), e_{j})_{g} = (u_{0}, e_{j})_{g} + \int_{0}^{T}(F_{0}, e_{j})_{g}ds + \sqrt{\varepsilon}\int_{0}^{T}(S(s)dW(s), e_{j})_{g}.
		\end{equation}
		When $t < T$ with $(u_{T} , e_{ j} )_{g} = (\eta, e_{j})_{g}$ for all $j$, we obtain 
		\begin{equation}
			\label{eq:1.31}
			u(t) = u(0) + \int_{0}^{t}F_{0}(s)ds + \sqrt{\varepsilon}\int_{0}^{t}S(s)dW(s)
		\end{equation}
		with $u(T) = \eta$.
		\par
		Let $v \in L^{\infty}(\Omega_{T}; H_{m})$ for $m \le n$, and  define $r(t) = \frac{27}{\nu^{3}} \int_{0}^{t}\| v(s)\|^{4}_{L^{4}(\mathcal{O})}ds$. 	By using the monotonicity, we get
		\begin{equation}
			\label{eq:1.32}
			\begin{aligned}
			X_{n}(T) &:=  2{\textbf{E}}\int_{0}^{T} e^{-r(s)}(F(u_{n}(s))- F(v(s)), u_{n}(s)-v(s))_{g}ds 
				- {\textbf{E}}\int_{0}^{T} \dfrac{dr(s)}{dt}e^{-r(s)}\| u_{n}(s)-v(s)\|^{2}_{g} ds 
				\\& + {\textbf{E}}\int_{0}^{T}e^{-r(s)} \|\sigma_{n}(s,u_{n}(s)) -\sigma_{n}(s,v(s)) \|^{2}_{L_{Q}}ds
				\\& \le 0.
			\end{aligned}
		\end{equation}
		Write $X_{n}(T)$ = $Y_{n} + Z_{n}$, where
		\begin{equation}
			\label{eq:1.33}
			\begin{aligned}
				Y_{n} &= 2{\textbf{E}}\int_{0}^{T} (F(u_{n}(s)), u_{n}(s))_{g}e^{-r(s)}ds - {\textbf{E}}\int_{0}^{T} \dfrac{dr(s)}{dt}e^{-r(s)}\| u_{n}(s)|^{2}_{g} ds 
				\\& + {\textbf{E}}\int_{0}^{T}e^{-r(s)} \|\sigma_{n}(s,u_{n}(s))\|^{2}_{L_{Q}}ds,
			\end{aligned}
		\end{equation}
		and
		\begin{equation}
			\label{eq:1.34}
			\begin{aligned}
			Z_{n} &= 2{\textbf{E}}\int_{0}^{T} \left( (F(v(s)), v(s))_{g} -(F(v(s)), u_{n}(s))_{g} -(F(u_{n}(s), v(s))_{g}) \right)e^{-r(s)}ds 
			\\& - {\textbf{E}}\int_{0}^{T} \dfrac{dr(s)}{dt}e^{-r(s)} (\| v(s)\|^{2}_{H_{g}} - 2(u_{n}(s),v(s))_{g})ds 
			\\&  +{\textbf{E}}\int_{0}^{T}e^{-r(s)} \{ \|\sigma_{n}(s,v(s))\|^{2}_{L_{Q}} - 2(\sigma_{n}(u_{n}(s)), \sigma_{n}(v(s))_{L_{Q}}) \}ds.
			\end{aligned}
		\end{equation}
		By It\^o formula, we obtain 
		\begin{equation}
			\label{eq:1.35}
			\begin{aligned}
			e^{-r(T)}\|u_{n}(T)\|^{2}_{g} - \|u_{n}(0)\|^{2}_{g} &=  2{\textbf{E}} \int_{0}^{T}e^{-r(s)}(F(u_{n}(s)), u_{n}(s))_{g}ds - {\textbf{E}} \int_{0}^{T}\frac{dr(s)}{dt}e^{-r(s)} \|u_{n}(s)\|^{2}_{g}ds 
			\\&+ {\textbf{E}} \int_{0}^{T}e^{-r(s)}\|\sigma_{n}(s,u_{n}(s))\|^{2}_{L_{Q}}ds + 2{\textbf{E}} \int_{0}^{T}e^{-r(s)}(f(s), u_{n}(s))_{g}ds.	
			\end{aligned}
		\end{equation}
		By the lower-semicontinuity of $\|u_n(T)\|^{2}_{g}$, we obtain 
		\begin{equation}
			\label{eq:1.36}
			\begin{aligned}
			\underset{n\to \infty}{\rm{lim}} \rm{inf}Y_{n} &= {\textbf{E}}e^{-r(T)}\|u_{n}(T)\|^{2}_{g} - \|u_{n}(0)\|^{2}_{g} \ge {\textbf{E}}e^{-r(T)}\|u_{n}(T)\|^{2}_{g} - \|u(0)\|^{2}_{g} 
			\\&=  2{\textbf{E}} \int_{0}^{T}e^{-r(s)}(F(u_{0}(s)), u(s))_{g}ds - {\textbf{E}} \int_{0}^{T}\frac{dr(s)}{dt}e^{-r(s)} \|u(s)\|^{2}_{g}ds 
		\\& + {\textbf{E}} \int_{0}^{T}e^{-r(s)}\|S(s)\|^{2}_{L_{Q}}ds + {\textbf{E}} \int_{0}^{T}e^{-r(s)}(S(s)dW(s), u(s))_{g}.
			\end{aligned}
		\end{equation}
		In \eqref{eq:1.34}, by taking the limit for each term, we obtain
		\begin{equation}
			\label{eq:1.37}
			\begin{aligned}
		\underset{n\to \infty}{\rm{lim}} Z_{n} &= 2{\textbf{E}}\int_{0}^{T} \left[ (F(v(s)), v(s))_{g} -(F(v(s)), u(s))_{g} -(F_{0}(s), v(s))_{g} \right] e^{-r(s)}ds 
		\\& - {\textbf{E}}\int_{0}^{T} \dfrac{dr(s)}{ds}e^{-r(s)} (\| v(s)\|^{2}_{{g}} - 2(u(s),v(s))_{g})ds 
		\\&  + {\textbf{E}}\int_{0}^{T}e^{-r(s)}  \left(\|\sigma(s,v(s))\|^{2}_{L_{Q}} - 2(S(s), \sigma(v(s))_{L_{Q}}) \right)ds.	
			\end{aligned}
		\end{equation}
		The monotonicity condition and $X_{n} = Y_{n} + Z_{n} \le 0$ imply
		\begin{equation}
			\label{eq:1.38}
			\begin{aligned}
				&2{\textbf{E}}\int_{0}^{T} e^{-r(s)}(F_{0}(s) - F(v(s)), u(s) - v(s))_{g} ds  - {\textbf{E}}\int_{0}^{T} \dfrac{dr(s)}{ds}e^{-r(s)} \| u(s) - v(s)\|^{2}_{g}ds 
				\\&+ {\textbf{E}}\int_{0}^{T}e^{-r(s)}\|S(s) - \sigma(s,v(s))\|^{2}_{L_{Q}}ds
				\\& \le \underset{n\to \infty}{\rm{lim}}\rm{inf}\, X_{n} \le 0.
			\end{aligned}
		\end{equation}
		Setting $v = u$ gives $S(s) = \sigma(s, u(s))$. Next, set $v = u - \lambda w$ with $\lambda > 0$ to obtain 
		\begin{equation}
			\label{eq:1.39}
			\begin{aligned}
				&2\lambda{\textbf{E}}\int_{0}^{T} e^{-r(s)}(F_{0}(s) - F(u(s) - \lambda w(s)), w(s))_{g} ds + \lambda^{2}{\textbf{E}}\int_{0}^{T} \dfrac{dr(s)}{ds}e^{-r(s)} \| w(s)\|^{2}_{g}ds
				\\& + \varepsilon L \lambda^{2}\int_{0}^{T}e^{-r(s)}\|w(s)\|^{2}_{V_{g}}  \le 0.	
			\end{aligned}
		\end{equation}
		Dividing by $\lambda$ on both sides of the inequality above, and letting $\lambda \to 0$, we can conclude
		\begin{equation}
			\label{eq:1.40}
			{\textbf{E}}\int_{0}^{T}(F_{0}(s) - F(u(s)), w(s))_{g}ds \le 0.
		\end{equation}
		Since $w$ is arbitrary, $F_{0}$ = $F(u(s))$. Thus the existence of a strong solution has been proved.
		\par
		We next prove pathwise uniqueness. Let $u_{1}$ and $u_{2}$ be two solutions  of the $g$-Navier--Stokes equation  \eqref{eq:1.23}. For $i = 1, 2$, and $N > 0$, define $\tau^{ i}_{ N} := \inf\{t \le T: \|u_{i} (t)\|_{g} \ge N \}$. Set $\tau_{N} = \min\{\tau^{1}  _{N}, \tau^{2}_{N}\}$. By Chebyshev's inequality, we obtain 
		\begin{equation}
			\label{eq:1.41}
			P\{\tau_{N}  < T \} \le P\{\max \{\underset{t\in [0,T]}{\rm{sup}} \|u_{1}(t) \|_{g}, \underset{t\in [0,T]}{\rm{sup}} \|u_{2}(t) \|_{g} \}  \ge N\} \le \dfrac{C}{N^{2}}.
		\end{equation}
		 Thus $\underset{N \to \infty}{\rm{lim}} \tau_{N}= T$ a.s. 
		\par
		By \eqref{eq:1.23} and It\^o formula, we have for $s \in [0, t \wedge \tau_{N}]$,
		\begin{equation}
			\label{eq:1.42}
			\begin{aligned}
		&\| u_{1}( t \wedge \tau_{N} ) - u_{2}( t \wedge \tau_{N} ) \|^{2}_{g} + 2\nu \int_{0}^{t \wedge \tau_{N}} \|u_{1}(s)- u_{2}(s)\|^{2}_{V_{g}}ds 
		\\& + 2\int_{0}^{t \wedge \tau_{N}} (B_{g}(u_{1}(s), u_{1}(s))- B_{g}(u_{2}(s), u_{2}(s)), u_{1}(s)- u_{2}(s))_{g} ds 
		\\& + 2\nu \int_{0}^{t \wedge \tau_{N}} (R_{g}(u_{1}(s)) - R_{g}(u_{2}(s)), u_{1}(s)- u_{2}(s))_{g} ds 
		\\& =    2 \sqrt{\varepsilon} \int_{0}^{t \wedge \tau_{N}} (u_{1}(s)- u_{2}(s), (\sigma(s,u_{1}(s))- \sigma(s,u_{2}(s)))  dW(s))_{g} 
		\\& + \varepsilon \int_{0}^{t \wedge \tau_{N}} \|\sigma(s,u_{1}(s))- \sigma(s,u_{2}(s))\|^{2}_{L_{Q}} ds.
			\end{aligned}
		\end{equation}
		We now estimate the third term on the left-hand side of \eqref{eq:1.42}. By \eqref{eq:2.3} and the bilinearity of $B_{g}$, we have
		\begin{equation}
			\label{eq:1.44}
			\begin{aligned}
			&\left| b_{g}(u_{1}, u_{1}, u_{1} - u_{2}) - b_{g}(u_{2}, u_{2}, u_{1} - u_{2}) \right| 
			\\&\le | b_{g}(u_{1}, u_{1}, u_{1} - u_{2}) - b_{g}(u_{2}, u_{1}, u_{1} - u_{2}) |
			 + |b_{g}(u_{2}, u_{1}, u_{1} - u_{2}) - b_{g}(u_{2}, u_{2}, u_{1} - u_{2})|
			\\& = |b_{g}(u_{1} - u_{2}, u_{1}, u_{1} - u_{2})| + |b_{g}(u_{2}, u_{1} - u_{2}, u_{1} - u_{2})|.
		\end{aligned}
		\end{equation}
		By Ladyzhenskaya inequality and H\"older inequality, we obtain
		\begin{equation}
			\label{eq:1.45}
			\begin{aligned}
			|b_{g}(u_{1}(s) - u_{2}(s), u_{1} ,u_{1}(s) - u_{2}(s) )| & \le \int_{\mathcal{O}}^{}  |u_{1}(s) - u_{2}(s)| |\nabla u_{1}||u_{1}(s) - u_{2}(s)| gdx 
			\\& \le \|u_{1}(s) - u_{2}(s)\|^{2}_{L^{4}}\|\nabla u_{1}\|_{g}
			\\& \le \frac{\sqrt{2M_{0}}}{m_{0}}\|u_{1}(s) - u_{2}(s)\|_{g}\|u_{1}(s) - u_{2}(s)\|_{V_{g}} \|u_{1}\|_{V_{g}}
			\\& \le \frac{15\nu }{8}\|u_{1}(s) - u_{2}(s)\|^{2}_{V_{g}} + \frac{8 M_{0}}{15\nu m^{2}_{0}}\|u_{1}(s) - u_{2}(s)\|^{2}_{{g}} \|u_{1}\|^{2}_{V_{g}}.
			\end{aligned}
		\end{equation}
		For the fourth term on the left-hand side of \eqref{eq:1.42},  by \eqref{eq:2.10}, we get
		\begin{equation}
			\label{eq:1.46}
			\begin{aligned}
		2\nu \int_{0}^{t \wedge \tau_{N}} (R_{g}(u_{1}(s)) -R_{g}(u_{2}(s)), u_{1}(s) - u_{2}(s))_{g}ds  
		& \le 2\nu \int_{0}^{t \wedge \tau_{N}}\frac{\|\nabla g\|_{\infty}}{m_{0}\sqrt{\lambda_{1}}} \|u_{1}(s) - u_{2}(s)\|^{2}_{V_{g}}ds
		\\&\le \frac{\nu}{8} \int_{0}^{t \wedge \tau_{N}} \|u_{1}(s) - u_{2}(s)\|^{2}_{V_{g}}ds.
			\end{aligned}
		\end{equation}
		For the last term on the right-hand side of \eqref{eq:1.42}, by condition (\textbf{A2}), we have 
		\begin{equation}
			\label{eq:1.43}
			\begin{aligned}
			\varepsilon \int_{0}^{t \wedge \tau_{N}} \|\sigma(s,u_{1}(s))- \sigma(s,u_{2}(s))\|^{2}_{L_{Q}} ds  \le \varepsilon L\, \int_{0}^{t \wedge \tau_{N}}  \|u_{1}(s) - u_{2}(s)\|^{2}_{g}ds. 
			\end{aligned}
		\end{equation}
		Substituting \eqref{eq:1.45} - \eqref{eq:1.43} into \eqref{eq:1.42}, and taking the expectation, we obtain
		\begin{equation}
			\label{eq:1.47}
			\begin{aligned}
	 \textbf{E} \| u_{1}( t \wedge \tau_{N} ) - u_{2}( t \wedge \tau_{N} ) \|^{2}_{g} 	\le \textbf{E} \int_{0}^{t \wedge \tau_{N}} \|u_{1}(s) - u_{2}(s)\|^{2}_{g} \left( \frac{8 M_{0}}{15 \nu m^{2}_{0}}\|u_{1} (s) \|^{2}_{V_{g}} + \varepsilon L  \right)ds. 
			\end{aligned}
		\end{equation}
		Finally, using Gronwall inequality we know that
		\begin{equation}
			\label{eq:4.58}
			\begin{aligned}
			\textbf{E} \| u_{1}( t \wedge \tau_{N} ) - u_{2}( t \wedge \tau_{N} ) \|^{2}_{g} = 0.
			\end{aligned}
		\end{equation}
		Taking $N \to \infty$ and applying Fatou’s lemma, the pathwise uniqueness can be proved. This proof is completed.
	\end{proof}
	\subsection{Weak convergence criterion for a large deviation principle}
\par
In this subsection, we recall the weak convergence method for the LDP of stochastic partial differential equations which will be used in this paper. Let $(\Omega, \mathcal{F}, \mathbb{P})$ be a probability space equipped with an increasing family $\{\mathcal{F}_{t} \}_{0\le t\le T}$ of sub $\sigma$-fields of $\mathcal{F}$ satisfying the usual conditions of right continuity and $\mathbb{P}$-completeness. \par
Given $M > 0$, denote by
\begin{equation}
	\label{eq:1.51}
	\begin{aligned}
	\mathcal{S}_{M} = \left\{h \in L^{2}(0,T; H_{0,g}) : \int_{0}^{T} \| h(t)\|^{2}_{0} dt \le M \right\}.
	\end{aligned}
\end{equation}
\par
Then, $\mathcal{S}_{M}$ is a Polish space endowed with the weak topology. Throughout this paper, we always assume $\mathcal{S}_{M}$ is equipped with the weak topology, unless otherwise stated.
\par
Let $\mathcal{A}$ be the space of all $H_{0,g}$-valued stochastic processes $h$ which are progressively measurable with respect to $\{\mathcal{F}_{t}\}_{t\in [0,T]}$ and $ \int_{0 }^{T} \| h(t) \|^{2}_{0} dt < \infty$ $\mathbb{P}$-almost surely. Denote $\mathcal{A}_{M} = \{h \in \mathcal{A} : h(\omega) \in \mathcal{S}_{M} \,\, for \,\, almost \,\,  all\,\, \omega \in \Omega \}$. 
\par
Let $\mathcal{E}$ be a Polish space. For any $\varepsilon > 0$, let $\mathcal{G}^{\varepsilon}: C([0, T], H_{g}) \to \mathcal{E}$ be a measurable map. Denote by
\begin{equation}
	\label{eq:1.52}
	\begin{aligned}
		u^{\varepsilon} = \mathcal{G}^{\varepsilon}(W(\cdot)), \, \forall\,\, \varepsilon > 0. 
	\end{aligned}
\end{equation}
\begin{defn}\label{drfn:1.8}(Rate function). 
	A function $I: \mathcal{E} \to [0, \infty]$ is called a rate function on $\mathcal{E}$ if it is lower semicontinuous in $\mathcal{E}$. A rate function $I$ on $\mathcal{E}$ is said to be a good rate function on $\mathcal{E}$ if for every $0 \le C < \infty$, the level set $\{x \in \mathcal{E}: I(x) \le C\}$ is a compact subset of $\mathcal{E}$.
\end{defn}
\begin{defn}\label{drfn:1.9}(Large Deviation Principle). 
	The family $\{u^{\varepsilon}\}$ is said to satisfy the LDP in $\mathcal{E}$ with a rate function $I: \mathcal{E} \to [0, \infty]$, if for every Borel subset $A$ of $\mathcal{E}$,
	\begin{equation}
		\label{eq:1.53}
		\begin{aligned}
			- \underset{x\in A^{\circ}}{\inf}\, I(x)  \le \underset{\varepsilon \to 0}{\lim\, \inf} \varepsilon\,  \log\mathbb{P} (u^{\varepsilon} \in A) \le \underset{\varepsilon \to 0}{\lim\, \sup}\,  \varepsilon\,  \log\mathbb{P}(u^{\varepsilon} \in A) \le - \underset{x\in \bar{A}}{\inf}\, I(x),
		\end{aligned}
	\end{equation}
	where $A^{\circ}$ and $\bar{A}$ are the interior and the closure of $A$ in $\mathcal{E}$, respectively.
\end{defn}
Since  $\mathcal{E}$ is a Polish space, the Laplace principle and the LDP are equivalent in  $\mathcal{E}$ (see \cite{ref A. Budhiraja1, ref P. Dupuis}). Next, we recall the concept of Laplace principle. 
\begin{defn}\label{drfn:1.10}(Laplace principle). The family $\{u^{\varepsilon}\}$ is said to satisfy the Laplace principle in $\mathcal{E}$ with a rate function $I: \mathcal{E} \to [0, \infty]$ if for all bounded and continuous $\phi:  \mathcal{E} \to \mathbb{R}$, 
	\begin{equation}
		\label{eq:1.54}
		\begin{aligned}
			\underset{\varepsilon \to 0}{\lim}\, \varepsilon\, \log \mathbb{E}\left[e^{-\frac{1}{\varepsilon}\phi(u^{\varepsilon})} \right] = -\underset{x\in \mathcal{E}}{\inf}\{\phi(x) + I(x)  \}.
		\end{aligned}
	\end{equation}
\end{defn}
In order to prove the LDP of $u^{\varepsilon}$, we will assume that the family $\{\mathcal{G}^{\varepsilon} \}$ fulfills the following conditions: there exists a measurable map $\mathcal{G}^{\varepsilon} : C([0,T], H_{g}) \to \mathcal{E}$ such that \par
(\textbf{H1}) For every $M > 0$, the set $\{\mathcal{G}^{\circ}(\int_{0}^{\cdot} h(t)dt) : h \in \mathcal{S}_{M} \}$ is a  compact subset of $\mathcal{E}$. \par
(\textbf{H2}) If $M > 0$ and $\{h^{\varepsilon} \} \subseteq \mathcal{A}_{M}$ such that $\{h^{\varepsilon} \}$ converges in distribution to $h$ as $\mathcal{S}_{M}$-valued stochastic variables, then $\mathcal{G}^{\varepsilon}(W(\cdot) + \varepsilon^{-\frac{1}{2}}\int_{0}^{.}h^{\varepsilon}(t)dt)$ converges in distribution to $\mathcal{G}^{\circ}( \int_{0}^{.}h(t)dt)$. \par
We recall the following theorem proved by Budhiraja and Dupuis \cite{ref P. Dupuis}, which will be used to establish the large deviation of  \eqref{eq:2.6} - \eqref{eq:2.7} .
\begin{prop}\label{prop:1.11}
	Under (\textbf{H1})-(\textbf{H2}), the family $\{u^{\varepsilon}\}$, given by \eqref{eq:1.52}, satisfies the Laplace principle in $\mathcal{E}$ with rate function $I$ defined by, for every $x \in \mathcal{E}$,
	\begin{equation}
		\label{eq:1.55}
		\begin{aligned}
		I(x) = \inf \left\{\frac{1}{2}   \int_{0}^{T}\| h(t)\|^{2}_{0}dt: h\in L^{2}(0,T; H_{0,g})  \quad such\,\, that \quad  \mathcal{G}^{\circ} ( \int_{0}^{.}h(t)dt) =x \right\},
		\end{aligned}
	\end{equation}
	where the infimum over an empty set is taken to be $\infty$.
\end{prop}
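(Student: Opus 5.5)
The plan is to derive the Laplace principle for $\{u^{\varepsilon}\}$ from the Bou\'e--Dupuis variational representation for functionals of the $Q$-Wiener process $W$, and to use (\textbf{H1}) for the properties of $I$ and (\textbf{H2}) for the two limit bounds. The representation states that for every bounded measurable $F:C([0,T];H_{g})\to\mathbb{R}$,
\[
-\log \mathbb{E}\left[e^{-F(W)}\right]=\inf_{h\in\mathcal{A}}\mathbb{E}\left[\frac12\int_{0}^{T}\|h(t)\|_{0}^{2}\,dt+F\Big(W+\int_{0}^{\cdot}h(t)\,dt\Big)\right].
\]
Fix a bounded continuous $\phi:\mathcal{E}\to\mathbb{R}$. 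Replacing $W$ by $\varepsilon^{-1/2}W$ and $h$ by $\varepsilon^{-1/2}h$, and taking $F=\varepsilon^{-1}\phi\circ\mathcal{G}^{\varepsilon}$ (measurable by assumption), gives
\[
-\varepsilon\log\mathbb{E}\left[e^{-\frac1\varepsilon\phi(u^{\varepsilon})}\right]=\inf_{h\in\mathcal{A}}\mathbb{E}\left[\frac12\int_{0}^{T}\|h\|_{0}^{2}\,dt+\phi\Big(\mathcal{G}^{\varepsilon}\big(W+\varepsilon^{-1/2}\textstyle\int_{0}^{\cdot}h\,dt\big)\Big)\right].
\]

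\textbf{Step 1: $I$ is a good rate function.} For $C\ge 0$, I will show
\[
\{I\le C\}=\bigcap_{n\ge1}\Big\{\mathcal{G}^{\circ}\big(\textstyle\int_{0}^{\cdot}h\,dt\big):h\in\mathcal{S}_{2C+2/n}\Big\}.
\]
Each set in this intersection is compact by (\textbf{H1}), so the intersection is compact. For the inclusion of the intersection in $\{I\le C\}$, pick for each $n$ a preimage $h_{n}\in\mathcal{S}_{2C+2/n}$ of the given point $x$. Weak compactness of $\mathcal{S}_{2C+2}$ yields a weak limit $h$ with $\frac12\int_{0}^{T}\|h\|_{0}^{2}\,dt\le C$, by weak lower semicontinuity of the norm. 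To conclude $\mathcal{G}^{\circ}(\int_{0}^{\cdot}h\,dt)=x$, I will first verify that $h\mapsto\mathcal{G}^{\circ}(\int_{0}^{\cdot}h\,dt)$ is continuous from $\mathcal{S}_{M}$ (weak topology) into $\mathcal{E}$. This follows from (\textbf{H2}) applied to deterministic controls $h_{n}\to h$: the noise term is then $\mathcal{G}^{\varepsilon}$ evaluated along a deterministic shift, so I will take the deterministic version of (\textbf{H2}), i.e. continuity of $\mathcal{G}^{\circ}$ along weakly convergent controls, which is implicit in the setting. The reverse inclusion is immediate from the definition of $I$ as an infimum. Compactness of the level sets then gives lower semicontinuity.

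\textbf{Step 2: Laplace upper bound,} $\liminf_{\varepsilon\to0}\big(-\varepsilon\log\mathbb{E}\,e^{-\phi(u^{\varepsilon})/\varepsilon}\big)\ge\inf_{x}\{\phi(x)+I(x)\}$. For each $\varepsilon$, choose an $\varepsilon$-optimal control $h^{\varepsilon}\in\mathcal{A}$. Since $\|\phi\|_{\infty}<\infty$, $\mathbb{E}\int_{0}^{T}\|h^{\varepsilon}\|_{0}^{2}\,dt\le 2\|\phi\|_{\infty}+1$.

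The main technical point is that (\textbf{H2}) requires controls lying in $\mathcal{A}_{M}$ almost surely, not just $L^{2}$-bounded in expectation. I will use a stopping-time truncation: stop $h^{\varepsilon}$ once $\int_{0}^{t}\|h^{\varepsilon}\|_{0}^{2}\,ds$ exceeds $M$. By Chebyshev, this changes the cost functional by at most $\|\phi\|_{\infty}(2\|\phi\|_{\infty}+1)/M$, which is small once $M$ is large.

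Since $\mathcal{S}_{M}$ is a compact Polish space, the truncated controls are tight. Along a subsequence they converge in distribution to some $h$, and by (\textbf{H2}),
\[
\mathcal{G}^{\varepsilon}\big(W+\varepsilon^{-1/2}\textstyle\int_{0}^{\cdot}h^{\varepsilon}\,dt\big)\Rightarrow\mathcal{G}^{\circ}\big(\textstyle\int_{0}^{\cdot}h\,dt\big).
\]
Fatou's lemma, together with weak lower semicontinuity of $h\mapsto\int_{0}^{T}\|h\|_{0}^{2}\,dt$ (applied via Skorokhod representation), bounds the $\liminf$ below by
\[
\mathbb{E}\Big[\tfrac12\int_{0}^{T}\|h\|_{0}^{2}\,dt+\phi\big(\mathcal{G}^{\circ}(\textstyle\int_{0}^{\cdot}h\,dt)\big)\Big]\ge\inf_{x}\{\phi(x)+I(x)\},
\]
where the last inequality uses the definition of $I$ pointwise in $\omega$. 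Letting $M\to\infty$ completes this step.

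\textbf{Step 3: Laplace lower bound.} Given $\delta>0$, pick $x_{0}$ with $\phi(x_{0})+I(x_{0})\le\inf\{\phi+I\}+\delta$, and a deterministic $h\in L^{2}(0,T;H_{0,g})$ with $\mathcal{G}^{\circ}(\int_{0}^{\cdot}h\,dt)=x_{0}$ and $\frac12\int_{0}^{T}\|h\|_{0}^{2}\,dt\le I(x_{0})+\delta$. Use this fixed $h$ (which lies in $\mathcal{A}_{M}$ for suitable $M$) in the representation. By (\textbf{H2}) with the constant sequence $h^{\varepsilon}=h$, and since $\phi$ is bounded and continuous,
\[
\limsup_{\varepsilon\to0}\big(-\varepsilon\log\mathbb{E}\,e^{-\phi(u^{\varepsilon})/\varepsilon}\big)\le\tfrac12\int_{0}^{T}\|h\|_{0}^{2}\,dt+\phi(x_{0})\le\inf\{\phi+I\}+2\delta.
\]
Letting $\delta\to0$ and combining with Step 2 gives \eqref{eq:1.54}. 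The equivalence with the LDP in the Polish space $\mathcal{E}$ then follows as recalled above.

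I expect the truncation in Step 2 to be the main obstacle. Everything else is soft compactness plus the representation formula itself, which I will take from \cite{ref A. Budhiraja1, ref P. Dupuis}, rather than reprove, since proving it requires relative-entropy duality and Girsanov's theorem.
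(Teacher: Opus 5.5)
Your proposal is correct: it is the standard Budhiraja--Dupuis argument (variational representation, stopping-time truncation into $\mathcal{A}_M$, tightness on the compact space $\mathcal{S}_M$, and (\textbf{H2}) for both Laplace bounds), and the paper does not prove the proposition at all but imports exactly this result by citing Budhiraja and Dupuis. One simplification in Step 1: the inclusion $\bigcap_{n}\{\mathcal{G}^{\circ}(\int_0^{\cdot}h\,dt):h\in\mathcal{S}_{2C+2/n}\}\subseteq\{I\le C\}$ is immediate, because any such $x$ satisfies $I(x)\le C+1/n$ for every $n$, so you do not need continuity of $h\mapsto\mathcal{G}^{\circ}(\int_0^{\cdot}h\,dt)$ on $\mathcal{S}_M$, which you only asserted as ``implicit'' without proving it.
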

The main result of this paper is given below.
\begin{thm}
	\label{thm:1.12}
	Suppose that (\textbf{A1})--(\textbf{A3}) hold and $f \in L^{2}(0,T; V^{*}_{g})$. Let $u^{\varepsilon}$ denote the solution of \eqref{eq:2.6}. Then the family $\{u^{\varepsilon}\}$, as $\varepsilon \to 0$, satisfies the LDP in $C([0, T]; H_{g}) \cap L^{2}(0,T; V_{g})$ with the good rate function as given by \eqref{eq:1.55}.
\end{thm}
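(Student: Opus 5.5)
The plan is to verify (\textbf{H1}) and (\textbf{H2}) for the solution map and then invoke Proposition \ref{prop:1.11}. This gives the Laplace principle, hence the LDP, in $\mathcal{E}=C([0,T];H_{g})\cap L^{2}(0,T;V_{g})$, which is Polish with the sum norm. By Theorem \ref{thm:1.8} and Yamada--Watanabe there is a measurable $\mathcal{G}^{\varepsilon}$ with $u^{\varepsilon}=\mathcal{G}^{\varepsilon}(W)$.

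For $h\in L^{2}(0,T;H_{0,g})$ I would consider the skeleton (control) equation
\begin{equation*}
\frac{du_{h}}{dt}+\nu A_{g}u_{h}+B_{g}(u_{h},u_{h})+\nu R_{g}u_{h}=f+\sigma(t,u_{h})h,\qquad u_{h}(0)=u_{0},
\end{equation*}
and set $\mathcal{G}^{\circ}(\int_{0}^{\cdot}h\,dt)=u_{h}$. The first step is well-posedness of this equation in $\mathcal{E}$. I would use a Galerkin scheme together with the energy estimate. The term $(\sigma(t,u_{h})h,u_{h})_{g}$ is bounded by
\begin{equation*}
\|\sigma\|_{L_{Q}}\|h\|_{0}\|u_{h}\|_{g}\le \frac{\nu}{4}\|u_{h}\|^{2}_{V_{g}}+C\|h\|^{2}_{0}\bigl(1+\|u_{h}\|^{2}_{g}\bigr),
\end{equation*}
using (\textbf{A2}). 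The $R_{g}$ term is absorbed via \eqref{eq:1.2} exactly as in \eqref{eq:4.52}, and Gronwall then yields
\begin{equation*}
\sup_{h\in\mathcal{S}_{M}}\Bigl(\sup_{t}\|u_{h}(t)\|^{2}_{g}+\int_{0}^{T}\|u_{h}\|^{2}_{V_{g}}\,dt\Bigr)\le C_{M}.
\end{equation*}
Uniqueness and stability follow from the difference estimate as in \eqref{eq:1.45}, using the Ladyzhenskaya bound \eqref{eq:1.16} and (\textbf{A3}).

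For (\textbf{H1}) I will show that $h_{n}\rightharpoonup h$ weakly in $\mathcal{S}_{M}$ implies $u_{h_{n}}\to u_{h}$ strongly in $\mathcal{E}$. This is the main obstacle, because $V_{g}\hookrightarrow H_{g}$ is not compact on the unbounded domain $\mathcal{O}$. On each bounded subdomain $\mathcal{O}_{k}=\mathcal{O}\cap\{|x|<k\}$, the uniform bounds together with a bound on $du_{h_{n}}/dt$ in $L^{2}(0,T;V^{*}_{g})$ give, via Aubin--Lions, strong convergence in $L^{2}(0,T;L^{2}(\mathcal{O}_{k}))$. The tail estimate
\begin{equation*}
\sup_{n}\sup_{t\le T}\int_{|x|\ge k}|u_{h_{n}}|^{2}g\,dx\to 0 \qquad (k\to\infty)
\end{equation*}
is where the stream function is needed. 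Cutting off $u$ directly destroys the condition $\nabla\cdot(gu)=0$ and introduces the pressure. Instead, I would write $gu=\nabla^{\perp}\psi$ and derive the scalar equation for $\psi$, in which the pressure disappears. I would then test with $\rho(|x|^{2}/k^{2})\psi$ for a smooth cutoff $\rho$. The commutator terms are $O(1/k)$ times the uniform energy bounds, and the control term is handled by Cauchy--Schwarz in $h$ with $\int\|h_{n}\|^{2}_{0}\le M$. Together these give $L^{2}(0,T;H_{g})$ compactness.

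To upgrade to convergence in $\mathcal{E}$, let $w_{n}=u_{h_{n}}-u_{h}$ and apply the energy equality for $w_{n}$. The nonlinear and $R_{g}$ terms are controlled as in the uniqueness proof. The remaining cross term is
\begin{equation*}
\int_{0}^{t}\bigl(\sigma(s,u_{h})(h_{n}-h),w_{n}\bigr)_{g}\,ds
\end{equation*}
together with a Lipschitz remainder. The Lipschitz remainder is absorbed by Gronwall, using the $L^{2}(0,T;H_{g})$ convergence already obtained. The cross term tends to zero by weak convergence of $h_{n}$ combined with strong convergence of $w_{n}$. Compactness of $\mathcal{S}_{M}$ then gives (\textbf{H1}).

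For (\textbf{H2}), Girsanov's theorem shows that $v^{\varepsilon}=\mathcal{G}^{\varepsilon}(W+\varepsilon^{-1/2}\int_{0}^{\cdot} h^{\varepsilon}\,dt)$ solves the controlled stochastic equation with extra drift $\sigma(t,v^{\varepsilon})h^{\varepsilon}$. Using Skorokhod's representation, I may assume $h^{\varepsilon}\to h$ a.s. in $\mathcal{S}_{M}$. I would then estimate $v^{\varepsilon}-u_{h^{\varepsilon}}$ by It\^o's formula up to stopping times that localize $\int\|u_{h^{\varepsilon}}\|^{2}_{V_{g}}$; these are uniform in $\varepsilon$ by the deterministic bound. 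The It\^o correction and the martingale part are $O(\varepsilon)$ by (\textbf{A2}), BDG, and the moment bounds of Proposition \ref{prop:1.22} adapted to the control term. This gives $v^{\varepsilon}-u_{h^{\varepsilon}}\to0$ in probability in $\mathcal{E}$. Combined with $u_{h^{\varepsilon}}\to u_{h}$ a.s. from the (\textbf{H1}) argument, this yields the required convergence in distribution and completes the proof.
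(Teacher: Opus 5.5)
Your architecture matches the paper's: (\textbf{H1}) via continuity of $h\mapsto u_h$ from $\mathcal{S}_M$ (weak topology) into $\mathcal{E}$, using local compactness, a stream-function tail estimate, and an energy estimate for $w_n=u_{h_n}-u_h$; (\textbf{H2}) via Girsanov, an It\^o estimate for the difference, and continuity of the skeleton map. Using Aubin--Lions on bounded pieces instead of the paper's Arzel\`a--Ascoli step is an inessential variation. The step that fails is the tail estimate, at the control term. Since $\nabla^{\perp}(\rho_k\psi)=\rho_k\,g\,u+\psi\nabla^{\perp}\rho_k$, testing the vorticity equation with $\rho_k\psi$ brings in the control through $\int_{\mathcal O}\rho_k\,(\sigma(t,u)h)\cdot u\,g\,dx$, plus an $O(1/k)$ term. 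Cauchy--Schwarz in $h$ bounds the main part by $\|\sqrt{\rho_k}\,\sigma(t,u)h\|_g\|\sqrt{\rho_k}\,u\|_g$. However, (\textbf{A2})--(\textbf{A3}) only give $\|\sqrt{\rho_k}\,\sigma(t,u)h\|_g\le\sqrt{K}(1+\|u\|_{V_g})\|h\|_0$. They are global bounds in $L_Q$ and do not control $\|\mathbf{1}_{\{|x|\ge k\}}\sigma(t,u)\|_{L_Q}$ uniformly over bounded sets of $u$. After integrating in time, this term is of size $C(M)\sup_t\|\sqrt{\rho_k}u(t)\|_g$ with $C(M)$ independent of $k$. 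Gronwall therefore gives only $\sup_{n,t}\|\sqrt{\rho_k}u_{h_n}(t)\|_g\le C(M)$, not smallness as $k\to\infty$, so you do not get global $L^2(0,T;H_g)$ compactness. (The paper's \eqref{eq:3.94} closes only by using (\textbf{A3}) as if it were a pointwise bound $|\sigma(t,u)|\lesssim|u|$, which is an extra locality hypothesis on $\sigma$.)

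The simplest repair is to drop the tail estimate on $u_{h_n}$ and put the cutoff on $\sigma$ inside your upgrade step. There the only term needing compactness is $I_n(t)=\int_0^t(\sigma(s,u_h(s))(h_n-h),w_n)_g\,ds$, and $s\mapsto\sigma(s,u_h(s))$ is a fixed element of $L^2(0,T;L_Q)$.

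\emph{Tail part.} Each $\sigma(s,u_h(s))Q^{1/2}$ is Hilbert--Schmidt, so dominated convergence gives $\int_0^T\|(1-\chi_R)\sigma(s,u_h(s))\|_{L_Q}^2ds\to0$ for a spatial cutoff $\chi_R$. Hence the $(1-\chi_R)$-part of $I_n$ is small uniformly in $n$ and $t$, using $\sup_{n,t}\|w_n(t)\|_g<\infty$ and $\|h_n-h\|_{L^2(0,T;H_{0,g})}\le 2\sqrt{M}$.

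\emph{Local part.} The family $\{\chi_R w_n\}$ is precompact in $L^2(0,T;L^2(\mathcal O))$ by your local compactness argument. Under (\textbf{A2}), $\sigma(t,u)h$ lies only in $L^1(0,T;H_g)$, so $\partial_t u_{h_n}$ is bounded in $L^1(0,T;V_g^*)$ rather than $L^2$. You therefore need Simon's version of Aubin--Lions together with an Ehrling inequality $\|\chi_R u\|_{L^2}\le\eta\|u\|_{V_g}+C_{\eta,R}\|u\|_{V_g^*}$. Along a subsequence $\chi_R w_n\to z_R$. Then $\sup_t|\int_0^t(\sigma(s,u_h)(h_n-h),\chi_R w_n)_g\,ds|\to0$, by two facts:
\begin{itemize}
\item weak convergence of $h_n$ tested against $\sigma(\cdot,u_h)^*z_R\in L^2(0,T;H_{0,g})$, uniform in $t$ by equicontinuity;
\item dominated convergence for $\int_0^T\|\sigma(s,u_h)\|_{L_Q}^2\|\chi_R w_n-z_R\|_g^2\,ds$.
\end{itemize}

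\emph{Conclusion.} Gronwall then gives $w_n\to0$ in $\mathcal E$ directly. This uses no stream function and never passes to the limit in $\sigma(t,u_{h_n})h_n$, which with (\textbf{A3}) in the $V_g$-norm would require strong $L^2(0,T;V_g)$ convergence. The rest of your plan, including (\textbf{H2}), is sound and parallels the paper.
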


\section{The LDP of 2D stochastic $g$-Navier--Stokes equations}
\indent
\par
In this section, we prove the LDP of \eqref{eq:2.6}-\eqref{eq:2.7} in $C([0, T]; H_{g}) \cap L^{2}(0,T; V_{g})$  by verifying (\textbf{H1}) and (\textbf{H2}) in Proposition \ref{prop:1.11}. As usual, to study the LDP of $u^{\varepsilon}$ as $\varepsilon \to 0$, we need to consider the controlled system corresponding to \eqref{eq:2.6}. Given a control $v \in L^{2}(0, T; H_{0,g})$, the controlled equation is as follows:
\begin{equation}
	\label{eq:1.56}
	\begin{aligned}
		\begin{split}
			\left \{
			\begin{array}{ll}
					&du_{v}(t) + (\nu A_{g}u_{v}(t) + B_{g}(u_{v}(t), u_{v}(t)) + \nu R_{g}u_{v}(t))dt = f(t)dt + \sigma(t,u_{v}(t))v(t)dt,
					\\& u_{v}(0) = u_{0} \in H_{g}.
			\end{array}
			\right.
		\end{split}
	\end{aligned}
\end{equation}
By a solution $u_{v}$ of \eqref{eq:1.56}, we mean that $u_{v} \in C([0, T]; H_{g})$ such that for all $t \in [0, T]$ and $\eta \in H_{g}$,
\begin{equation}
	\label{eq:3.35}
	\begin{aligned}
		(u_{v}(t), \eta)_{g} &+ \int_{0}^{t}(\nu A_{g}u_{v}(s) + B_{g}(u_{v}(s), u_{v}(s)) + \nu R_{g}u_{v}(s), \eta)_{g}ds = (u_{0}, \eta)_{g} 
		\\& + \int_{0}^{t}(f(s),\eta)_{g}ds + \int_{0}^{t}(\sigma(s,u_{v}(s))v(s), \eta)_{g}ds.
	\end{aligned}
\end{equation}
The existence and uniqueness of solutions of  \eqref{eq:1.56} are given below.
\begin{lem}
	\label{lem:1.13}
	Suppose that $f \in L^{4}(0,T; V^{*}_{g})$  and (\textbf{A1})-(\textbf{A3}) hold. Then for every $v\in L^{2}(0,T; H_{0,g})$ problem  \eqref{eq:1.56}  has a unique solution $u_{v} \in C([0, T], H_{g}) \cap L^{2}( 0,T; V_{g})$.\par 
	Moreover, for any $\|u_{0}\|_{{g}} \vee \|u_{0,1}\|_{{g}} \vee \|u_{0,2}\|_{{g}} \le R_{1}$ and $\|v\|_{L^{2}(0,T; H_{0,g})}$ $ \vee \|v_{1}\|_{L^{2}(0,T; H_{0,g})} \vee \|v_{2}\|_{L^{2}(0,T; H_{0,g})}$ $\le R_{2}$, the solutions  $u_{v}, u_{v_{1}}$, and $u_{v_{2}}$ of \eqref{eq:1.56} with initial data $u_{0}, u_{0,1}$, and $u_{0,2}$, respectively, satisfy for all $t \in [0, T]$,
	\begin{equation}
		\label{eq:1.84}
		\begin{aligned}
		\| u_{v}(t)\|^{2}_{g} + \nu \int_{0}^{t}\| u_{v}(s)\|^{2}_{V_{g}} ds \le C_{1}, 
		\end{aligned}
	\end{equation}
	\begin{equation}
		\label{eq:1.85}
		\begin{aligned}
			\| u_{v_{1}}(t)  - u_{v_{2}}(t) \|^{2}_{g} + \nu \int_{0}^{t}\| u_{v_{1}}(t)  - u_{v_{2}}(t) \|^{2}_{V_{g}} ds  \le C_{1} \left(\|u_{0,1} - u_{0,2}\|^{2}_{g} + \|v_{1} - v_{2}\|^{2}_{L^{2}(0,T; H_{0,g})} \right), 
		\end{aligned}
	\end{equation}
	where $R_{1}, R_{2} > 0$ are given, and $C_{1} > 0$ depends on $R_{1}, R_{2}$, and $T$.
\end{lem}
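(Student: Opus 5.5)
The plan is to treat \eqref{eq:1.56} as a deterministic analogue of Theorem \ref{thm:1.8}. The control term $\sigma(t,u)v(t)$ plays the role of the noise. The basic pointwise tool is
\[
\|\sigma(t,u)v\|_{g}\le \|\sigma(t,u)\|_{L_{Q}}\|v\|_{0},
\]
which holds because the operator norm from $H_{0,g}$ to $H_g$ is dominated by the Hilbert--Schmidt norm. Combined with (\textbf{A2}) and (\textbf{A3}) it gives
\[
|(\sigma(t,u)v,u)_{g}|\le \sqrt{K}\,(1+\|u\|_{V_{g}})\|v\|_{0}\|u\|_{g}\le \frac{\nu}{4}\|u\|^{2}_{V_{g}}+C\|v\|_{0}^{2}\|u\|^{2}_{g}+C\|v\|_{0}^{2}+\|u\|^{2}_{g},
\]
and similarly
\[
|((\sigma(t,u)-\sigma(t,w))v,u-w)_{g}|\le \frac{\nu}{8}\|u-w\|^{2}_{V_{g}}+C\|v\|_{0}^{2}\|u-w\|^{2}_{g}.
\]
Since $\|v\|_{0}^{2}\in L^{1}(0,T)$ with $\int_0^T\|v\|_0^2\le R_2^2$, these terms can always be handled by Gronwall's lemma.

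\emph{A priori bound \eqref{eq:1.84}.} First I work with the Galerkin system on $H_n$, exactly as in \eqref{eq:1.20} but without noise, and take the inner product with $u_v$. Then:
\begin{itemize}
\item $b_g(u_v,u_v,u_v)=0$ by \eqref{eq:2.3};
\item the $R_g$ term is bounded by $\frac{\nu}{8}\|u_v\|^{2}_{V_g}$ using \eqref{eq:2.11} and \eqref{eq:1.2};
\item the forcing satisfies $2(f,u_v)_{g}\le \frac{8}{\nu}\|f\|^{2}_{V^*}+\frac{\nu}{8}\|u_v\|^{2}_{V_g}$;
\item the control term is handled by the first display above.
\end{itemize}
Gronwall's lemma with coefficient $C(1+\|v(t)\|_{0}^{2})$ then gives \eqref{eq:1.84}. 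The constant depends only on $R_1$, $R_2$, $T$ and $\|f\|_{L^2(0,T;V^*_g)}$, uniformly in $n$. The same bound will pass to the limit by lower semicontinuity.

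\emph{Existence.} From the uniform bounds, $u_n\rightharpoonup u$ in $L^2(0,T;V_g)$ and weak-$*$ in $L^\infty(0,T;H_g)$. Also $F(u_n)\rightharpoonup F_0$ in $L^{2}(0,T;V_g^*)$, using \eqref{eq:2.8} and Lemma \ref{lem:3.65}, and $\sigma(\cdot,u_n)v\rightharpoonup S$ in $L^{1}(0,T;H_g)$ or a suitable weighted space. Because the domain is unbounded, Aubin--Lions is not available. I therefore identify the limits by the local monotonicity (Minty) argument of Theorem \ref{thm:1.8}, now pathwise and deterministic. Fix a test function $w\in L^\infty(0,T;H_m)$ and set
\[
r(t)=\frac{27}{\nu^{3}}\int_0^t\|w\|^{4}_{L^4}\,ds+C\int_0^t\|v(s)\|_{0}^{2}\,ds,
\]
with the extra $v$-term chosen to absorb the second display above. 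Proposition \ref{prop:1.4} (with $\varepsilon=0$) plus that estimate gives
\[
\int_0^T e^{-r}\Big[2\big(F(u_n)-F(w)+(\sigma(u_n)-\sigma(w))v,\;u_n-w\big)_{g}-r'\|u_n-w\|^{2}_{g}\Big]\,dt\le 0 .
\]
Next I use the energy identity for $u_n$ together with Proposition \ref{prop:1.7} for the limit equation. I pass to the $\liminf$ using lower semicontinuity of $e^{-r(T)}\|u_n(T)\|_g^2$. Then I take $w=u-\lambda\tilde w$, divide by $\lambda$ and let $\lambda\to0$. This yields $F_0+S=F(u)+\sigma(u)v$, so $u$ solves \eqref{eq:3.35}. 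Continuity $u\in C([0,T];H_g)$ follows from Proposition \ref{prop:1.7}.

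\emph{Stability \eqref{eq:1.85} and uniqueness.} Let $w=u_{v_1}-u_{v_2}$. By \eqref{eq:1.44}--\eqref{eq:1.45}, with the Young constants adjusted so that only $\frac{\nu}{4}\|w\|_{V_g}^2$ is consumed, the nonlinear term is at most $\frac{\nu}{4}\|w\|^{2}_{V_g}+C\|u_{v_1}\|^{2}_{V_g}\|w\|^{2}_{g}$. The $R_g$ term costs $\frac{\nu}{8}\|w\|^{2}_{V_g}$. I split the control term as
\[
(\sigma(u_{v_1})-\sigma(u_{v_2}))v_1+\sigma(u_{v_2})(v_1-v_2).
\]
The first piece is bounded by $\frac{\nu}{8}\|w\|_{V_g}^2+C\|v_1\|_0^2\|w\|_g^2$. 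The second is bounded by $\|v_1-v_2\|_{0}^{2}+K(1+\|u_{v_2}\|^{2}_{V_g})\|w\|^{2}_{g}$. The Gronwall coefficient $C(\|u_{v_1}\|^{2}_{V_g}+\|u_{v_2}\|^{2}_{V_g}+\|v_1\|_0^2+1)$ is integrable with bound depending only on $R_1,R_2,T$ by \eqref{eq:1.84}. This gives \eqref{eq:1.85}, and uniqueness is the case $v_1=v_2$, $u_{0,1}=u_{0,2}$.

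The main obstacle is the existence step. The coefficient $\sigma$ grows in the $V_g$-norm, and there is no compactness on the unbounded domain. So both the nonlinear limit and the control term $\sigma(u_n)v$ must be identified through monotonicity alone, and the choice of $r(t)$ including $\int\|v\|_0^2$ is what makes this work.
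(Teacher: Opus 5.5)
Your proposal is correct. For the bounds \eqref{eq:1.84}--\eqref{eq:1.85} you follow the paper's energy/Gronwall route, but your treatment of the control is the sound one. The paper's \eqref{eq:1.87} and \eqref{eq:1.93} do three things the hypotheses do not allow: they treat $\|v(t)\|_0$ as pointwise bounded by $\|v\|_{L^2(0,T;H_{0,g})}$; they treat $\|\sigma(t,u_{v_2}(t))\|_{L_Q}$ as bounded by a constant; and they use (\textbf{A3}) with the $H_g$-norm. You instead pair $\|v(t)\|_0^2$ with $\|u\|_g^2$, and you put $\|v(t)\|_0^2$ and $K(1+\|u_{v_2}(t)\|_{V_g}^2)$ into an integrable Gronwall coefficient, which is all the hypotheses permit.

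The real difference is existence. The paper defers it to a citation (Da Prato--Zabczyk, Theorem 7.10). You give a self-contained Galerkin plus local-monotonicity argument, folding $\sigma(t,u)v(t)$ into the drift and augmenting $r$ by $C\int_0^t\|v\|_0^2\,ds$. No smallness condition is needed, since there is no It\^o correction. This buys something genuine. Because $\sigma$ is Lipschitz only in the $V_g$-norm and you only have weak convergence in $L^2(0,T;V_g)$, even local compactness on bounded subdomains could not identify $\lim\sigma(\cdot,u_n)v$. Minty's argument identifies the sum $F_0+S$ directly.

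Two routine points need care when you write it out.
\begin{enumerate}
\item The limit $S$ lies only in $L^1(0,T;H_g)$. Weak compactness there follows from uniform integrability, since $\int_E\|\sigma(u_n)v\|_g\,dt\le C\|v\|_{L^2(E;H_{0,g})}$. You therefore need the version of Proposition \ref{prop:1.7} that allows $u'\in L^2(0,T;V_g^*)+L^1(0,T;H_g)$ for $u\in L^\infty(0,T;H_g)\cap L^2(0,T;V_g)$.
\item The monotonicity inequality must first be extended by density from $w\in L^\infty(0,T;H_m)$ to $w\in L^\infty(0,T;H_g)\cap L^2(0,T;V_g)$, with Ladyzhenskaya keeping $r(T)<\infty$. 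Only then may you take $w=u-\lambda\tilde w$.
\end{enumerate}
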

\begin{proof}
Under conditions (\textbf{A1})--(\textbf{A3}), by the argument of [\cite{ref Da Prato3}, Theorem 7.10], one can show that for every $T > 0$, $v \in L^{2}(0, T; H_{0,g})$ and $u_{0} \in H_{g}$, problem \eqref{eq:1.56}  has a unique solution $u_{v}\in C([0, T], H_{g}) \cap L^{2}( 0,T; V_{g})$. 
\par 
Next, we prove \eqref{eq:1.84}. By \eqref{eq:2.10}, \eqref{eq:1.56}, we get
\begin{equation}
	\label{eq:1.86}
	\begin{aligned}
	d\| u_{v}(t)\|^{2}_{g} + 2\nu \|u_{v}(t)\|^{2}_{V_{g}}dt \le \frac{\nu }{8} \|u_{v}(t)\|^{2}_{V_{g}}dt + 2(f(t), u_{v}(t))_{g} dt
	 +   2(\sigma(t,  u_{v}(t))v(t),  u_{v}(t))_{g}dt.
	\end{aligned}
\end{equation}
For the second term on the right-hand side of \eqref{eq:1.86}, by H\"older inequality, we obtain
\begin{equation}
	\label{eq:4.60}
	\begin{aligned}
	2\left|(f(t), u_{v}(t))_{g}  \right| \le \dfrac{\nu }{8}\|u_{v}(t)\|^{2}_{V_{g}} + \frac{8}{\nu} \|(f(t)\|^{2}_{V^{*}}. 
	\end{aligned}
\end{equation}
For the last term on the right-hand side of \eqref{eq:1.86}, by (\textbf{A2}) and  Young's inequality, we find
\begin{equation}
	\label{eq:1.87}
	\begin{aligned}
	2(\sigma(t,  u_{v}(t))v(t),  u_{v}(t))_{g} &= 2\int_{\mathcal{O}}^{}\sigma(t,  u_{v}(t))v(t) \cdot u_{v}(t)\,gdx 
	\\& \le \frac{3\nu }{4KR_{2}}\|\sigma(t,  u_{v}(t))\|^{2}_{L_{Q}} \|v(t)\|^{2}_{L^{2}(0,T; H_{0,g})} + \frac{4KR_{2}}{3\nu }\|u_{v}(t)\|^{2}_{g}
	\\& \le \frac{3\nu }{4}\|u_{v}(t)\|^{2}_{V_{g}} + \frac{3\nu }{4} + \frac{4KR_{2}}{3\nu }\|u_{v}(t)\|^{2}_{{g}}. 
	\end{aligned}
\end{equation}
Combining \eqref{eq:4.60}- \eqref{eq:1.87}, we get for any $t \in [0,T]$,
\begin{equation}
	\label{eq:1.88}
	\begin{aligned}
		d\| u_{v}(t)\|^{2}_{g}  + \nu \|u_{v}(t)\|^{2}_{V_{g}}dt \le \frac{4KR_{2}}{3\nu }\|u_{v}(t)\|^{2}_{{g}}dt + \frac{3\nu }{4} + \dfrac{8}{\nu}\|f(t)\|^{2}_{V^{*}}dt.
	\end{aligned}
\end{equation}
From \eqref{eq:1.88} and Gronwall  inequality, it follows that for any $t \in [0,T]$,
\begin{equation}
	\label{eq:1.89}
	\begin{aligned}
		\| u_{v}(t)\|^{2}_{g} \le \|u_{0}\|^{2}_{g}e^{\frac{4KR_{2}T}{3\nu }} + e^{\frac{4KR_{2}T}{3\nu }} \int_{0}^{T} \left(  \dfrac{8}{ \nu} \|f(s)\|^{2}_{V^{*}} + \frac{3\nu}{4} \right)ds.
	\end{aligned}
\end{equation}
\par 
By \eqref{eq:1.89} we infer that for every $R_{1} > 0$ and $R_{2} > 0$ , there exists $\textbf{c}_{1} = \textbf{c}_{1}(R_{2}, K, T,\nu  ) > 0$ such that for any $u_{0} \in H_{g}$ with $\|u_{0}\|_{g} \le R_{1}$ and any $v \in L^{2}(0, T ; H_{0,g})$ with $\|v\|_{L^{2}(0, T ; H_{0,g})} \le  R_{2}$, the solution $u_{v}$ satisfies, for all $t \in [0, T]$,
\begin{equation}
	\label{eq:4.40}
	\begin{aligned}
		\| u_{v}(t)\|^{2}_{g} \le \textbf{c}_{1}, \forall t\in[0,T].
	\end{aligned}
\end{equation}
\par 
Integrating \eqref{eq:1.88} over $[0, T ]$ and using \eqref{eq:4.40}, we find a constant $\textbf{c}_{2} = \textbf{c}_{2}(R_{1}, R_{2}, K, T,\nu) > 0$ such that for all $u_{0}\in H_{g}$ with $\|u_{0}\|_{g} \le R_{1}$ and any $v \in L^{2}(0, T ; H_{0,g})$ with $\|v\|_{L^{2}(0, T ; H_{0,g})} \le  R_{2}$, 
\begin{equation}
	\label{eq:4.41}
	\begin{aligned}
	\nu \int_{0}^{t}\|u_{v}(s)\|^{2}_{V_{g}}ds \le \textbf{c}_{2}. 
	\end{aligned}
\end{equation}
Then \eqref{eq:1.84} follows from \eqref{eq:4.40}-\eqref{eq:4.41}.
\par
Next, we prove \eqref{eq:1.85}. Let $u_{v_{1}}$ and $u_{v_{2}}$ be the solutions of \eqref{eq:1.56} with initial data $u_{0,1}$ and $u_{0,2}$, respectively. By \eqref{eq:1.84}, we find that if $ \|u_{0,1}\|_{{g}} \vee \|u_{0,2}\|_{{g}} \le R_{1}$ and $\| v_{1}\|_{L^{2}(0,T;H_{0,g})} \vee \| v_{2}\|_{L^{2}(0,T;H_{0,g})} \le  R_{2}$, then
\begin{equation}
	\label{eq:1.91}
	\begin{aligned}
		\| u_{v_{1}}(t)\|_{g}  + \| u_{v_{2}}(t)\|_{g} \le 2 C_{1},\,\, \forall t\in[0,T],
	\end{aligned}
\end{equation}
where $C_{1} = C_{1}(R_{1}, R_{2}, T) > 0$. Similarly, by \eqref{eq:2.10}, \eqref{eq:1.56}, we obtain
\begin{equation}
	\label{eq:1.92}
	\begin{aligned}
		& d\| u_{v_{1}}(t) - u_{v_{2}}(t)\|^{2}_{g} + 2\nu \|u_{v_{1}}(t) - u_{v_{2}}(t)\|^{2}_{V_{g}}dt 
		\\& + (B_{g}(u_{v_{1}}(t), u_{v_{1}}(t)) -B_{g}(u_{v_{2}}(t), u_{v_{2}}(t)), u_{v_{1}}(t) - u_{v_{2}}(t))_{g}dt \\&\le \dfrac{\nu }{8} \|u_{v_{1}}(t) - u_{v_{2}}(t)\|^{2}_{V_{g}} + 2(\sigma(t, u_{v_{1}}(t))v_{1}(t)-\sigma(t, u_{v_{2}}(t))v_{2}(t),  u_{v_{1}}(t) - u_{v_{2}}(t))_{g}dt.
	\end{aligned}
\end{equation}
For the third term on the left-hand side of \eqref{eq:1.92}, by \eqref{eq:2.3}, we get
\begin{equation}
	\label{eq:2.50}
	\begin{aligned}
    &\left| (B_{g}(u_{v_{1}}(t), u_{v_{1}}(t)) -B_{g}(u_{v_{2}}(t), u_{v_{2}}(t)), u_{v_{1}}(t) - u_{v_{2}}(t))_{g} \right|
    \\& \le  \left|b_{g}(u_{v_{1}}(t), u_{v_{1}}(t), u_{v_{1}}(t) - u_{v_{2}}(t))  -b_{g}(u_{v_{1}}(t), u_{v_{1}}(t), u_{v_{2}}(t) - u_{v_{2}}(t))\right|
    \\& + \left|b_{g}(u_{v_{1}}(t), u_{v_{2}}(t), u_{v_{1}}(t) - u_{v_{2}}(t))  -b_{g}(u_{v_{2}}(t), u_{v_{2}}(t), u_{v_{1}}(t) - u_{v_{2}}(t))\right|
    \\& \le \left|b_{g}(u_{v_{1}}(t), u_{v_{1}}(t)- u_{v_{2}}(t)) , u_{v_{1}}(t) - u_{v_{2}}(t)) \right| +  \left|b_{g}(u_{v_{1}}(t) - u_{v_{2}}(t), u_{v_{2}}(t) , u_{v_{1}}(t) - u_{v_{2}}(t)) \right|
    \\& = \left|b_{g}(u_{v_{1}}(t) - u_{v_{2}}(t), u_{v_{2}}(t) , u_{v_{1}}(t) - u_{v_{2}}(t)) \right|.
	\end{aligned}
\end{equation}
By Ladyzhenskaya inequality and H\"older inequality, we obtain
\begin{equation}
	\label{eq:2.51}
	\begin{aligned}
	& \left|b_{g}(u_{v_{1}}(t) - u_{v_{2}}(t), u_{v_{2}}(t) , u_{v_{1}}(t) - u_{v_{2}}(t)) \right| 
	\\& \le \|u_{v_{1}}(t) - u_{v_{2}}(t)\|^{2}_{L^{4}}\|\nabla u_{v_{2}}(t)\|_{g}. 
	\\& \le \frac{\sqrt{2 M_{0}}}{m_{0}} \|u_{v_{1}}(t) - u_{v_{2}}(t)\|_{g}\|u_{v_{1}}(t) - u_{v_{2}}(t)\|_{V_{g}}\|\nabla u_{v_{2}}(t)\|_{g}
	\\& \le \frac{7\nu}{8} \|u_{v_{1}}(t) - u_{v_{2}}(t)\|^{2}_{V_{g}} + \frac{8M_{0}}{7\nu m^{2}_{0}}\|u_{v_{1}}(t) - u_{v_{2}}(t)\|^{2}_{g} \| u_{v_{2}}(t)\|^{2}_{V_{g}}. 
	\end{aligned}
\end{equation}
For the last term on the right-hand side of \eqref{eq:1.92}, by (\textbf{A2}) and (\textbf{A3}) we get for any $t\in[0,T]$,
\begin{equation}
	\label{eq:1.93}
	\begin{aligned}
		& 2\left|(\sigma(t, u_{v_{1}}(t))v_{1}(t)-\sigma(t, u_{v_{2}}(t))v_{2}(t),  u_{v_{1}}(t) - u_{v_{2}}(t))_{g}\right|
		\\& =    2 \left|(\sigma(t, u_{v_{1}}(t))v_{1}(t)-\sigma(t, u_{v_{2}}(t))v_{1}(t),  u_{v_{1}}(t) - u_{v_{2}}(t))_{g}\right| 
		\\& + 2 \left| (\sigma(t, u_{v_{2}}(t))v_{1}(t)-\sigma(t, u_{v_{2}}(t))v_{2}(t),  u_{v_{1}}(t) - u_{v_{2}}(t))_{g}\right|
		\\& \le 2\| \sigma(t, u_{v_{1}}(t))-\sigma(t, u_{v_{2}}(t))\|_{L_{Q}}\|v_{1}(t)\|_{L^{2}(0,T; H_{0,g})} \|u_{v_{1}}(t) - u_{v_{2}}(t)\|_{g} 
		\\&+ 2\| \sigma(t, u_{v_{2}}(t))\|_{L_{Q}}\|v_{1}(t) - v_{2}(t)\|_{L^{2}(0,T ; H_{0,g})} \|u_{v_{1}}(t) - u_{v_{2}}(t)\|_{g}
		\\& \le 2\sqrt{L}\|v_{1}(t)\|_{L^{2}(0,T ; H_{0,g})} \|u_{v_{1}}(t) - u_{v_{2}}(t)\|^{2}_{g} + c_{3} \|v_{1}(t) - v_{2}(t)\|^{2}_{L^{2}(0,T; H_{0,g})} 
		\\& + c_{3}\|u_{v_{1}}(t) - u_{v_{2}}(t)\|^{2}_{g},
	\end{aligned}
\end{equation}
where $c_{3} = \sqrt{K( 1+ C^{2}_{1})}$. Therefore, by \eqref{eq:2.50}- \eqref{eq:1.93}, we find that for any $t \in [0,T]$, 
\begin{equation}
	\label{eq:1.94}
	\begin{aligned}
		&d\| u_{v_{1}}(t) - u_{v_{2}}(t)\|^{2}_{g} + \nu \int_{0}^{t}\| u_{v_{1}}(s)  - u_{v_{2}}(s) \|^{2}_{V_{g}} ds
		\\& \le  \left( 2\sqrt{L}\|v_{1}(t)\|_{L^{2}(0,T ; H_{0,g})} + \frac{8M_{0} C_{1}}{7 \nu m^{2}_{0}} + c_{3} \right) \|u_{v_{1}}(t) - u_{v_{2}}(t)\|^{2}_{g} + c_{3}\|v_{1}(t) - v_{2}(t)\|^{2}_{L^{2}(0,t ; H_{0,g})}.
	\end{aligned}
\end{equation}
From \eqref{eq:1.94} and Gronwall’s inequality, we obtain that for any $t \in [0,T]$, 
\begin{equation}
	\label{eq:1.95}
	\begin{aligned}
		 \| u_{v_{1}}(t) - u_{v_{2}}(t)\|^{2}_{g}   
		 & \le  e^{\int_{0}^{T}\left( 2 \sqrt{L}\|v_{1}(s)\|_{L^{2}(0,t ; H_{0,g})} + \frac{8M_{0} C_{1}}{7 \nu m^{2}_{0}}  + c_{3}  \right)ds} \|u_{0,1} - u_{0,2}\|^{2}_{g} 
		 \\&+ c_{3}e^{\int_{0}^{T}\left( 2 \sqrt{L}\|v_{1}(s)\|_{L^{2}(0,t ; H_{0,g})} +\frac{8M_{0} C_{1}}{7 \nu m^{2}_{0}} +c_{3}  \right)ds}\int_{0}^{t}\|v_{1}(s) - v_{2}(s)\|^{2}_{L^{2}(0,t ; H_{0,g})}ds
		 \\& \le \textbf{c}_{3} \left(\|u_{0,1} - u_{0,2}\|^{2}_{g} + \|v_{1} - v_{2}\|^{2}_{L^{2}(0,T; H_{0,g})} \right), 
	\end{aligned}
\end{equation}
where $\textbf{c}_{3} = \textbf{c}_{3}(R_{1}, R_{2},\nu, T,K) > 0$. By \eqref{eq:1.94} - \eqref{eq:1.95}, we get
\begin{equation}
	\label{eq:4.42}
	\begin{aligned}
	\nu \int_{0}^{t}\| u_{v_{1}}(s)  - u_{v_{2}}(s) \|^{2}_{V_{g}} ds \le \textbf{c}_{4} \left(\|u_{0,1} - u_{0,2}\|^{2}_{g} + \|v_{1} - v_{2}\|^{2}_{L^{2}(0,T; H_{0,g})} \right),
	\end{aligned}
\end{equation}
where $\textbf{c}_{4} = \textbf{c}_{4}(R_{1}, R_{2},\nu, T,K) > 0$, which along with \eqref{eq:1.95} yields \eqref{eq:1.85}.
\end{proof}

Next, we derive the uniform estimates for the tails of segments of solutions by using scalar stream function as in \cite{ref Wang. B4}. These estimates will   be crucial for addressing the non-compactness of Sobolev embeddings on unbounded domains. 
\par 
Let $\varrho : \mathbb{R}  \to  \mathbb{R}$ be a smooth function such that $0 \le \varrho \le 1$ and 
\[ 
	\varrho(s) = 0  \quad for \quad  |s| \le \dfrac{1}{2}, \quad and \quad \varrho(s) = 1 \quad for \quad |s| \ge  1.
\]
Note that there exists $C > 0$ such that $|\varrho'(s)|+|\varrho''(s)|\le C$ for all $s \in \mathbb{R}$.
Given  $k \in \mathbb{N} $ and $y = (y_{1}, y_{2}) \in \mathcal{O}$,  let $\varrho_{k}(x) = \varrho( \frac{x_{1}}{k})$.
Next, we introduce a scalar stream function for \eqref{eq:1.1} as in \cite{ref Wang. B4}. Given $u = (u_{1}, u_{2}) \in V_{g} $ and $x = (x_{1}, x_{2}) \in \overline{\mathcal{O}}$ define
\begin{equation}
	\label{eq:3.65}
	\begin{aligned}
	\tilde{u}(x) = \tilde{u}(x_{1}, x_{2}) = \int_{(0,0)}^{(x_{1}, x_{2})} -u_{2} \, gdx_{1} + u_{1}\, gdx_{2}.
	\end{aligned}
\end{equation}
Since $u = 0$ on $\partial \mathcal{O}$ and $\nabla \cdot (gu) = 0$ in $\mathcal{O}$, the integral in \eqref{eq:3.65} is independent of the path of integration. Hence, we have
\begin{equation}
	\label{eq:3.66}
	\begin{aligned}
		\partial_{ x_{1}}\tilde{u} = -gu_{2} ,\,\quad \partial_{ x_{2}}\tilde{u} = gu_{1}. 
	\end{aligned}
\end{equation}
and
\begin{equation}
	\label{eq:3.69}
	\begin{aligned}
		\tilde{u}\vert_{\partial \mathcal{O}} = 0 ,\,\quad \nabla \tilde{u}\vert_{\partial \mathcal{O}} = 0. 
	\end{aligned}
\end{equation}
Let $\mathcal{L}$ be the curl operator given by
\begin{equation}
	\label{eq:3.70}
	\begin{aligned}
		\mathcal{L} u &= \partial_{ x_{2}}u_{1} - \partial_{ x_{1}}u_{2} = \nabla \cdot (\frac{1}{g}\nabla\tilde{u} ) \triangleq  \mathcal{L}_{g}\tilde{u}. 
	\end{aligned}
\end{equation}
Applying the curl operator to the controlled equation \eqref{eq:1.56} and using \eqref{eq:3.66} and \eqref{eq:3.70}, we obtain
\begin{equation}
	\label{eq:3.71}
	\begin{aligned}
	\partial_{t}  \mathcal{L}_{g}\tilde{u} - \nu \Delta \mathcal{L}_{g}\tilde{u} +  \tilde{B_{g}}(\tilde{u}, \tilde{u}) = \sqrt{\varepsilon} \tilde{\sigma}(t,u)v(t), 
	\end{aligned}
\end{equation}
where 
\[\tilde{B_{g}}(\tilde{u}, \tilde{u}) = -\partial_{ x_{2}}[(\frac{1}{g}\partial_{ x_{1}} \tilde{u} ) (\mathcal{L}_{g}\tilde{u} )] + \partial_{ x_{1}}[(\frac{1}{g}\partial_{ x_{2}} \tilde{u} ) (\mathcal{L}_{g}\tilde{u} )]\,\, , \,\tilde{\sigma}(t,u) = \mathcal{L}\sigma(t,u). 
\]
\par 
For convenience, for every $k \in \mathbb{N},$ denote by $\mathcal{O}_{k} = (-k, k) \times (0, d)$. By using equation \eqref{eq:3.71} for the stream function, we can derive the uniform estimates on the tails of the solutions of the controlled equation on $\mathcal{O} \backslash \mathcal{O}_{k}$, which is given below.
\begin{lem}
	\label{lem:3.57}
	Suppose that (\textbf{A1})--(\textbf{A3}) hold. Then for every $\varepsilon'> 0$, $T > 0$, $R > 0$, and $u_{0} \in H_{g}$, there exists $K^{*} = K^{*}(\alpha, \lambda_{1}, T, u_{0}, R_{3}, \varepsilon') > 0$ with $\alpha < \frac{\nu \lambda_{1} - 2\sqrt{\varepsilon L}R_{3}M_{0}}{\nu }$ such that for all $v \in L^{2}(0, T ; H_{0,g})$ with $\| v \|_{L^{2}(0, T ; H_{0,g})} \le R_{3}$, the solution $u_{v}$ of \eqref{eq:1.56} satisfies, for all $k \ge K^{*}$ and $t \in [0,T]$,
	\begin{equation}
		\label{eq:1.96}
		\begin{aligned}
		\int_{|x|\ge k}^{} |u_{v}(t,x)|^{2} dx  < \varepsilon'. 
		\end{aligned}
	\end{equation}
\end{lem}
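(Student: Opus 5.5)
We will follow the stream-function approach of \cite{ref Wang. B4}: the tail estimate is derived for the scalar equation \eqref{eq:3.71} rather than for the velocity field. The reason is that multiplying \eqref{eq:1.56} by $\varrho_k^2 u_v$ would bring back the pressure term, since $\varrho_k u_v$ no longer satisfies $\nabla\cdot(g\,\cdot)=0$.

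\textbf{Energy identity for the truncated stream function.} Write $u=u_v$ and let $\tilde u$ be its stream function \eqref{eq:3.65}. Because the domain has bounded cross-section $(0,d)$ in $x_2$, we may work with $\varrho_k(x)=\varrho(x_1/k)$. We multiply \eqref{eq:3.71} by $-\varrho_k^2\tilde u$ and integrate over $\mathcal O$, using $\tilde u=\nabla\tilde u=0$ on $\partial\mathcal O$ from \eqref{eq:3.69}.

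\emph{Time derivative.} By \eqref{eq:3.70} this term becomes
\[
\frac12\frac{d}{dt}\int_{\mathcal O}\frac{\varrho_k^2}{g}|\nabla\tilde u|^2dx
\]
plus a commutator term $\int \frac1g\nabla\tilde u\cdot\nabla(\varrho_k^2)\,\partial_t\tilde u$. Since $|\nabla\varrho_k|\le C/k$, this commutator is $O(1/k)$.

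\emph{Viscous term.} Integrating by parts twice, $-\nu\Delta\mathcal L_g\tilde u$ tested against $\varrho_k^2\tilde u$ gives
\[
\nu\int\varrho_k^2\,\mathcal L_g\tilde u\,\Delta\tilde u\,dx
\]
plus terms containing at least one derivative of $\varrho_k$, each bounded by $\frac{C}{k}\big(\|u\|_{V_g}^2+\|u\|_g^2\big)$.

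\emph{Dissipation.} Since $\mathcal L u$ is the vorticity and $|\nabla\tilde u|^2=g^2|u|^2$, the leading viscous term controls $\int\varrho_k^2|\nabla u|^2$ up to $g$-derivative errors. Those errors are absorbed using $\|\nabla g\|_\infty\le\frac1{16}m_0\sqrt{\lambda_1}$ from \eqref{eq:1.2}. The Poincar\'e inequality \eqref{eq:2.2}, applied to $\varrho_k u$ (whose extra gradient term is again $O(1/k)$), then gives the coercive lower bound
\[
\nu\lambda_1\int\varrho_k^2|u|^2g\,dx .
\]

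\textbf{Nonlinear and forcing terms.} After integrating by parts, $\tilde B_g$ tested with $\varrho_k^2\tilde u$ gives an exact cancellation of the leading part, just as $b_g(u,v,v)=0$ in \eqref{eq:2.3}. What remains are terms containing $\nabla\varrho_k$. Using Lemma \ref{lem:1.2} / Remark \ref{rem:1.3}, these are bounded by
\[
\frac Ck\|u\|_{L^4}^2\|u\|_{V_g}\le\frac Ck\|u\|_g\|u\|_{V_g}^2 .
\]
The control term is estimated by (\textbf{A3}) and Young's inequality (we use the growth/Lipschitz bound for $\sigma$ and bound the cutoff part of $\sigma(u)$ by the cutoff part of $u$). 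This produces
\[
2\sqrt{\varepsilon L}\,M_0\|v\|_0\int\varrho_k^2|u|^2g\,dx+\frac Ck(\cdots).
\]
This is exactly the origin of the constraint $\alpha<\big(\nu\lambda_1-2\sqrt{\varepsilon L}R_3M_0\big)/\nu$ in the statement. The $f$-term is handled in the same way, with a tail of $f$ appearing.

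\textbf{Gronwall and conclusion.} Set $y_k(t)=\int\frac{\varrho_k^2}{g}|\nabla\tilde u|^2dx\simeq\int\varrho_k^2|u|^2$. The steps above give
\[
y_k'+\alpha\nu\, y_k\le \frac Ck\,h(t)+\text{(tail of } f),
\]
where $h\in L^1(0,T)$ has a bound depending only on $R_1,R_3,T$ through \eqref{eq:1.84}. Gronwall yields
\[
y_k(t)\le e^{-\alpha\nu t}y_k(0)+\frac Ck\int_0^Th(s)\,ds+\text{tail}.
\]
Now choose $K^*$ large enough that $y_k(0)=\int\varrho_k^2|u_0|^2<\varepsilon'/3$ (possible since $u_0\in H_g$) and that the remaining terms are each below $\varepsilon'/3$. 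Since $\{|x|\ge k\}\subset\{|x_1|\ge k\}$ in a channel of bounded width, \eqref{eq:1.96} follows.

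\textbf{Main obstacle.} The hardest step is the bookkeeping in the viscous and time-derivative terms. The weight $1/g$ inside $\mathcal L_g$ produces commutators such as $\nabla(1/g)\cdot\nabla\tilde u$ that carry no factor $1/k$. These must be absorbed into the dissipation using the smallness of $\|\nabla g\|_\infty$. We will first try to keep all such terms in the form $\frac{\|\nabla g\|_\infty}{m_0\sqrt{\lambda_1}}\int\varrho_k^2|\nabla u|^2g$, as in Lemma \ref{lem:3.65}, so that they cost at most $\nu/8$ of the dissipation.
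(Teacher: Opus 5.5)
You follow the paper's route: test the stream-function equation \eqref{eq:3.71} with $-\varrho_k^2\tilde u$, use the exact cancellation $u\cdot\nabla\tilde u=0$ in the nonlinear term, absorb the $\nabla g$ errors via \eqref{eq:1.2}, and close with Gronwall. Your unweighted pairing is actually an improvement. The paper's $g$-weighted pairing in \eqref{eq:3.90} produces a $\nabla g$-commutator without a factor $1/k$, namely the term $\frac18\|u_{v_t}\|\|u_v\|$ in \eqref{eq:3.97}, which is independent of $k$ and cannot vanish.

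\textbf{Gap 1: the time-derivative commutator.} The claim that this commutator is $O(1/k)$, with $h$ controlled through \eqref{eq:1.84}, is not justified. With your $\varrho_k=\varrho(x_1/k)$, the commutator is
\[
\int_{\mathcal O}\frac{\tilde u}{g}\,\nabla(\varrho_k^2)\cdot\nabla\partial_t\tilde u\,dx=-\int_{\mathcal O}\tilde u\,\partial_{x_1}(\varrho_k^2)\,\partial_t u_2\,dx .
\]
Your version differs from this only by an exact time derivative of an $O(1/k)$ quantity, so it has the same problem. It is $\partial_t u$ tested against the field $g^{-1}\tilde u\,(0,-\partial_{x_1}\varrho_k^2)$, which vanishes on $\partial\mathcal O$ but is not $g$-divergence free.
\begin{itemize}
\item Estimate \eqref{eq:1.84} gives no bound on $\partial_t u_v$.
\item The equation only yields $\partial_t u_v\in L^2(0,T;V_g^*)+L^1(0,T;H_g)$. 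A $V_g^*$-functional cannot be tested against a non-solenoidal field without bringing the pressure back.
\item For general $u_0\in H_g$ you cannot expect $\partial_t u_v\in L^1(0,T;H_g)$; this already fails for the linear Stokes flow.
\end{itemize}
The paper covers this term with an external bound on $\int_0^t\|\partial_t u_v\|\|u_v\|\,ds$ (Lemma 2.2 of the cited work of Wang), and you need an ingredient of that kind. One clean fix:
\begin{itemize}
\item Argue first for $u_0\in V_g$ and $f\in L^2(0,T;H_g)$.
\item There the $V_g$-energy estimate, using $\|\sigma(t,u)v\|_g\le\sqrt{K(1+\|u\|_{V_g}^2)}\,\|v(t)\|_0$ and Gronwall, gives $\partial_t u_v\in L^2(0,T;H_g)$ uniformly for $\|v\|_{L^2(0,T;H_{0,g})}\le R_3$.
\item The commutator is then bounded by $\frac Ck\|u_v\|\,\|\partial_t u_v\|$.
\item Pass to general data with \eqref{eq:1.85} and its analogue in $f$, both of which are uniform in $v$.
\end{itemize}

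\textbf{Gap 2: the control term.} This gap is shared with the paper's \eqref{eq:3.94}. Hypotheses (\textbf{A2})--(\textbf{A3}) control $\|\sigma(t,u)\|_{L_Q}$ only through the global norm $\|u\|_{V_g}$. They say nothing about where in $\mathcal O$ the field $\sigma(t,u)v$ lives. So ``bounding the cutoff part of $\sigma(u)$ by the cutoff part of $u$'' is an extra hypothesis, not a consequence of (\textbf{A3}). Without it, the best available bound is $\bigl|\int_{\mathcal O}\varrho_k^2\,\sigma(t,u)v\cdot u\,g\,dx\bigr|\le\sqrt{K(1+\|u\|_{V_g}^2)}\,\|v(t)\|_0\,y_k^{1/2}$, which gives no decay in $k$. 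You must state a locality assumption explicitly. For example, take $\|\varrho_k\sigma(t,u)\|^2_{L_Q}\le\delta_k(1+\|u\|^2_{V_g})+L\bigl(\|\varrho_k\nabla u\|_g^2+\|\varrho_k u\|_g^2\bigr)$ with $\delta_k\to0$, where $\varrho_k\sigma(t,u)$ is viewed as an operator into $\mathbb{L}^2_g(\mathcal O)$.

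\textbf{A smaller point: the decay rate.} The bound $\|v\|_{L^2(0,T;H_{0,g})}\le R_3$ does not give $\|v(t)\|_0\le R_3$ for a.e.\ $t$, so the uniform rate $\alpha\nu$ is not available. Keep $\|v(t)\|_0$ (or its square) in the Gronwall exponent instead. On $[0,T]$ this costs only a factor depending on $R_3$ and $T$. The restriction on $\alpha$ then becomes unnecessary, as does the factor $\sqrt{\varepsilon}$, which does not occur in \eqref{eq:1.56}.
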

\begin{proof}
	Let $\varrho : \mathbb{R}^{n}  \to  [0, 1]$ be a smooth function satisfying
	\begin{equation}
		\label{eq:1.97}
		\begin{aligned}
			\varrho(x) = 0  \quad for \quad  |x| \le \dfrac{1}{2}, \quad and \quad \varrho(x) = 1 \quad for \quad |x| \ge  1.
		\end{aligned}
	\end{equation}
	Note that there exists $C > 0$ such that $|\varrho'(s)|+|\varrho''(s)|\le C$ for all $s \in \mathbb{R}$. Set $\varrho_{k}(x) =  \varrho(\frac{x}{k})$ for $k \in \mathbb{N}$. Multiplying \eqref{eq:3.71} by $- \varrho^{2}_{k}\tilde{u}_{v}$ and integrating this result over $\mathcal{O}$, we obtain
	\begin{equation}
		\label{eq:1.98}
		\begin{aligned}
		-(\partial_{t}  \mathcal{L}_{g}\tilde{u}_{v}, \varrho^{2}_{k}\tilde{u}_{v})_{g} = - (\nu \Delta \mathcal{L}_{g}\tilde{u}_{v}, \varrho^{2}_{k}\tilde{u}_{v})_{g} +  (\tilde{B_{g}}(\tilde{u}_{v}, \tilde{u}_{v}),\varrho^{2}_{k}\tilde{u}_{v} )_{g}- (\sqrt{\varepsilon} \tilde{\sigma}(t,u_{v})v(t),\varrho^{2}_{k}\tilde{u}_{v})_{g}.
		\end{aligned}
	\end{equation}
	For the left-hand side of \eqref{eq:1.98} we have
	\begin{equation}
		\label{eq:3.90}
		\begin{aligned}
			-(\partial_{t}  \mathcal{L}_{g}\tilde{u}_{v}, \varrho^{2}_{k}\tilde{u}_{v})_{g} &= - \int_{\mathcal{O}}^{}\partial_{t} (\mathcal{L}_{g}\tilde{u}_{v})(\varrho^{2}_{k}\tilde{u}_{v}) gdx = -\int_{\mathcal{O}}^{}\partial_{t} (\frac{1}{g} \Delta \tilde{u}_{v} + \nabla(\frac{1}{g})\cdot \tilde{u}_{v} )(\varrho^{2}_{k}\tilde{u}_{v}) gdx 
			\\&= \int_{\mathcal{O}}^{} \nabla (\partial_{t}\tilde{u}_{v})\cdot (\nabla \varrho^{2}_{k}\tilde{u}_{v})dx - \int_{\mathcal{O}}^{} \partial_{t}(\nabla(\frac{1}{g})\cdot\nabla \tilde{u}_{v})\cdot (\varrho^{2}_{k}\tilde{u}_{v}) gdx
			\\& = \int_{\mathcal{O}}^{} \nabla (\partial_{t}\tilde{u}_{v})\cdot ( \varrho^{2}_{k}\nabla\tilde{u}_{v}  + 2\varrho_{k}\tilde{u}_{v}\nabla \varrho_{k} )dx + \int_{\mathcal{O}}^{} \partial_{t}((\frac{\nabla g}{g^{2}})\cdot\nabla \tilde{u}_{v})\cdot (\varrho^{2}_{k}\tilde{u}_{v}) gdx 
			\\& \ge \frac{1}{2}\frac{d}{dt} \int_{\mathcal{O}}^{}\varrho^{2}_{k}|\nabla \tilde{u}_{v}|^{2}dx - \frac{2}{k} \|\nabla \tilde{u}_{v_{t}}\| \|\tilde{u}_{v}\| - \frac{1}{m_{0}} \|\nabla \tilde{u}_{v_{t}}\| \|\tilde{u}_{v}\| \|\nabla g\|_{\infty}.
		\end{aligned}
	\end{equation}
	For the first term on the right-hand side of \eqref{eq:1.98}, by H\"older inequality we have
	\begin{equation}
		\label{eq:3.91}
		\begin{aligned}
			-(\nu \Delta \mathcal{L}_{g}\tilde{u}_{v}, \varrho^{2}_{k}\tilde{u}_{v})_{g} &= -\nu \int_{\mathcal{O}}^{} \mathcal{L}_{g} \tilde{u}_{v}\cdot (\varrho^{2}_{k} \Delta\tilde{u}_{v} + \tilde{u}_{v}(\Delta \varrho^{2}_{k}) + 2\nabla \varrho^{2}_{k}\cdot \nabla \tilde{u}_{v} ) g dx
			\\& \le -\nu \|\varrho_{k} \Delta \tilde{u}_{v}\|^{2}  + \frac{\nu }{m_{0}} \|\varrho_{k} \nabla \tilde{u}_{v}\|\|\nabla g\|_{\infty}\|\| \varrho_{k} \Delta \tilde{u}_{v}\|^{2} - \frac{\nu}{k} \|\Delta \tilde{u}_{v}\|\|\tilde{u}_{v}\| 
			\\& + \frac{\nu}{k^{2}m_{0}}\|\nabla \tilde{u}_{v}\|\|\tilde{u}_{v}\|\|\nabla g\|_{\infty} - \frac{4\nu }{k}\|\Delta \tilde{u}_{v}\|\| \varrho_{k} \nabla \tilde{u}_{v}\| + \frac{4\nu}{m_{0}k} \|\nabla \tilde{u}_{v}\|\|\varrho_{k}\nabla \tilde{u}_{v}\|\|\nabla g\|_{\infty}
			\\& \le - \frac{\nu }{2} \|\varrho_{k} \Delta \tilde{u}_{v}\|^{2} + \frac{\nu^{2}\lambda_{1} }{512} \|\varrho_{k} \nabla \tilde{u}_{v}\|^{2} + \frac{2\nu C}{k^{2}}\|\Delta \tilde{u}_{v}\|\|\tilde{u}_{v}\|
			\\& + \frac{\nu \lambda^{1/2}_{1}}{16k^{2} m_{0}} \|\nabla \tilde{u}_{v}\|\|\tilde{u}_{v}\| + \frac{4\nu}{k}\|\Delta \tilde{u}_{v}\|\|\nabla \tilde{u}_{v}\| + \frac{\nu \lambda_{1}}{k}\|\nabla \tilde{u}_{v}\|^{2}.
		\end{aligned}
	\end{equation}
	Using (\cite{ref Wang. B4} , Lemma 3.1) we obtain
	\begin{equation}
		\label{eq:4.82}
		\begin{aligned}
			\lambda_{1} \|\varrho_{k} \nabla \tilde{u}\|^{2} &\le \|\varrho_{k} \Delta \tilde{u}\|^{2} + \dfrac{c^{2}_{3}}{k^{2}}\|\tilde{u}\|_{H^{1}(\mathcal{O})}	 + \frac{2c_{3}}{k} \|\varrho_{k} \Delta \tilde{u}\|\|\tilde{u}\|_{H^{1}(\mathcal{O})}
			\\& \le \frac{2\lambda_{1}}{2\lambda_{1} -\alpha}\|\varrho_{k} \Delta \tilde{u}\|^{2}  + \dfrac{2\lambda_{1} c^{2}_{3}}{\alpha k^{2}}\|\tilde{u}\|_{H^{1}(\mathcal{O})}.
		\end{aligned}
	\end{equation}
	By \eqref{eq:3.91} and \eqref{eq:4.82} we obtain
	\begin{equation}
		\label{eq:3.92}
		\begin{aligned}
			-(\nu \Delta \mathcal{L}_{g}\tilde{u}_{v}, \varrho^{2}_{k}\tilde{u}_{v})_{g} \le  \frac{\nu}{2}(\frac{1}{2}\alpha - \lambda_{1})\|\varrho_{k} \nabla \tilde{u}_{v}\|^{2} +  \frac{\nu \lambda^{1/2}_{1}c_{4}}{k} \|\Delta\tilde{u}_{v}\|^{2}  ,
		\end{aligned}
	\end{equation}
	where $c_{4} =c_{4}(C, m_{0}) > 0$ is independent of $k$. \par 
	For the second term on the right-hand side of \eqref{eq:1.98}, by the divergence formula, Ladyzhenskaya, H\"older, and Young's inequalities we have
	\begin{equation}
		\label{eq:3.93}
		\begin{aligned}
			 (\tilde{B}_{g}(\tilde{u}_{v},\tilde{u}_{v}), \varrho^{2}_{k}\tilde{u}_{v} )_{g} &= \int_{\mathcal{O}}^{}\left( -\partial_{ x_{2}}[ \frac{1}{g}\partial_{ x_{1}}\tilde{u}_{v} (\mathcal{L}_{g}\tilde{u}_{v})] +\partial_{ x_{1}}[ \frac{1}{g}\partial_{ x_{2}}\tilde{u}_{v} (\mathcal{L}_{g}\tilde{u}_{v})]   \right)\varrho^{2}_{k}\tilde{u}_{v}gdx
			\\& = - \int_{\mathcal{O}}^{} \frac{1}{g} \partial_{ x_{2}}\tilde{u}_{v}\cdot \Delta \tilde{u}_{v}\cdot \tilde{u}_{v} \cdot \partial_{ x_{1}}(\varrho^{2}_{k})dx  - \int_{\mathcal{O}}^{} \nabla(\frac{1}{g})  \partial_{ x_{2}} \tilde{u}_{v}\cdot \nabla\tilde{u}_{v}\cdot \tilde{u}_{v} \cdot \partial_{ x_{1}}(\varrho^{2}_{k})dx
			\\& \le \frac{2C}{m_{0}k} \|\Delta \tilde{u}_{v}\|\|\tilde{u}_{v}\|_{L^{4}}\|\partial _{x_{2}}\tilde{u}_{v}\|_{L^{4}(\mathcal{O})} + \frac{\lambda^{1/2}_{1} C }{8m_{0}k}\|\nabla \tilde{u}_{v}\|\|\tilde{u}_{v}\|_{L^{4}(\mathcal{O})}\|\partial _{x_{2}}\tilde{u}_{v}\|_{L^{4}(\mathcal{O})}
			\\& \le \frac{2\sqrt{2}C}{m_{0}k \lambda_{1}} \|\Delta \tilde{u}_{v}\|^{3/2} \|\nabla \tilde{u}_{v}\|^{3/2} + \frac{2^{1/4}}{8m_{0}k} \|\nabla \tilde{u}_{v}\|^{5/2}\|\Delta \tilde{u}_{v}\|^{1/2}
			\\& \le \frac{ c_{5}}{m_{0}k} \left(2\|\Delta \tilde{u}_{v}\|^{2} +  \|\nabla \tilde{u}_{v}\|^{6}  + \|\nabla \tilde{u}_{v}\|^{10/3}   \right)
			\\& \le \frac{ c_{6}}{m_{0}k} \left(\|\Delta \tilde{u}_{v}\|^{2} +  \|\nabla \tilde{u}_{v}\|^{6} \right), 
		\end{aligned}
	\end{equation}
	where $c_{6} = c_{6}(C, \lambda_{1}) > 0$ is independent of $k$. The final step uses the relevant Sobolev embedding to control the term $\|\nabla \tilde{u}_{v}\|^{10/3}$.
	\par
	For the last term on the right-hand side of \eqref{eq:1.98}, by condition (\textbf{A3}), Poincar\'e inequality and   H\"older inequality we have
	\begin{equation}
		\label{eq:3.94}
		\begin{aligned}
		- (\sqrt{\varepsilon} \tilde{\sigma}(t,u_{v})v(t),\varrho^{2}_{k}\tilde{u}_{v})_{g} &= \sqrt{\varepsilon} \int_{\mathcal{O}}^{} ( - \sigma_{2}(t,u_{v}), \sigma_{1}(t,u_{v}))v(t) \cdot \nabla (\varrho^{2}_{k}\tilde{u}_{v})gdx
		\\& \le \sqrt{\varepsilon }M_{0}\int_{\mathcal{O}}^{}|(-\sigma_{2}(t,u_{v}), \sigma_{1}(t,u_{v}))| |v(t)| \cdot(\varrho^{2}_{k} |\nabla \tilde{u}_{v}| +  \frac{2C}{k}|\varrho_{k}\tilde{u}_{v}|)dx 
		\\& \le \sqrt{\varepsilon L}R_{3}M_{0}\|\varrho_{k} \nabla \tilde{u}_{v}\|^{2} + \frac{2C \sqrt{\varepsilon L} R_{3} M_{0}}{\lambda^{1/2}_{1} k} \int_{\mathcal{O}}^{}\|\nabla \tilde{u}_{v}\| \|\varrho_{k} \nabla \tilde{u}_{v}\|dx
		\\& \le 2\sqrt{\varepsilon L}R_{3}M_{0}\|\varrho_{k} \nabla \tilde{u}_{v}\|^{2}  + \frac{C^{2}\sqrt{\varepsilon L}R_{3}M_{0}}{\lambda_{1} k^{2}}\|\nabla \tilde{u}_{v}\|^{2}.
		\end{aligned}
	\end{equation}
	Substituting \eqref{eq:3.90}- \eqref{eq:3.94} into \eqref{eq:1.98}, we get 
	\begin{equation}
		\label{eq:3.96}
		\begin{aligned}
	\frac{d}{dt} \|\varrho_{k}\nabla \tilde{u}_{v}\|^{2} &\le \frac{4}{k}\|\nabla \tilde{u}_{v_{t}}\| \|\tilde{u}_{v}\| + \frac{\lambda^{1/2}_{1}}{8} \|\nabla \tilde{u}_{v_{t}}\|\|\tilde{u}_{v}\| + (\frac{1}{2}\nu\alpha - \nu\lambda_{1} + 2\sqrt{\varepsilon L}R_{3}M_{0})\|\varrho_{k} \nabla \tilde{u}_{v}\|^{2} 
	\\& +  \frac{2\lambda^{1/2}_{1}c_{4}}{k}\|\Delta\tilde{u}_{v}\|^{2} + \frac{ c_{2}}{m_{0}k} \left(\|\Delta \tilde{u}_{v}\|^{2} + \|\nabla \tilde{u}_{v}\|^{6} \right) + \frac{C^{2}\sqrt{\varepsilon L}R_{3}M_{0}}{\lambda_{1} k^{2}}\|\nabla \tilde{u}_{v}\|^{2}. 
		\end{aligned}
	\end{equation}
	By Poincar\'e inequality, the identity $|\nabla \tilde{u}_{v}| = |u_{v}|$ and $\alpha < \frac{\nu \lambda_{1} - 2\sqrt{\varepsilon L}R_{3}M_{0}}{\nu }$, we get
	\begin{equation}
		\label{eq:3.97}
		\begin{aligned}
		\frac{d}{dt} \|\varrho_{k}u_{v}\|^{2} &\le \frac{4}{k \lambda^{1/2}_{1}}\|u_{v_{t}}\| \|u_{v}\| + \frac{1}{8} \|u_{v_{t}}\|\|u_{v}\| -\frac{\nu \alpha }{2}\|\varrho_{k} u_{v}\|^{2} 
		+  \frac{2\lambda^{1/2}_{1}c_{4}}{k}\|\nabla u_{v}\|^{2} 
		\\& + \frac{c_{6}}{m_{0}k} \left(\|\nabla u_{v}\|^{2} + \|u_{v}\|^{6}\right)+ \frac{C^{2}\sqrt{\varepsilon L}R_{3}M_{0}}{\lambda_{1} k}\|u_{v}\|^{2},
		\end{aligned}
	\end{equation}
	where $c_{4}, c_{6} > 0$ depend on $C, \lambda_{1}$, and $\varepsilon$, but not on $k$. \par 
	Then, by Gronwall's inequality, we get for any $t \in [0, T]$, 
	\begin{equation}
		\label{eq:3.98}
		\begin{aligned}
	&	\|\varrho_{k} u_{v}(t)\|^{2} \le \|\varrho_{k} u_{0}\|^{2}e^{- \frac{\nu \alpha}{2}t} 
		\\& + e^{ \frac{\nu \alpha}{2}t} \times \int_{0}^{t} \left( \frac{4}{k \lambda^{1/2}_{1} }\| u_{v_{t}}(s)\|\|u_{v}(s)\|  + \frac{1}{8}\| u_{v_{t}}(s)\|\|u_{v}(s)\| + \frac{2\lambda^{1/2}_{1}c_{4}}{k}\|\nabla u_{v}\|^{2} \right)ds
		\\& + e^{ \frac{\nu \alpha}{2}t} \times	\left(  \frac{c_{6}}{m_{0} k}(\|\nabla u_{v}(s)\|^{2}   + \|u_{v}(s)\|^{6} )\right)ds
		\\& +  e^{ \frac{\nu \alpha}{2}t} \times \int_{0}^{t} \left(  \frac{C^{2}\sqrt{\varepsilon L}R_{3}M_{0}}{\lambda_{1} k^{2}}\|u_{v}(s)\|^{2} \right)ds.
		\end{aligned}
	\end{equation}
	For the ﬁrst term on the right-hand side of \eqref{eq:3.98}, by $u_{0} \in H_{g}$ and Lebesgue's Dominated Convergence Theorem, we can obtain that 
	\begin{equation}
		\label{eq:4.73}
		\begin{aligned}
			e^{\frac{- \nu \alpha }{2} t}\|\varrho_{k} u_{0}\|^{2}  \le \int_{\{(x_{1},x_{2}) \in \mathcal{O}:|x_{1}| \ge \frac{k}{2} \}}^{} |u(0,x)|^{2}dx \to 0,\,\, as\,\, k \to \infty.
		\end{aligned}
	\end{equation}
For the second and third term on the right-hand side of \eqref{eq:3.98}, by Lemma \ref{lem:1.13} and   Lemma 2.2 in Wang \cite{ref Wang. B4} we have 
	\begin{equation}
		\label{eq:4.79}
		\begin{aligned}
		e^{ \frac{\nu \alpha}{2}t} \times \int_{0}^{t} \left( \frac{4}{\lambda^{1/2}_{1} k}\| u_{v_{t}}(s)\|\|u_{v}(s)\| + \frac{1}{8}\| u_{v_{t}}(s)\|\|u_{v}(s)\|  + \frac{2\lambda^{1/2}_{1}c_{4}}{k}\|\nabla u_{v}\|^{2} \right) ds \to 0,\,\, as\,\, k \to \infty.
		\end{aligned}
	\end{equation}
	For the fourth  term on the right-hand side of \eqref{eq:3.98}, by Proposition \ref{prop:1.22} and   Lemma 2.2 in Wang \cite{ref Wang. B4} we have
	\begin{equation}
		\label{eq:3.99}
		\begin{aligned}
	 e^{ \frac{\nu \alpha}{2}t} \times  \left(\int_{0}^{t} \frac{c_{6}}{m_{0}k}\left(  \|\nabla u_{v}(s)\|^{2} + \|u_{v}(s)\|^{6} \right) ds\right) \to 0, \,as\,\, k \to \infty. 
		\end{aligned}
	\end{equation}
	For the last term on the right-hand side of \eqref{eq:3.98}, by $\| v \|_{L^{2}(0, T ; H_{0,g})} \le R_{3}$ and Lemma \ref{lem:1.13} we have
	\begin{equation}
		\label{eq:4.80}
		\begin{aligned}
		e^{ \frac{\nu \alpha}{2}t} \times \int_{0}^{t} \left(  \frac{C^{2}\sqrt{\varepsilon L}R_{3}M_{0}}{\lambda_{1} k^{2}}\|u_{v}(s)\|^{2} \right)ds \to 0, \,as\,\, k \to \infty. 
		\end{aligned}
	\end{equation}
	It follows from \eqref{eq:4.73}-\eqref{eq:4.80} that for every $\varepsilon' > 0 $, there exists $K^{*} = K^{*}(\alpha, \lambda_{1}, T, u_{0}, R_{3}, \varepsilon') > 0$ such that for all $k \ge K^{*}$ and $t \in [0, T]$,
	\begin{equation}
		\label{eq:3.100}
		\begin{aligned}
			\int_{|x| \ge k}^{} |u_{v}(t,x)|^{2}dx \le  \|\varrho_{k} u_{v}(t)\|^{2}  < \varepsilon'.
		\end{aligned}
	\end{equation}
	This proof is complete.
\end{proof}
\par
We now define $\mathcal{G}^{0}: C([0,T]; H_{g}) \to C([0,T]; H_{g}) \cap L^{2}([0,T]; V_{g})$ by, for every $\xi \in C([0, T]; H_{0,g})$, such that $u^{\varepsilon}(\cdot) = \mathcal{G}^{\varepsilon}(W(\cdot))$ a.s.\par 
For $\phi \in C([0,T], H_{g})$, denote by
\begin{equation}
	\label{eq:3.37}
	\begin{aligned}
		I(\phi) = \inf\left\{\dfrac{1}{2}\int_{0}^{T}\|v(t)\|^{2}_{0}dt: v \in L^{2}(0,T; H_{0,g}), u_{v} =\phi  \right\},
	\end{aligned}
\end{equation}
where $u_{v}$ is the solution of \eqref{eq:1.56}. The infimum of the empty set is taken to be $\infty$.
\par 
	In what follows, we prove that the solution of \eqref{eq:2.6} satisfies the Laplace principle in $C([0, T], H_{g}) \cap L^{2}([0,T]; V_{g})$ with rate function as given by \eqref{eq:3.37}. Therefore, we first show $\mathcal{G}^{\circ}$ satisfies (\textbf{H1}).
	\begin{lem}
		\label{lem:3.6}
		Suppose that $(\textbf{A1})$--$(\textbf{A3})$ hold and $f \in L^{2}(0,T;V^{*}_{g})$. For any $ 0 \le M < \infty$, the set
	\begin{equation}
		\label{eq:4.36}
		\begin{aligned}
			K_{M}:= \left\{u_{v} \in C([0,T] ;  H_{g}) \cap L^{2}(0,T; V_{g}) : v\in \mathcal{S}_{M}   \right\},
		\end{aligned}
	\end{equation}
	where $u_{v}$ is the unique solution in $C([0,T] ;  H_{g}) \cap L^{2}(0,T; V_{g})$ of the equation \eqref{eq:1.56}. Then $K_{ M}$ is compact in $C([0,T] ;  H_{g}) \cap L^{2}(0,T; V_{g})$.
	\end{lem}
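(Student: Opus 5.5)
We prove compactness sequentially. Since $\mathcal{S}_M$ with the weak topology is a compact metrizable space, it suffices to show: for any sequence $\{v_n\}\subset\mathcal{S}_M$ there are a subsequence (not relabelled) and $v\in\mathcal{S}_M$ with $v_n\rightharpoonup v$ weakly in $L^2(0,T;H_{0,g})$, and $u_{v_n}\to u_v$ strongly in $C([0,T];H_g)\cap L^2(0,T;V_g)$. Then $K_M$ is sequentially compact, hence compact in this metric space. Extracting the weakly convergent subsequence is immediate. The work is the strong convergence of $u_{v_n}$, which has to be done by hand because Lemma \ref{lem:1.13} only gives Lipschitz dependence on $v$ in the \emph{strong} $L^2(0,T;H_{0,g})$ norm.

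\textbf{Step 1 (uniform bounds and equicontinuity).} By \eqref{eq:1.84} with $R_2=\sqrt M$, the family $\{u_{v_n}\}$ is bounded in $L^\infty(0,T;H_g)\cap L^2(0,T;V_g)$. From \eqref{eq:1.56}, \eqref{eq:2.57}, \eqref{eq:2.8}, \eqref{eq:2.11} and (\textbf{A2}), the derivatives $\partial_t u_{v_n}$ are bounded in $L^{1}(0,T;V_g^*)$; in fact they are bounded in $L^2(0,T;V_g^*)$ except for the control term $\sigma(u)v$, which lies in $L^1(0,T;H_g)$. This yields equicontinuity of $\{u_{v_n}\}$ in $V_g^*$.

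\textbf{Step 2 (compactness on bounded pieces and identification of the limit).} On each $\mathcal{O}_k$ the embedding $H^1(\mathcal{O}_k)\hookrightarrow L^2(\mathcal{O}_k)$ is compact. Combining this with Step 1 and an Aubin--Lions/Simon argument applied to $\varrho$-type cutoffs of $u_{v_n}$, we obtain convergence of a subsequence in $L^2(0,T;L^2(\mathcal{O}_k))$. Lemma \ref{lem:3.57}, applied with $R_3=\sqrt M$, gives tails $\int_{|x|\ge k}|u_{v_n}(t)|^2dx<\varepsilon'$ uniformly in $n$ and $t$. A diagonal argument over $k$ therefore yields $u_{v_n}\to \tilde u$ strongly in $L^2(0,T;H_g)$, together with weak convergence in $L^2(0,T;V_g)$ and weak-$*$ convergence in $L^\infty(0,T;H_g)$. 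We then pass to the limit in \eqref{eq:3.35}. The nonlinear term converges by the strong $L^2$ convergence. For the control term, write $\sigma(u_{v_n})v_n-\sigma(\tilde u)v=(\sigma(u_{v_n})-\sigma(\tilde u))v_n+\sigma(\tilde u)(v_n-v)$. The second piece tends to zero weakly when tested against $\eta$. For the first piece we would need $\sigma(u_{v_n})\to\sigma(\tilde u)$ in $L^2(0,T;L_Q)$, which by (\textbf{A3}) requires $V_g$-convergence. We therefore postpone this identification and first prove strong $V_g$ convergence through an energy argument. Once the limit is identified, uniqueness in Lemma \ref{lem:1.13} gives $\tilde u=u_v$.

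\textbf{Step 3 (main obstacle: strong convergence in $C([0,T];H_g)\cap L^2(0,T;V_g)$).} Set $w_n=u_{v_n}-u_v$ and run the energy estimate of \eqref{eq:1.92}--\eqref{eq:1.94}. The differences in the nonlinear and Lipschitz terms are absorbed exactly as there, using $\|w_n\|_g^2\|u_v\|_{V_g}^2$ in Gronwall. What remains is the cross term $2\int_0^t(\sigma(s,u_v)(v_n-v),w_n)_g\,ds$. To handle it, split $w_n=(u_{v_n}-\tilde u_n)+(\tilde u_n-u_v)$, where $\tilde u_n$ is a fixed-in-$n$ approximation; alternatively, show directly that $\sup_t|\int_0^t(\sigma(u_v)(v_n-v),w_n)_g|\to0$. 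The latter follows because $w_n\to0$ strongly in $L^2(0,T;H_g)$ by Step 2, while $\sigma(u_v)(v_n-v)$ is bounded in $L^1(0,T;H_g)$ with $\|\sigma(u_v)\|_{L_Q}\in L^2$. The remaining delicate point is that the pairing needs $L^\infty$ in time on $w_n$ or $L^2$ on $\sigma(u_v)(v_n-v)$. We would first try a time-discretisation: approximate $\sigma(u_v)$ in $L^2(0,T;L_Q)$ by step functions. On each step, weak convergence of $v_n$ makes the integral against fixed elements vanish, and the error is controlled by the uniform bounds from \eqref{eq:1.84}. Gronwall then gives $\sup_t\|w_n(t)\|_g^2+\nu\int_0^T\|w_n\|_{V_g}^2\to0$. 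This also justifies the postponed identification in Step 2 and completes the compactness of $K_M$.
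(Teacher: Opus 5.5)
Your route differs from the paper's. The paper gets $u_{v_{n_k}}\to\tilde{\mathbf u}$ in $C([0,T];H_g)$ by Arzel\`a--Ascoli, then identifies $\tilde{\mathbf u}=u_v$ by passing to the limit in \eqref{eq:3.35}. Only after that does it run the energy estimate, where the control cross term is trivially bounded by $\sup_t\|w_n(t)\|_g$. You settle for $L^2(0,T;H_g)$ compactness and compare directly with $u_v$.

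Your reluctance to identify the limit first is well founded. (\textbf{A3}) is a $V_g$-Lipschitz condition, while \eqref{eq:4.66} (and \eqref{eq:1.93}) use an $H_g$-Lipschitz bound. Also, the paper's pointwise precompactness rests only on the $H_g$ bound \eqref{eq:4.12}, which does not make the localized pieces precompact. $L^2$-in-time compactness is what \eqref{eq:1.84} and Lemma \ref{lem:3.57} actually support.

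However, your Step 3 is circular as written. You dispose of the cross term because ``$w_n\to0$ strongly in $L^2(0,T;H_g)$ by Step 2''. But Step 2 only gives $u_{v_n}\to\tilde u$. The equality $\tilde u=u_v$ is exactly what you postponed, and you then say Step 3 justifies it. The step-function discretisation does not break the circle. It cures integrability, but the integrand still contains $w_n$, which moves with $n$, so you are never testing $v_n-v$ against a fixed element.

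The repair is your first option, with the fixed Step 2 limit $\tilde u$ in place of ``$\tilde u_n$'':
\[
\int_0^t\bigl(\sigma(s,u_v)(v_n-v),w_n\bigr)_g\,ds=\int_0^t\bigl(\sigma(s,u_v)(v_n-v),u_{v_n}-\tilde u\bigr)_g\,ds+\int_0^t\bigl(v_n-v,\psi\bigr)_0\,ds,
\]
where $\psi(s)=\sigma(s,u_v(s))^{*}(\tilde u(s)-u_v(s))$ lies in $L^2(0,T;H_{0,g})$ and does not depend on $n$.

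The first term is bounded, uniformly in $t$, by $2\sqrt M\bigl(\int_0^T\|\sigma(s,u_v)\|_{L_Q}^2\|u_{v_n}-\tilde u\|_g^2\,ds\bigr)^{1/2}$. This tends to $0$ by dominated convergence, using the uniform $L^\infty(0,T;H_g)$ bounds, a.e.\ convergence in $t$ along a subsequence, and $\|\sigma(s,u_v)\|_{L_Q}^2\le K(1+\|u_v\|_{V_g}^2)\in L^1(0,T)$. So your ``delicate point'' about needing $L^\infty$ in time disappears.

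The second term tends to $0$ for each $t$ by weak convergence, and uniformly in $t$ because $|\int_s^t(v_n-v,\psi)_0\,dr|\le2\sqrt M\,\|\psi\|_{L^2(s,t;H_{0,g})}$.

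Treat the Lipschitz term as $2\sqrt L\|v_n\|_0\|w_n\|_{V_g}\|w_n\|_g\le\frac{\nu}{4}\|w_n\|_{V_g}^2+\frac{4L}{\nu}\|v_n\|_0^2\|w_n\|_g^2$, not as in \eqref{eq:1.93}. Since $\int_0^T\|v_n\|_0^2\,ds\le M$, Gronwall gives $\sup_{t}\|w_n(t)\|_g^2+\nu\int_0^T\|w_n\|_{V_g}^2\,ds\to0$. The identity $\tilde u=u_v$ then comes out as a by-product.

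Two smaller points. First, an $L^1(0,T;V_g^*)$ bound on $\partial_tu_{v_n}$ does not by itself give equicontinuity, since that needs uniform integrability; you never actually need it, though. Second, cutoffs $\varrho u_{v_n}$ do not inherit a time-derivative bound from $V_g^*$, because for $\phi\in V_g$ the product $\varrho\phi$ is in general not in $V_g$. Estimate time translates directly instead: $\|u(t+h)-u(t)\|_g^2=\int_t^{t+h}\langle\partial_su,u(t+h)-u(t)\rangle\,ds$ yields $\int_0^{T-h}\|u_{v_n}(t+h)-u_{v_n}(t)\|_g^2\,dt\le C\sqrt h$ uniformly in $n$. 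Simon's criterion on each $\mathcal O_k$, together with the tails, then gives Step 2.
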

	\begin{proof}
	The proof consists of four steps. Let $\{u_{v_{n}}\}^{\infty}_{n=1}$ be a sequence in $K_{ M }$, where $u_{v_{n}}$ corresponds to the solution of \eqref{eq:1.56}. By weak compactness of $\mathcal{S}_{M}$, there exists a subsequence $\{{v_{n_{k}}}\}^{\infty}_{k=1}$ of $K_{ M }$ which converges to a limit $v \in \mathcal{S}_{M}$ weakly in $L^{2}(0, T; H_{0,g})$. Next, we prove that $u_{v_{n_{k}}} \to u_{v}$ strongly in $C([0,T]; H_{g}) \cap L^{2}([0,T]; V_{g})$. 
	\par
	\textbf{Step 1: establish weak convergence}. Since $v_{n} \to  v$ weakly in $L^{2}(0, T; H_{0,g})$, we know  the sequence $\{v_{n}\}^{\infty}_{n=1}$ is bounded in $L^{2}(0, T; H_{0,g})$, and thus by Lemma \ref{lem:1.13} we see that there exists $C_{1} = C_{1}(T, N) > 0$ such that for all $n \in \mathbb{N}$,
	\begin{equation}
		\label{eq:4.1}
		\begin{aligned}
		\| u_{v}\|^{2}_{g} + \| u_{v_{n}}\|^{2}_{g} + \|u_{v_{n}}\|^{2}_{V_{g}} \le C_{1}.
		\end{aligned}
	\end{equation}
	By \eqref{eq:4.1} and Proposition \ref{prop:1.4} we infer that there exist $ \tilde{\mathbf{u}} \in L^{2}(0, T; V_{g}) \cap  L^{\infty}(0,T; H_{g})$, $F_{0}  \in L^{2}(0, T; V_{g})$ and a subsequence of $\{u_{v_{n_{k}}}\}^{\infty}_{k=1}$ (not relabeled) such that
	\begin{equation}
		\label{eq:4.2}
		\begin{aligned}
			u_{v_{n_{k}}} \to \ \tilde{\mathbf{u}}\,\,\,\, weak-star \,\,in \,\, L^{\infty}(0, T; H_{g}),
		\end{aligned}
	\end{equation}
	\begin{equation}
		\label{eq:4.61}
		\begin{aligned}
			u_{v_{n_{k}}} \to \ \tilde{\mathbf{u}}\,\,\,\, weakly \,\,in \,\, L^{2}(0, T; V_{g}),
		\end{aligned}
	\end{equation}
	\begin{equation}
		\label{eq:4.78}
		\begin{aligned}
		F(	u_{v_{n_{k}}}) \to \ F_{0}\,\,\,\, weakly \,\,in \,\, L^{2}(0, T; V_{g}).
		\end{aligned}
	\end{equation}
	\par 
	\textbf{Step 2: prove the convergence}.
		\begin{equation}
		\label{eq:4.3}
		\begin{aligned}
			u_{v_{n_{k}}} \to \ \tilde{\mathbf{u}}\,\,\,\, strongly \,\,in \,\, C(0, T; H_{g}).
		\end{aligned}
	\end{equation}
	To that end, we will verify that the sequence $\{u_{v_{n_{k}}}\}^{\infty}_{k=1}$ is equicontinuous in $H_{g}$ on $[0, T]$, and $\{u_{v_{n_{k}}}\}^{\infty}_{k=1}$ is precompact in $H_{g}$ for every $t\in [0, T ]$. 
	For $0 \le  r \le t \le T$ and \eqref{eq:3.35}, we have for every $\eta \in H_{g},$
	\begin{equation}
		\label{eq:4.4}
		\begin{aligned}
		&(u_{v_{n_{k}}}(t) -u_{v_{n_{k}}}(r) , \eta)_{g} + \nu\int_{r}^{t}(A_{g}u_{v_{n_{k}}}(s), \eta)_{g}ds + \int_{r}^{t}(B_{g}(u_{v_{n_{k}}}(s), u_{v_{n_{k}}}(s) ), \eta)_{g}ds + \nu\int_{r}^{t}(R_{g}u_{v_{n_{k}}}(s), \eta)_{g}ds 
		\\& = \int_{r}^{t}\int_{\mathcal{O}}^{}f(s)\cdot\eta gdxds + \int_{r}^{t}(\sigma(s,u_{v_{n_{k}}}(s))v_{n_{k}}(s), \eta)_{g}ds.
		\end{aligned}
	\end{equation}
	For the second term on the left-hand side of \eqref{eq:4.4}, by the definition of $A_{g}$ we have
	\begin{equation}
		\label{eq:4.5}
		\begin{aligned}
		\left|\nu \int_{r}^{t}(A_{g}u_{v_{n_{k}}}(s), \eta)_{g}ds  \right| &\le \nu \int_{r}^{t}|(A_{g}u_{v_{n_{k}}}(s), \eta)_{g}|ds \le \nu(t-r)^{\frac{1}{2}} \|u_{v_{n_{k}}}\|_{V_{g}}\|\eta\|_{g}
		\\& \le \nu C^{\frac{1}{2}}_{1}(t-r)^{\frac{1}{2}}\|\eta\|_{g}.
		\end{aligned}
	\end{equation}
	For the third  term on the left-hand side of \eqref{eq:4.4}, by \eqref{eq:2.4} we have
	\begin{equation}
		\label{eq:4.6}
		\begin{aligned}
		\left| \int_{r}^{t}(B_{g}(u_{v_{n_{k}}}(s), u_{v_{n_{k}}}(s) ), \eta)_{g}ds    \right| &\le \int_{r}^{t}|b_{g}(u_{v_{n_{k}}}(s), u_{v_{n_{k}}}(s) ,\eta)|ds
		\\& \le c(t-r)^{\frac{1}{2}}\|u_{v_{n_{k}}}(s)\|^{\frac{1}{2}}_{g} \|u_{v_{n_{k}}}(s)\|^{\frac{1}{2}}_{V_{g}}\|u_{v_{n_{k}}}(s)\|^{\frac{1}{2}}_{V_{g}}\|A_{g}u_{v_{n_{k}}}(s)\|^{\frac{1}{2}}_{g}\|\eta\|_{g}
		\\& \le c\,\,C^{2}_{1}\, (t-r)^{\frac{1}{2}} \|\eta\|_{g}. 
		\end{aligned}
	\end{equation}
	For the fourth  term on the left-hand side of \eqref{eq:4.4}, by \eqref{eq:2.11} we have
	\begin{equation}
		\label{eq:4.7}
		\begin{aligned}
			\left|\nu \int_{r}^{t}(R_{g}u_{v_{n_{k}}}(s), \eta)_{g}ds  \right| &\le \nu \int_{r}^{t}|(R_{g}u_{v_{n_{k}}}(s), \eta)_{g}|ds \le \nu(t-r)^{\frac{1}{2}} \|R_{g}u_{v_{n_{k}}}\|_{{g}}\|\eta\|_{g}
			\\& \le \nu(t-r)^{\frac{1}{2}}\|\eta\|_{g}\frac{\|\nabla g\|_{\infty}}{m_{0}}\|u_{v_{n_{k}}}(s)\|_{V_{g}}
			\\& \le \nu(t-r)^{\frac{1}{2}} \frac{\lambda^{\frac{1}{2}}_{1} C^{\frac{1}{2}}_{1}}{16}\|\eta\|_{g}.
		\end{aligned}
	\end{equation}
	For the second term on the right-hand side of \eqref{eq:4.4}, by H\"older inequality we obtain
	\begin{equation}
		\label{eq:4.8}
		\begin{aligned}
	\left| \int_{r}^{t}\int_{\mathcal{O}}^{}f(s)\cdot\eta gdxds   \right| \le \int_{r}^{t}\int_{\mathcal{O}}^{}|f(s)\cdot\eta| gdxds \le (t-r)^{\frac{1}{2}}\|f\|_{V^{*}}  \|\eta\|_{g}. 
		\end{aligned}
	\end{equation}
	For the last term on the right-hand side of \eqref{eq:4.4}, by condition (\textbf{A2}) and \eqref{eq:4.1}, we obtain
	\begin{equation}
		\label{eq:4.9}
		\begin{aligned}
		\left| \int_{r}^{t}(\sigma(s,u_{v_{n_{k}}}(s))v_{n_{k}}(s), \eta)_{g}ds  \right| &\le \|\eta\|_{g}\int_{r}^{t} \|\sigma(s, u_{v_{n_{k}}}(s))\|_{L_{Q}}\|v_{n_{k}}(s)\|_{L^{2}(0,T;H_{0,g})}ds
		\\& \le (t-r)^{\frac{1}{2}} (MK(1 + C^{2}_{1}))^{\frac{1}{2}}\|\eta\|_{g}
		\end{aligned}
	\end{equation}
	It follows from \eqref{eq:4.5}-\eqref{eq:4.9} that there exists $C_{5} = C_{5}(\nu, \lambda_{1}, T,  M, K, C_{1}) > 0$ such that for all $0 \le r \le t \le  T$ and $\eta \in H_{g}$, 
	\begin{equation}
		\label{eq:4.10}
		\begin{aligned}
		\left|(u_{v_{n_{k}}}(t) - u_{v_{n_{k}}}(r) , \eta)_{g} \right| \le C_{5}(t-r)^{\frac{1}{2}}\|\eta\|_{g}. 
		\end{aligned}
	\end{equation}
	which shows that for all $0 \le r \le t \le  T$,
		\begin{equation}
		\label{eq:4.11}
		\begin{aligned}
			\|u_{v_{n_{k}}}(t) - u_{v_{n_{k}}}(r)\|_{g}  \le C_{5}(t-r)^{\frac{1}{2}}. 
		\end{aligned}
	\end{equation}
	Hence the sequence $\{u_{v_{n_{k}}}\}^{\infty}_{k=1}$ is equicontinuous in $H_{g}$ on $[0, T]$. \par 
	Next, we show for every $t \in [0, T]$,  the set $\{u_{v_{n_{k}}}\}^{\infty}_{k=1}$ is precompact in $V_{g}$.  Let $\varrho$ be the cut--off function as given by \eqref{eq:1.97} and $\varrho_{m}(x) =\varrho(\frac{x}{m})$ for $m \in \mathbb{N}$.  Then we have decomposed $u_{v_{n_{k}}}(t) = \tilde{\textbf{{u}}}^{m}_{v_{n_{k}}}(t) + \hat{\textbf{u}}^{m}_{v_{n_{k}}}(t)$, where $\tilde{\textbf{{u}}}^{m}_{v_{n_{k}}}(t, x) = \varrho_{m}(x)u_{v_{n_{k}}}(t, x)$ and $\hat{\textbf{u}}^{m}_{v_{n_{k}}}(t, x) = (1 - \varrho_{m}(x))u_{v_{n_{k}}}(t, x)$. By \eqref{eq:4.1}, we find that there exists $\textbf{C}_{4} =  \textbf{C}_{4}(T, N) > 0$ such that for all $k,m \in \mathbb{N}$, 
	\begin{equation}
		\label{eq:4.12}
		\begin{aligned}
		\| 	\hat{\textbf{u}}^{m}_{v_{n_{k}}}\|_{{g}} \le \textbf{C}_{4}.
		\end{aligned}
	\end{equation}
	For $m \in \mathbb{N}$, define $\textbf{Z}_{m} = \{u \in H_{g}: \|u\|_{g} \le  \textbf{C}_{4}, \,\, u(x)=0\,\, for\,\, almost\,\, all \,\, |x| \ge m \}$. By \eqref{eq:4.12} and the definition of $\hat{\textbf{u}}^{m}_{v_{n_{k}}}$, it follows that $\{\hat{\textbf{u}}^{m}_{v_{n_{k}}}\}^{\infty}_{k=1}  \subseteq \textbf{Z}_{m} $ for every $m \in \mathbb{N}$ and $t \in [0,T]$. Thanks to the compact embedding of $H^{1}(|x| < m) \hookrightarrow L^{2}(|x| < m)$, we deduce that the set $\textbf{Z}_{m}$ is precompact in $V_{g}$. Therefore, for every $m\in \mathbb{N}$ and $t \in [0,T]$, 
	\begin{equation}
		\label{eq:4.13}
		\begin{aligned}
	\{ 	\hat{\textbf{u}}^{m}_{v_{n_{k}}} \}^{\infty}_{k=1} \,\, is\,\, precompact\,\, in\,\, V_{g}.
		\end{aligned}
	\end{equation}
	In addition, by Lemma \ref{lem:3.57}, we find that for every $\varepsilon'> 0$, there exists $K^{*} = K^{*}(T, N, u_{0}, \varepsilon') > 0$ such that for all $m \ge  K^{*}$, $k \in \mathbb{N}$ and $t \in [0,T]$, 
	\begin{equation}
		\label{eq:4.14}
		\begin{aligned}
		\|\tilde{\textbf{{u}}}^{m}_{v_{n_{k}}}(t)\|_{{g}} \le  \left(\int_{|x| \ge \frac{m}{2}}^{}   |\textbf{{u}}_{v_{n_{k}}}(t,x)|^{2}dx \right)^{\frac{1}{2}} \le \frac{\varepsilon'}{2}. 
		\end{aligned}
	\end{equation}
	By \eqref{eq:4.13} and \eqref{eq:4.14}, we obtain that $\{\tilde{\textbf{{u}}}^{m}_{v_{n_{k}}}(t) + \hat{\textbf{u}}^{m}_{v_{n_{k}}}(t)\}^{\infty}_{k=1}$ has a finite open cover with radius $\varepsilon'$ in $V_{g}$, which implies that $\{u_{v_{n_{k}}}(t)\}^{\infty}_{k=1}$ is precompact in $V_{g}$. 
	\par 
	Therefore, by \eqref{eq:4.2} and the Arzela-Ascoli Theorem imply the asserted strong convergence in $C([0,T];H_g)$.
	\par 
	\textbf{Step 3: prove the convergence}.
	\begin{equation}
		\label{eq:4.62}
		\begin{aligned}
			u_{v_{n_{k}}} \to \ u_{v}\,\,\,\, strongly \,\,in \,\, C(0, T; H_{g}).
		\end{aligned}
	\end{equation}
	To prove \eqref{eq:4.62}, by \eqref{eq:4.3} we only need to show $\tilde{\mathbf{u}} = u_{v}$. Equation \eqref{eq:3.35} gives, for every $t \in [0,T]$ and $\eta \in H_{g}$,
	\begin{equation}
		\label{eq:4.63}
		\begin{aligned} 
		&(u_{v_{n_{k}}}(t), \eta)_{g} + \int_{0}^{t} (\nu A_{g}u_{v_{n_{k}}}(s), \eta)_{g} ds + \int_{0}^{t} (B_{g}(u_{v_{n_{k}}}(s), u_{v_{n_{k}}}(s)), \eta)_{g} ds 
		+ \int_{0}^{t} (\nu R_{g}u_{v_{n_{k}}}(s), \eta)_{g} ds 
		\\& = (u_{0}, \eta)_{g} + \int_{0}^{t}(f(s), \eta)_{g}ds + \int_{0}^{t}(\sigma(s,u_{v_{n_{k}}}(s))v_{n_{k}}(s), \eta)_{g}ds.
		\end{aligned}
	\end{equation}
	We now prove $\tilde{\mathbf{u}} = u_{v}$ by taking the limit of \eqref{eq:4.63} as $k \to \infty$. We ﬁrst deal with the nonlinear term on the left-hand side of \eqref{eq:4.63}. By \eqref{eq:4.3} we infer that, up to a subsequence
	\[
			u_{v_{n_{k}}}  \to \tilde{\mathbf{u}} \,\,\,a.e.\,\, on \,\,  (0, T ) \times \mathbb{R}^{2}.
	\]	
	By the definition of $F$ in Proposition \ref{prop:1.4} and its local monotonicity property, we obtain 
	\begin{equation}
		\label{eq:4.89}
		\begin{aligned}
		F(u_{v_{n_{k}}}(s)) \to F_{0}(\tilde{\mathbf{u}}(s))\,\, weakly\,\, in\,\, L^{2}(0,T; V_{g}). 
		\end{aligned}
	\end{equation}
	For the last term on the right-hand side of \eqref{eq:4.63}, we have for any $t \in [0,T]$, and $\eta \in H_{g}$,
	\begin{equation}
		\label{eq:4.65}
		\begin{aligned}
		&\int_{0}^{t}(\sigma(s,u_{v_{n_{k}}}(s))v_{n_{k}}(s), \eta)_{g}ds 
		\\& = \int_{0}^{t}((\sigma(s,u_{v_{n_{k}}}(s)) - \sigma(s,\tilde{\mathbf{u}}(s) ))v_{n_{k}}(s), \eta)_{g}ds + \int_{0}^{t}(\sigma(s,\tilde{\mathbf{u}}(s))v_{n_{k}}(s), \eta)_{g}ds. 
		\end{aligned}
	\end{equation}
	By \eqref{eq:4.3}, condition (\textbf{A3}) and H\"older inequality, we obtain
	\begin{equation}
		\label{eq:4.66}
		\begin{aligned}
		& \left| \int_{0}^{t}((\sigma(s,u_{v_{n_{k}}}(s)) - \sigma(s,\tilde{\mathbf{u}}(s) ))v_{n_{k}}(s), \eta)_{g}ds          \right| 
		\\& \le \|\eta\| \int_{0}^{t} \|\sigma(s,u_{v_{n_{k}}}(s)) - \sigma(s,\tilde{\mathbf{u}}(s)\|_{L_{Q}} \|v_{n_{k}}(s)\|_{L^{2}(0,T; H_{0,g})} ds
		\\& \le  \sqrt{M L}  \|\eta\| \left( \int_{0}^{t} \| u_{v_{n_{k}}}(s) - \tilde{\mathbf{u}}(s)\|^{2}_{g} ds      \right)^{1/2} \,\, \to\,\, 0\,\,as\,\,k\to\infty. 
		\end{aligned}
	\end{equation}
	We now deal with the last term of \eqref{eq:4.65}. Define $\mathcal{T} : L^{2}(0,T; H_{0,g}) \to H_{g}$ by
	\[
	\mathcal{T}(\omega) = \int_{0}^{t}\sigma(s,\tilde{\mathbf{u}}(s))\omega(s) g ds,\,\, \forall\, \omega \in L^{2}(0,T; H_{0,g}). 
	\]
	It follows from condition (\textbf{A2}) and \eqref{eq:4.2} that $\mathcal{T}$ is well-defined, and
	\begin{align*}
    \| \mathcal{T}(\omega) \|^{2}_{g} 
    & \le \left(\int_{0}^{t}\| \sigma(s,\tilde{\mathbf{u}}(s))\|^{2}_{L^{2}(H_{0,g}; H_{g})} ds   \right)\left(\int_{0}^{t}\|\omega(s)\|^{2}_{L^{2}(0,T; H_{0,g})} \right)
    \\& \le \left(\int_{0}^{t}\left(K(1+ \|\tilde{\mathbf{u}}(s)\|^{2}_{V_{g}}) \right)  ds   \right) \left(\int_{0}^{t}\|\omega(s)\|^{2}_{L^{2}(0,T; H_{0,g})} \right),
	\end{align*}
	which shows $\mathcal{T}$ is a bounded linear operator and thus it is weakly continuous. Since $v_{n_{k}} \rightharpoonup v$ in $ L^{2}(0,T; H_{0,g})$, we have $\mathcal{T}(v_{n_{k}}) \to \mathcal{T}(v)$ weakly in $H_{g}$. Hence, for all $t \in [0,T]$, 
	\begin{equation}
		\label{eq:4.68}
		\begin{aligned}
		\underset{k\to \infty}{\lim} \int_{0}^{t}(\sigma(s,\tilde{\mathbf{u}}(s))v_{n_{k}}(s), \eta)_{g}ds = \int_{0}^{t}(\sigma(s,\tilde{\mathbf{u}}(s))v(s), \eta)_{g}ds.
		\end{aligned}
	\end{equation}
	It follows from \eqref{eq:4.65}--\eqref{eq:4.68} that we have for any $t\in [0,T]$ and $\eta \in H_{g}$,
\begin{equation}
	\label{eq:4.87}
	\begin{aligned}
	\underset{k \to \infty}{\lim }\int_{0}^{t}(\sigma(s, u_{v_{n_{k}}}(s))v_{n_{k}}(s) ,\eta  )_{g}ds = \int_{0}^{t}(\sigma(s,\tilde{\mathbf{u}}(s))v(s), \eta)_{g}ds.
	\end{aligned}
\end{equation}
	Now, letting $k \to \infty$ in \eqref{eq:4.63}, by \eqref{eq:4.2}, \eqref{eq:4.89} and \eqref{eq:4.87}, we deduce that for any $t \in [0,T]$ and $\eta \in H_{g}$, 
	\begin{equation}
		\label{eq:4.69}
		\begin{aligned} 
			&(\tilde{\mathbf{u}}(t), \eta)_{g} + \int_{0}^{t} (\nu A_{g}\tilde{\mathbf{u}}(s), \eta)_{g} ds + \int_{0}^{t} (B_{g}(\tilde{\mathbf{u}}(s), \tilde{\mathbf{u}}(s)), \eta)_{g} ds 
			+ \int_{0}^{t} (\nu R_{g}\tilde{\mathbf{u}}(s), \eta)_{g} ds 
			\\& = (u_{0}, \eta)_{g} + \int_{0}^{t}(f(s), \eta)_{g}ds + \int_{0}^{t}(\sigma(s,\tilde{\mathbf{u}}(s))v(s), \eta)_{g}ds,
		\end{aligned}
	\end{equation}
	which shows that $\tilde{\mathbf{u}}$ is a solution of \eqref{eq:1.56}. By the uniqueness of 
	solutions,we have $ \tilde{\mathbf{u}} = u_{v}$, which together with \eqref{eq:4.3} implies \eqref{eq:4.62}.
	\par 
	\textbf{Step 4: prove the convergence}.
	\begin{equation}
		\label{eq:4.47}
		\begin{aligned}
		u_{v_{n_{k}}} \to  u_{v} \,\, strongly\,\,  in\,\,  C([0, T ]; H_{g} ) \cap L^{2}(0, T;V_{g}).
		\end{aligned}
	\end{equation}
	By \eqref{eq:2.10} and \eqref{eq:1.56} we obtain
	\begin{equation}
		\label{eq:3.16}
		\begin{aligned}
		&\frac{d}{dt} \|u_{v_{n_{k}}}(t) - u_{v}(t)\|^{2}_{g} + 2 \nu   \|u_{v_{n_{k}}}(t) - u_{v}(t)\|^{2}_{V_{g}} 
		\\& + (B_{g}(u_{v_{n_{k}}}(t), u_{v_{n_{k}}}(t)) - B_{g}(u_{v}(t), u_{v}(t)), u_{v_{n_{k}}}(t) - u_{v}(t))_{g}
		\\& \le \dfrac{\nu}{8}  \|u_{v_{n_{k}}}(t) - u_{v}(t)\|^{2}_{V_{g}} + 2  (\sigma(s, u_{v_{n_{k}}}(t))v_{n_{k}}(t) - \sigma(s, u_{v}(t))v(t),\, u_{v_{n_{k}}}(t) - u_{v}(t) )_{g}.
		\end{aligned}
	\end{equation}
	For the third term on the left-hand side of \eqref{eq:3.16}, by \eqref{eq:2.3}, Ladyzhenskaya inequality and Young's inequality  we have
	\begin{equation}
		\label{eq:3.17}
		\begin{aligned}
		&	\left|   (B_{g}(u_{v_{n_{k}}}(t), u_{v_{n_{k}}}(t), u_{v_{n_{k}}}(t) - u_{v}(t)) -B_{g}(u_{v}(t), u_{v}(t), u_{v_{n_{k}}}(t) - u_{v}(t)))_{g}\right|  
		\\& \le   \left| b_{g}(u_{v_{n_{k}}}(t), u_{v_{n_{k}}}(t), u_{v_{n_{k}}}(t) - u_{v}(t)) - b_{g}(u_{v_{n_{k}}}(t), u_{v}(t), u_{v_{n_{k}}}(t) - u_{v}(t)) \right|
		\\& +   \left| b_{g}(u_{v}(t), u_{v_{n_{k}}}(t), u_{v_{n_{k}}}(t) - u_{v}(t)) - b_{g}(u_{v}(t), u_{v}(t), u_{v_{n_{k}}}(t) - u_{v}(t)) \right|
		\\& \le   \left| b_{g}(u_{v_{n_{k}}}(t), u_{v_{n_{k}}}(t) - u_{v}(t), u_{v_{n_{k}}}(t) - u_{v}(t)) \right| +    \left| b_{g}(u_{v_{n_{k}}}(t) - u_{v}(t), u_{v}(t), u_{v_{n_{k}}}(t) - u_{v}(t)) \right|
		\\& \le   \|u_{v_{n_{k}}}(t) - u_{v}(t)\|^{2}_{L^{4}(\mathcal{O})} \|\nabla u_{v}(t)\|_{g}
		\\& \le \frac{\sqrt{2M_{0}}}{m_{0}}  \|u_{v_{n_{k}}}(t) - u_{v}(t)\|_{g}\|u_{v_{n_{k}}}(t) - u_{v}(t)\|_{V_{g}}\|u_{v}(t)\|_{V_{g}} 
		\\& \le \frac{7\nu }{8}  \|u_{v_{n_{k}}}(t) - u_{v}(t)\|^{2}_{V_{g}}  + \dfrac{2M_{0}}{7\nu m^{2}_{0}} \|u_{v_{n_{k}}}(t) - u_{v}(t)\|^{2}_{g} \|u_{v}(t)\|^{2}_{V_{g}}. 
		\end{aligned}
	\end{equation}
	For the last term on the right-hand side of \eqref{eq:3.16}, by H\"older inequality  we have
	\begin{equation}
		\label{eq:4.45}
		\begin{aligned}
		&2 \left|  (\sigma(s, u_{v_{n_{k}}}(t))u_{v_{n_{k}}}(t) - \sigma(s, u_{v}(t)))v(t),\, u_{v_{n_{k}}}(t) - u_{v}(t) )_{g} \right|
		\\& \le 2  \left( \|\sigma(s, u_{v_{n_{k}}}(t))\|_{L_{Q}}\|v_{n_{k}}(t)\|_{L^{2}(0,T; H_{0,g})} + \|\sigma(s, u_{v}(t))\|_{L_{Q}}\|v(t)\|_{L^{2}(0,T; H_{0,g})}\right) \|u_{v_{n_{k}}}(t) - u_{v}(t)\|_{g}.
		\end{aligned}
	\end{equation}
	By condition (\textbf{A2}) and \eqref{eq:4.1}, there exists a constant $C_{6}= C_{6} (N, T)>0$ such that for any $t \in [0,T]$, 
	\begin{align*}
		 2  \left( \|\sigma(s, u_{v_{n_{k}}}(t))\|_{L_{Q}} + \|\sigma(s, u_{v}(t))\|_{L_{Q}} \right) \le C_{6},
	\end{align*}
	which, along with \eqref{eq:3.16} shows, that for all $t \in [0,T]$,
	\begin{equation}
		\label{eq:4.90}
		\begin{aligned}
	&\frac{d}{dt} \|u_{v_{n_{k}}}(t) - u_{v}(t)\|^{2}_{g} + \nu   \|u_{v_{n_{k}}}(t) - u_{v}(t)\|^{2}_{V_{g}} 
		  \\& \le   \dfrac{2M_{0}}{7\nu m^{2}_{0}} \|u_{v_{n_{k}}}(t) - u_{v}(t)\|^{2}_{g} \|u_{v}(t)\|^{2}_{V_{g}}
		  + C_{6}\left( \|v_{n_{k}}(t)\|_{L^{2}(0,T; H_{0,g})} + \|v(t)\|_{L^{2}(0,T; H_{0,g})} \right)\|u_{v_{n_{k}}}(t) - u_{v}(t)\|_{g}.
		\end{aligned}
	\end{equation}
	By \eqref{eq:3.17} - \eqref{eq:4.90} and \eqref{eq:4.1}  we obtain for all $ t \in [0,T]$,
	\begin{equation}
		\label{eq:3.18}
		\begin{aligned}
	 \|u_{v_{n_{k}}}(t) - u_{v}(t)\|^{2}_{g}  &+ \nu \int_{0}^{t} \|u_{v_{n_{k}}}(t) - u_{v}(s)\|^{2}_{V_{g}}ds
	\\& \le \dfrac{2M_{0}}{7\nu m^{2}_{0}}\int_{0}^{t}\|u_{v_{n_{k}}}(s) - u_{v}(s)\|^{2}_{g} \|u_{v}(s)\|^{2}_{V_{g}} ds 
	\\& + C_{6}\int_{0}^{t}\left( \|v_{n_{k}}(s)\|_{L^{2}(0,T; H_{0,g})} + \|v(s)\|_{L^{2}(0,T; H_{0,g})} \right)\|u_{v_{n_{k}}}(s) - u_{v}(s)\|_{g}ds
	\\& \le \dfrac{2M_{0} C_{1}}{7\nu m^{2}_{0}}\|u_{v_{n_{k}}}(s) - u_{v}(s)\|^{2}_{g} 
	 + 2C_{6} M\|u_{v_{n_{k}}} - u_{v}\|_{g},
	\end{aligned}
	\end{equation}
	where $C_{1}$ and $C_{6}$ are positive constants that depend on $N$, $T$. Then by \eqref{eq:4.1} and \eqref{eq:4.62} we see that the right-hand side of \eqref{eq:3.18} converges to zero as $n \to \infty$, from which \eqref{eq:4.47} follows. 
	\par 
	This completes the proof.
	\end{proof}
	Recall that there exists a strong solution of \eqref{eq:2.6}  with values in $C([0, T ]; H_{g} ) \cap L^{2}(0, T ; V_{g} )$, and it is pathwise unique. It follows that there exists a Borel-measurable function $\mathcal{G}^{0}: C([0, T ]; H_{g} ) \to  C([0, T ]; H_{g} ) \cap L^{2}(0, T; V_{g} )$ such that $u (\cdot) = \mathcal{G}^{\varepsilon}(W(\cdot))$ almost surely. Note that for every $M > 0$, $\mathcal{S}_{M} = \{v\in L^{2}(0,T; H_{0,g}) : \int_{0}^{T}\|v(t)\|^{2}_{0} dt \le M \}$. 
	\begin{lem}
		\label{lem:3.38}
		Suppose (\textbf{A1})--(\textbf{A3}) hold, and $ f\in L^{2}(0,T; V^{*}_{g})$, $ v \in \mathcal{A}_{M}$ with $0 < M < \infty$. If $ u^{\varepsilon}_{v} = \mathcal{G}^{\varepsilon}\left(W(\cdot) + \frac{1}{\sqrt{\varepsilon}} \int_{0}^{\cdot}v(s)ds\right)$, then $u^{\varepsilon}_{v}$ is the unique solution of
	\end{lem}
	\begin{equation}
		\label{eq:3.39}
		\begin{aligned}
			\begin{split}
				\frac{d}{dt}u^{\varepsilon}_{v}(t) + \nu A_{g}u^{\varepsilon}_{v}(t) + B_{g}(u^{\varepsilon}_{v}(t), u^{\varepsilon}_{v}(t)) + \nu R_{g}u^{\varepsilon}_{v}(t) = f(t)+ \sigma(t,u^{\varepsilon}_{v}(t))v(t) + \sqrt{\varepsilon}\sigma(t,u^{\varepsilon}_{v})dW(t),
			\end{split}
		\end{aligned}
	\end{equation}
	with initial value $u^{\varepsilon}_{v}(0) = u_{0} \in H_{g}$.\par 
	Furthermore, for each $R_{4} > 0, T > 0, N >0$,  there exists $M_{1} = M_{1}(R_{4}, T , M, K, \nu) > 0$ such that for any $u_{0} \in H_{g}$ with $\| u_{0}\|_{g} < R_{4}$ and any $ v \in \mathcal{A}_{M}$, the solution $u^{\varepsilon}_{v}$ satisfies for all $\varepsilon \in (0, 1)$,
	\begin{equation}
		\label{eq:2.32}
		\begin{aligned}
			\textbf{E} \left( \|u^{\varepsilon}_{v}\|^{2}_{C([0,T]; H_{g})}  + \|u^{\varepsilon}_{v}\|^{2}_{L^{2}(0,T; V_{g})}  \right) \le M_{1}. 
		\end{aligned}
	\end{equation}
	\begin{proof}
		Since $v \in \mathcal{A}_{M}$, $\int_{0}^{T}\|v(s)\|^{2}_{0}ds \le  M$ a.s.,  $\widetilde{W}(\cdot):= W(\cdot)+ \frac{1}{\sqrt{\varepsilon}} \int_{0}^{\cdot} v(s)ds$ is a Wiener process  with covariance form $Q$ under the probability measure $\widetilde{P}^{\varepsilon}_{v}$:
		\begin{equation}
			\label{eq:3.40}
			\begin{aligned}
				\frac{d\widetilde{P}^{\varepsilon}_{v}}{dP} = \exp\left\{ - \dfrac{1}{\sqrt{\varepsilon}} \int_{0}^{T}v(s)dW(s) - \dfrac{1}{2\varepsilon}\int_{0}^{T} \|v(s)\|^{2}_{0} ds\right\}.
			\end{aligned}
		\end{equation}
		By Girsanov's theorem, we know that $u^{\varepsilon}_{v}$ is the unique  strong solution of \eqref{eq:1.1} with $W$ replaced by $\widetilde{W}$, and hence $u^{\varepsilon}_{v}$ is the unique solution of \eqref{eq:3.39} with initial condition $u^{\varepsilon}_{v}(0) = u_{0} \in H_{g}$. 
		\par 
		Next, we prove \eqref{eq:2.32}. By \eqref{eq:3.93} and It\^o formula, taking supremum and expectation we have for $t \in [0,T]$,
	\begin{equation}
		\label{eq:2.35}
		\begin{aligned}
		\textbf{E} \underset{0 \le t\le T}{\sup} \|u^{\varepsilon}_{v}(t)\|^{2}_{g} &+ 2\nu \textbf{E} \underset{0 \le t\le T}{\sup} \int_{0}^{t} \|u^{\varepsilon}_{v}(s)\|^{2}_{V_{g}}ds 
		 +  2\nu \textbf{E} \underset{0 \le t\le T}{\sup} \int_{0}^{t} (R_{g}u^{\varepsilon}_{v}(s), u^{\varepsilon}_{v}(s) )_{g}ds 
		\\& = \|u_{0}\|^{2}_{g} + 2 \textbf{E} \underset{0 \le t\le T}{\sup} \int_{0}^{t} (f(s), u^{\varepsilon}_{v}(s))_{g}ds + 2 \textbf{E} \underset{0 \le t\le T}{\sup} \int_{0}^{t} (\sigma(s, u^{\varepsilon}_{v}(s))v(s), u^{\varepsilon}_{v}(s))_{g}ds
		\\& + \varepsilon \textbf{E} \underset{0 \le t\le T}{\sup} \int_{0}^{t}\|\sigma(s, u^{\varepsilon}_{v}(s))\|^{2}_{L_{Q}}ds + 2\sqrt{\varepsilon}\textbf{E} \underset{0 \le t\le T}{\sup} \int_{0}^{t} (\sigma(s, u^{\varepsilon}_{v}(s))dW(s), u^{\varepsilon}_{v}(s))_{g}ds.
		\end{aligned}
	\end{equation}
	For the last term on the left-hand side of \eqref{eq:2.35}, by \eqref{eq:2.10} we have
	\begin{equation}
		\label{eq:4.81}
		\begin{aligned}
		2\nu \textbf{E} \underset{0 \le t\le T}{\sup}\left|  \int_{0}^{t} (R_{g}u^{\varepsilon}_{v}(s), u^{\varepsilon}_{v}(s) )_{g}ds \right| \le \frac{\nu }{8} \textbf{E} \underset{0 \le t\le T}{\sup} \int_{0}^{t} \|u^{\varepsilon}_{v}(s)\|^{2}_{V_{g}}ds.
		\end{aligned}
	\end{equation}
	For the second term on the right-hand side of \eqref{eq:2.35}, by Young's inequality we have
	\begin{equation}
			\label{eq:2.36}
			\begin{aligned}
			 2 \textbf{E} \underset{0 \le t\le T}{\sup} \left| \int_{0}^{t} (f(s), u^{\varepsilon}_{v}(s))_{g}ds \right| & \le 2 \textbf{E} \underset{0 \le t\le T}{\sup} \int_{0}^{t}\|f(s)\|_{V^{*}}\|u^{\varepsilon}_{v}(s)\|_{g} ds 	
			 \\& \le \frac{1}{\nu}\textbf{E} \underset{0 \le t\le T}{\sup} \int_{0}^{t} \|f(s)\|^{2}_{V^{*}} ds + \nu  \textbf{E} \underset{0 \le t\le T}{\sup} \int_{0}^{t} \|u^{\varepsilon}_{v}(s)\|^{2}_{V_{g}}ds. 
			\end{aligned}
		\end{equation}
	For the third term on the right-hand side of \eqref{eq:2.35}, by condition (\textbf{A2}) and Young's inequality we have
	\begin{equation}
		\label{eq:2.37}
		\begin{aligned}
	& 2 \textbf{E} \underset{0 \le t\le T}{\sup} \left| \int_{0}^{t} (\sigma(s, u^{\varepsilon}_{v}(s))v(s), u^{\varepsilon}_{v}(s))_{g}ds \right|  \le 2 \textbf{E} \underset{0 \le t\le T}{\sup} \int_{0}^{t} |\sigma(s, u^{\varepsilon}_{v}(s))||v(s)||u^{\varepsilon}_{v}(s)|gdxds
	\\& \le 	\frac{2K}{\nu }\textbf{E} \underset{0 \le t\le T}{\sup} \int_{0}^{t} \|u^{\varepsilon}_{v}(s)\|^{2}_{g}\|v(s)\|^{2}_{L^{2}(0,T; H_{0,g})}ds  +  	\frac{\nu }{2K}\textbf{E} \underset{0 \le t\le T}{\sup} \int_{0}^{t} \|\sigma(s, u^{\varepsilon}_{v}(s))\|^{2}_{L_{Q}}ds
	\\& \le \frac{2K}{\nu} \textbf{E} \underset{0 \le t\le T}{\sup} \int_{0}^{t} \|u^{\varepsilon}_{v}(s)\|^{2}_{g}\|v(s)\|^{2}_{L^{2}(0,T; H_{0,g})}ds  +  	\frac{\nu }{2K}\textbf{E} \underset{0 \le t\le T}{\sup} \int_{0}^{t} K( 1+ \|u^{\varepsilon}_{v}(s)\|^{2}_{V_{g}}) ds. 
		\end{aligned}
	\end{equation}	
	For the fourth term on the right-hand side of \eqref{eq:2.35}, by condition (\textbf{A2}) we have	
	\begin{equation}
		\label{eq:2.38}
		\begin{aligned}
		\varepsilon \textbf{E} \underset{0 \le t\le T}{\sup} \int_{0}^{t}\|\sigma(s, u^{\varepsilon}_{v}(s))\|^{2}_{L_{Q}}ds \le \varepsilon \textbf{E} \underset{0 \le t\le T}{\sup} \int_{0}^{t}K(1+ \|u^{\varepsilon}_{v}(s)\|^{2}_{V_{g}}) ds.
		\end{aligned}
	\end{equation}	
	For the last term on the right-hand side of \eqref{eq:2.35}, by condition (\textbf{A2}) and BDG inequality we have	
		\begin{equation}
			\label{eq:2.39}
			\begin{aligned}
		&2\sqrt{\varepsilon}\textbf{E} \underset{0 \le t\le T}{\sup} \left| \int_{0}^{t} (\sigma(s, u^{\varepsilon}_{v}(s))dW(s), u^{\varepsilon}_{v}(s))_{g}ds\right| 
		\\& \le 2 \sqrt{\varepsilon}c_{2}\textbf{E} \left( \int_{0}^{T} \|\sigma(s, u^{\varepsilon}_{v}(s))\|^{2}_{L_{Q}} \|u^{\varepsilon}_{v}(s)\|^{2}_{g}ds \right)^{1/2} 
		\\& \le \frac{1}{2}\textbf{E}  \underset{0 \le t\le T}{\sup}  \|u^{\varepsilon}_{v}(t)\|^{2}_{g} + 2\varepsilon c^{2}_{2} \textbf{E} \int_{0}^{T} K(1+ \|u^{\varepsilon}_{v}(s)\|^{2}_{V_{g}})ds. 
			\end{aligned}
		\end{equation}
	By \eqref{eq:2.36}-\eqref{eq:2.39} we obtain that for all $t \in [0, T ]$ with $\varepsilon < \frac{\nu }{8K(1+2c^{2}_{2})}$, 
	\begin{equation}
		\label{eq:2.40}
		\begin{aligned}
 & \textbf{E}\left( \frac{1}{2} \underset{0 \le t\le T}{\sup}  \left( \|u^{\varepsilon}_{v}(t)\|^{2}_{g} + \nu  \int_{0}^{t} \|u^{\varepsilon}_{v}(s)\|^{2}_{V_{g}}ds \right) \right)
 \\& \le \|u_{0}\|^{2}_{g} + \frac{1}{\nu}\textbf{E} \underset{0 \le t\le T}{\sup} \int_{0}^{t} \|f(s)\|^{2}_{V^{*}} ds + \frac{2K}{\nu} \textbf{E} \underset{0 \le t\le T}{\sup} \int_{0}^{t} \|u^{\varepsilon}_{v}(s)\|^{2}_{g}\|v(s)\|^{2}_{L^{2}(0,T; H_{0,g})}ds + \frac{\nu T}{8}.
		\end{aligned}
	\end{equation}	
	Since $u_{0}\in H_{g}$ with $\| u_{0}\| \le R_{4}$ and $v \in \mathcal{A}_{M}$, then by Gronwall’s inequality implies that for all $t \in [0, T ]$, 	
		\begin{equation}
			\label{eq:2.49}
			\begin{aligned}
		&\textbf{E}\left(   \underset{0 \le t\le T}{\sup}\left( \|u^{\varepsilon}_{v}(t)\|^{2}_{g} + \nu  \int_{0}^{t} \|u^{\varepsilon}_{v}(s)\|^{2}_{V_{g}}ds \right)  \right) 
		\\& \le \left(2R^{2}_{4} + \frac{2}{\nu}\textbf{E}\int_{0}^{T} \|f(s)\|^{2}_{V^{*}} ds + \frac{\nu T}{4} \right) e^{\frac{4K}{\nu }\int_{0}^{T}\|v(s)\|^{2}_{L^{2}(0,T; H_{0,g})} ds}, 
			\end{aligned}
		\end{equation}
	which implies \eqref{eq:2.32} and thus completes the proof.	
	\end{proof}
	Next, the following lemma is concerned with the convergence of $\mathcal{G}^{\varepsilon}$ as $\varepsilon \to 0$, which is necessary to prove (\textbf{H2}).
	\begin{lem}
		\label{lem:1.20}
		Suppose (\textbf{A1})--(\textbf{A3}) hold, $T> 0$ and $ \{v^{\varepsilon} \} \subseteq  \mathcal{A}_{M}$ with $ 0 < M < \infty$. If $\{v^{\varepsilon} \}$ converges in distribution to $v$ as $\mathcal{S}_{M}$-valued random variables, then $\mathcal{G}^{\varepsilon}\left( W + \varepsilon^{-\frac{1}{2}}\int_{0}^{\cdot} v^{\varepsilon}(t)dt \right)$ converges to $\mathcal{G}^{0}\left( \varepsilon^{-\frac{1}{2}}\int_{0}^{\cdot} v^{\varepsilon}(t)dt \right)$  in $C([0,T]; H_{g}) \cap L^{2}(0,T; V_{g})$ in distribution. 
		\end{lem}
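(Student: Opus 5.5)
Note first that the target in the statement should be read as $\mathcal{G}^{0}(\int_0^\cdot v(t)\,dt)=u_v$, the solution of the controlled equation \eqref{eq:1.56} with the limiting control $v$. This is what (\textbf{H2}) requires.

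The plan is the standard Skorokhod/weak-convergence argument. By Lemma \ref{lem:3.38}, $u^{\varepsilon}_{v^{\varepsilon}}:=\mathcal{G}^{\varepsilon}(W+\varepsilon^{-1/2}\int_0^\cdot v^{\varepsilon}dt)$ is the unique solution of \eqref{eq:3.39} with control $v^{\varepsilon}$. Since $\mathcal{S}_M$ with the weak topology is Polish and compact, I will invoke the Skorokhod representation theorem. This gives a new probability space $(\tilde\Omega,\tilde{\mathcal F},\tilde P)$ carrying copies $(\tilde v^{\varepsilon},\tilde W^{\varepsilon})\overset{d}{=}(v^{\varepsilon},W)$ and $(\tilde v,\tilde W)$ such that $\tilde v^{\varepsilon}\to\tilde v$ weakly in $L^2(0,T;H_{0,g})$ almost surely. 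By pathwise uniqueness (Theorem \ref{thm:1.8}), the law of the solution is preserved, so it suffices to prove convergence in probability on this space. With the abuse of notation of dropping tildes, I will show
\[
\|u^{\varepsilon}_{v^{\varepsilon}}-u_v\|_{C([0,T];H_g)}+\|u^{\varepsilon}_{v^{\varepsilon}}-u_v\|_{L^2(0,T;V_g)}\to0
\]
in probability.

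First, I split the error through the deterministic controlled solution $u_{v^{\varepsilon}}$ of \eqref{eq:1.56} driven by the random control $v^{\varepsilon}$. For each $\omega$, the sequence $v^{\varepsilon}(\omega)$ lies in $\mathcal{S}_M$ and converges weakly to $v(\omega)$. Steps 1–4 of Lemma \ref{lem:3.6}, which rest on the tail estimate Lemma \ref{lem:3.57} and the continuity estimate \eqref{eq:1.85}, then give $u_{v^{\varepsilon}}\to u_v$ in $C([0,T];H_g)\cap L^2(0,T;V_g)$ pathwise. The bounds \eqref{eq:1.84} are uniform, so dominated convergence upgrades this to convergence in $L^2(\tilde\Omega)$.

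It remains to control $w^{\varepsilon}:=u^{\varepsilon}_{v^{\varepsilon}}-u_{v^{\varepsilon}}$. This process satisfies
\[
dw^{\varepsilon}+\bigl(\nu A_gw^{\varepsilon}+B_g(u^{\varepsilon}_{v^{\varepsilon}},u^{\varepsilon}_{v^{\varepsilon}})-B_g(u_{v^{\varepsilon}},u_{v^{\varepsilon}})+\nu R_gw^{\varepsilon}\bigr)dt=\bigl(\sigma(u^{\varepsilon}_{v^{\varepsilon}})-\sigma(u_{v^{\varepsilon}})\bigr)v^{\varepsilon}dt+\sqrt{\varepsilon}\,\sigma(u^{\varepsilon}_{v^{\varepsilon}})dW,
\]
with $w^{\varepsilon}(0)=0$. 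I will apply the Itô formula (Proposition \ref{prop:1.7}) to $\|w^{\varepsilon}\|_g^2$ and treat the terms as follows.
\begin{itemize}
\item The nonlinear term is handled exactly as in \eqref{eq:3.17}. It is bounded by $\frac{7\nu}{8}\|w^{\varepsilon}\|_{V_g}^2+c\|w^{\varepsilon}\|_g^2\|u_{v^{\varepsilon}}\|_{V_g}^2$.
\item The $R_g$ term costs $\frac{\nu}{8}\|w^{\varepsilon}\|_{V_g}^2$, by \eqref{eq:2.10}.
\item The control term is bounded by $\sqrt L\|v^{\varepsilon}\|_0\|w^{\varepsilon}\|_{V_g}\|w^{\varepsilon}\|_g\le\frac{\nu}{4}\|w^{\varepsilon}\|_{V_g}^2+c\|v^{\varepsilon}\|_0^2\|w^{\varepsilon}\|_g^2$.
\end{itemize}
The coefficient balance here is delicate, because $\sigma$ is Lipschitz only in the $V_g$-norm. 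I expect to need the slack in $\nu$ coming from \eqref{eq:1.2}, and possibly a sharper splitting than \eqref{eq:3.17}, for instance using Young's inequality with a smaller weight on the $V_g$ part.

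The Itô correction $\varepsilon\|\sigma(u^{\varepsilon}_{v^{\varepsilon}})\|_{L_Q}^2$ and the BDG bound on the martingale term are both $O(\varepsilon)$ after taking expectations. This uses (\textbf{A2}) and the uniform bound \eqref{eq:2.32}; the bound \eqref{eq:2.32} is stated for deterministic $\|u_0\|_g<R_4$ and $v\in\mathcal{A}_M$, which is our situation.

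The main obstacle is that the Gronwall factor $\exp(c\int_0^T(\|u_{v^{\varepsilon}}\|_{V_g}^2+\|v^{\varepsilon}\|_0^2)ds)$ is random. I plan to localize with the stopping time
\[
\tau_N=\inf\Bigl\{t:\int_0^t\|u^{\varepsilon}_{v^{\varepsilon}}\|_{V_g}^2ds+\sup_{s\le t}\|u^{\varepsilon}_{v^{\varepsilon}}(s)\|_g^2>N\Bigr\}\wedge T.
\]
The factor $\exp(c\int_0^T(\|u_{v^{\varepsilon}}\|_{V_g}^2+\|v^{\varepsilon}\|_0^2))$ is already bounded deterministically by \eqref{eq:1.84} and $v^{\varepsilon}\in\mathcal{A}_M$. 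Applying the stochastic Gronwall argument on $[0,\tau_N]$ gives
\[
\mathbf{E}\Bigl[\sup_{t\le\tau_N}\|w^{\varepsilon}\|_g^2+\nu\int_0^{\tau_N}\|w^{\varepsilon}\|_{V_g}^2\Bigr]\le C_N\,\varepsilon\to0.
\]
Chebyshev's inequality together with \eqref{eq:2.32} gives $\tilde P(\tau_N<T)\le M_1/N$, uniformly in $\varepsilon$. Letting $\varepsilon\to0$ and then $N\to\infty$ yields $w^{\varepsilon}\to0$ in probability in $C([0,T];H_g)\cap L^2(0,T;V_g)$. Combined with the first step, this gives the convergence in distribution claimed in the statement.
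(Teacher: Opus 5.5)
Your proposal is correct and is essentially the paper's argument. It uses the same split through the deterministic controlled solution $u_{v^{\varepsilon}}$, the same It\^o, stopping-time, Gronwall and BDG estimate giving $u^{\varepsilon}_{v^{\varepsilon}}-u_{v^{\varepsilon}}\to 0$ in probability, and the same appeal to Lemma~\ref{lem:3.6} with Skorokhod for $u_{v^{\varepsilon}}\to u_v$. The only structural difference is that the paper proves the first convergence on the original space and applies Skorokhod to the controls alone at the end, which avoids the tightness/sub-subsequence step your joint representation of $(v^{\varepsilon},W)$ tacitly requires. Your handling of the control term through the $V_g$-Lipschitz bound (\textbf{A3}) is actually more careful than the paper's \eqref{eq:4.18}, which uses $\|u^{\varepsilon}_{v^{\varepsilon}}-u_{v^{\varepsilon}}\|_g$ where (\textbf{A3}) only gives the $V_g$-norm. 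The coefficient budget is also not delicate: after the $\frac{\nu}{8}$ spent on $R_g$, $\frac{15\nu}{8}$ remains, and Young's inequality lets the nonlinear and control terms take arbitrarily small shares of it.
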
	
	\begin{proof}
		Let	 $u^{\varepsilon}_{v^{\varepsilon}}  = \mathcal{G}^{\varepsilon}\left( W + \varepsilon^{-\frac{1}{2}}\int_{0}^{\cdot} v^{\varepsilon}(t)dt \right)$. By Lemma  \ref{lem:3.6}, $u^{\varepsilon}_{v^{\varepsilon}}$ satisfies 
		\begin{equation}
			\label{eq:3.22}
			\begin{aligned}
	du^{\varepsilon}_{v^{\varepsilon}}(t) + (\nu A_{g}u^{\varepsilon}_{v^{\varepsilon}}(t) + B_{g}(u^{\varepsilon}_{v^{\varepsilon}}(t), u^{\varepsilon}_{v^{\varepsilon}}(t)) + \nu R_{g}u^{\varepsilon}_{v^{\varepsilon}}(t))dt &= f(t)dt + \sigma(t, u^{\varepsilon}_{v^{\varepsilon}}(t))v^{\varepsilon}(t)dt
	\\& +\sqrt{\varepsilon}\sigma(t, u^{\varepsilon}_{v^{\varepsilon}}(t))dW(t),
			\end{aligned}
		\end{equation}
	with initial condition $u^{\varepsilon}_{v^{\varepsilon}}(0) = u_{0} \in H_{g}$. Let $u_{v^{\varepsilon}} = \mathcal{G}^{0}\left(\int_{0}^{\cdot} v^{\varepsilon}(t)dt \right)$. Then $u_{v^{\varepsilon}}$ is the solution of 
	\begin{equation}
		\label{eq:3.25}
		\begin{aligned}
			du_{v^{\varepsilon}}(t) + (\nu A_{g}u_{v^{\varepsilon}}(t) + B_{g}(u_{v^{\varepsilon}}(t), u_{v^{\varepsilon}}(t)) + \nu R_{g}u_{v^{\varepsilon}}(t))dt = f(t)dt + \sigma(t, u_{v^{\varepsilon}}(t))v^{\varepsilon}(t)dt,
		\end{aligned}
	\end{equation}
	with initial condition $u_{v^{\varepsilon}}(0) = u_{0} \in H_{g}$. We will now show that $ u^{\varepsilon}_{v^{\varepsilon}}(t)$ converges to $u_{v}$ in $C([0, T ]; H_{g}) \cap L^{2}(0, T ; V_{g} )$ in distribution as $\varepsilon \to 0$. By \eqref{eq:3.22} and \eqref{eq:3.25}, we have 
	\begin{equation}
		\label{eq:4.15}
		\begin{aligned}
		&d(u^{\varepsilon}_{v^{\varepsilon}}(t) - u_{v^{\varepsilon}}(t)) + \nu A_{g}(u^{\varepsilon}_{v^{\varepsilon}}(t) - u_{v^{\varepsilon}}(t))dt + (B_{g}(u^{\varepsilon}_{v^{\varepsilon}}(t), u^{\varepsilon}_{v^{\varepsilon}}(t)) - B_{g}(u_{v^{\varepsilon}}(t), u_{v^{\varepsilon}}(t)))dt 
		\\& +\nu R_{g}(u^{\varepsilon}_{v^{\varepsilon}}(t) - u_{v^{\varepsilon}}(t))dt 
		\\&= (\sigma(t, u^{\varepsilon}_{v^{\varepsilon}}(t))v^{\varepsilon}(t) - \sigma(t, u_{v^{\varepsilon}}(t))v^{\varepsilon}(t))dt + \sqrt{\varepsilon}\sigma(t, u^{\varepsilon}_{v^{\varepsilon}})dW(t), 
		\end{aligned}
	\end{equation}
	with initial value $u^{\varepsilon}_{v^{\varepsilon}}(0) - u_{v^{\varepsilon}}(0) = 0$. By \eqref{eq:4.15} and It\^o formula, we obtain
	\begin{equation}
		\label{eq:3.27}
		\begin{aligned}
			&\| u^{\varepsilon}_{v^{\varepsilon}}(t) - u_{v^{\varepsilon}}(t)\|^{2}_{g} + 2\nu  \int_{0}^{t}\| u^{\varepsilon}_{v^{\varepsilon}}(s) - u_{v^{\varepsilon}}(s) \|^{2}_{V_{g}} ds
			 + 2 \nu \int_{0}^{t}(R_{g}(u^{\varepsilon}_{v^{\varepsilon}}(s) - u_{v^{\varepsilon}}(s)), u^{\varepsilon}_{v^{\varepsilon}}(s) - u_{v^{\varepsilon}}(s))_{g}ds 
			 \\& + 2\int_{0}^{t}(B_{g}(u^{\varepsilon}_{v^{\varepsilon}}(s), u^{\varepsilon}_{v^{\varepsilon}}(s)) -B_{g}(u_{v^{\varepsilon}}(s), u_{v^{\varepsilon}}(s)) , u^{\varepsilon}_{v^{\varepsilon}}(s) - u_{v^{\varepsilon}}(s))_{g} ds
			 \\& = 2 \int_{0}^{t} (\sigma(s,u^{\varepsilon}_{v^{\varepsilon}}(s))v^{\varepsilon}(s) - \sigma(s,u_{v^{\varepsilon}}(s))v^{\varepsilon}(s), u^{\varepsilon}_{v^{\varepsilon}}(s) - u_{v^{\varepsilon}}(s))_{g}ds + \varepsilon \int_{0}^{t} tr\sigma(s,u^{\varepsilon}_{v^{\varepsilon}}(s))Q\sigma^{*}(s,u^{\varepsilon}_{v^{\varepsilon}}(s))ds
			\\& + 2\sqrt{\varepsilon}\int_{0}^{t} (\sigma(s,u^{\varepsilon}_{v^{\varepsilon}}(s))dW(s),u^{\varepsilon}_{v^{\varepsilon}}(s) - u_{v^{\varepsilon}}(s))_{g}.
		\end{aligned}	
	\end{equation}
	For the third term on the left-hand side of  \eqref{eq:3.27}, by Lemma \ref{lem:3.65}, we have 
	\begin{equation}
		\label{eq:4.16}
		\begin{aligned}
	2 \nu \left|\int_{0}^{t}(R_{g}(u^{\varepsilon}_{v^{\varepsilon}}(s) - u_{v^{\varepsilon}}(s)), u^{\varepsilon}_{v^{\varepsilon}}(s) - u_{v^{\varepsilon}}(s))_{g}ds\right| 
	 \le \frac{\nu }{8} \int_{0}^{t}\|u^{\varepsilon}_{v^{\varepsilon}}(s) - u_{v^{\varepsilon}}(s)\|^{2}_{V_{g}}ds. 
		\end{aligned}
	\end{equation}
	For the fourth term on the left-hand side of  \eqref{eq:3.27}, by \eqref{eq:2.32}, \eqref{eq:1.16} and Young's inequality, we have 
	\begin{equation}
		\label{eq:4.17}
		\begin{aligned}
		& 2\left| \int_{0}^{t}(B_{g}(u^{\varepsilon}_{v^{\varepsilon}}(s), u^{\varepsilon}_{v^{\varepsilon}}(s)) -B_{g}(u_{v^{\varepsilon}}(s), u_{v^{\varepsilon}}(s)) , u^{\varepsilon}_{v^{\varepsilon}}(s) - u_{v^{\varepsilon}}(s))_{g} \right|
		\\& \le 2\left|\int_{0}^{t} b_{g}(u^{\varepsilon}_{v^{\varepsilon}}(s), u^{\varepsilon}_{v^{\varepsilon}}(s)- u_{v^{\varepsilon}}(s), u^{\varepsilon}_{v^{\varepsilon}}(s) - u_{v^{\varepsilon}}(s))ds \right| 
		\\& +2\left|\int_{0}^{t} b_{g}(u^{\varepsilon}_{v^{\varepsilon}}(s) - u_{v^{\varepsilon}}(s), u_{v^{\varepsilon}}(s), u^{\varepsilon}_{v^{\varepsilon}}(s) - u_{v^{\varepsilon}}(s))ds \right|
		\\& \le 2\int_{0}^{t}\int_{\mathcal{O}}^{} |u^{\varepsilon}_{v^{\varepsilon}}(s) - u_{v^{\varepsilon}}(s)| |\nabla u_{v^{\varepsilon}}(s) ||u^{\varepsilon}_{v^{\varepsilon}}(s) - u_{v^{\varepsilon}}(s)| g dxds
		\\& \le 2\int_{0}^{t} \|u^{\varepsilon}_{v^{\varepsilon}}(s) - u_{v^{\varepsilon}}(s)\|^{2}_{L^{4}} \|\nabla u_{v^{\varepsilon}}(s)\|_{g}ds
		\\& \le \frac{2\sqrt{2M_{0}}}{m_{0}}\int_{0}^{t} \|u^{\varepsilon}_{v^{\varepsilon}}(s) - u_{v^{\varepsilon}}(s)\|_{g}\|u^{\varepsilon}_{v^{\varepsilon}}(s) - u_{v^{\varepsilon}}(s)\|_{V_{g}} \| u_{v^{\varepsilon}}(s)\|_{V_{g}}ds
		\\& \le \frac{7\nu}{8}\int_{0}^{t} \|u^{\varepsilon}_{v^{\varepsilon}}(s) - u_{v^{\varepsilon}}(s)\|^{2}_{V_{g}}ds + \frac{16M_{0}}{7\nu m^{2}_{0}}\int_{0}^{t} \|u^{\varepsilon}_{v^{\varepsilon}}(s) - u_{v^{\varepsilon}}(s)\|^{2}_{g}\| u_{v^{\varepsilon}}(s)\|^{2}_{V_{g}}ds. 
		\end{aligned}
	\end{equation}
	For the first term on the right-hand side of  \eqref{eq:3.27}, by \eqref{eq:2.32}, \eqref{eq:1.16} and H\"older inequality, we have
	\begin{equation}
		\label{eq:4.18}
		\begin{aligned}
		&2 \left| \int_{0}^{t} (\sigma(s,u^{\varepsilon}_{v^{\varepsilon}}(s)v^{\varepsilon}(s) - \sigma(s,u_{v^{\varepsilon}}(s))v^{\varepsilon}(s), u^{\varepsilon}_{v^{\varepsilon}}(s) - u_{v^{\varepsilon}}(s))_{g}ds  \right|
		\\& \le 2\int_{0}^{t}\|\sigma(s,u^{\varepsilon}_{v^{\varepsilon}}(s))-\sigma(s,u_{v^{\varepsilon}}(s)) \|_{L_{Q}} \|v^{\varepsilon}(s)\|_{L^{2}(0,T; H_{0,g})} \|u^{\varepsilon}_{v^{\varepsilon}}(s) - u_{v^{\varepsilon}}(s)\|_{g}ds
		\\& \le 2\sqrt{L} \int_{0}^{t}\|v^{\varepsilon}(s)\|_{L^{2}(0,T; H_{0,g})} \|u^{\varepsilon}_{v^{\varepsilon}}(s) - u_{v^{\varepsilon}}(s)\|^{2}_{g}ds. 
		\end{aligned}
	\end{equation}
	It follows from \eqref{eq:4.16} - \eqref{eq:4.18} that for all $t \in [0,T]$,  
	\begin{equation}
		\label{eq:4.19}
		\begin{aligned}
		& \| u^{\varepsilon}_{v^{\varepsilon}}(t) - u_{v^{\varepsilon}}(t)\|^{2}_{g} + \nu  \int_{0}^{t}\| u^{\varepsilon}_{v^{\varepsilon}}(s) - u_{v^{\varepsilon}}(s) \|^{2}_{V_{g}} ds
		\\&  \le \frac{16M_{0}}{7\nu m^{2}_{0}}\int_{0}^{t} \|u^{\varepsilon}_{v^{\varepsilon}}(s) - u_{v^{\varepsilon}}(s)\|^{2}_{g}\| u_{v^{\varepsilon}}(s)\|^{2}_{V_{g}}ds + 2\sqrt{L} \int_{0}^{t}\|v^{\varepsilon}(s)\|_{L^{2}(0,T; H_{0,g})} \|u^{\varepsilon}_{v^{\varepsilon}}(s) - u_{v^{\varepsilon}}(s)\|^{2}_{g}ds 
		\\& +  \varepsilon \int_{0}^{t} \|\sigma(s,u^{\varepsilon}_{v^{\varepsilon}}(s))\|^{2}_{L_{Q}}ds + 2\sqrt{\varepsilon}\int_{0}^{t} (\sigma(s,u^{\varepsilon}(s)_{v^{\varepsilon}})dW(s),u^{\varepsilon}_{v^{\varepsilon}}(s) - u_{v^{\varepsilon}}(s))_{g}.
		\end{aligned}
	\end{equation}
		Given $ N >0$ and $\varepsilon \in (0,1)$, define a stopping time 
	\begin{equation}
		\label{eq:3.29}
		\begin{aligned}
			\tau_{N,\varepsilon} := inf\left\{t \ge 0:  \|u^{\varepsilon}_{v^{\varepsilon}}(t)\|^{2}_{g}  \ge  N\right\} \wedge T.
		\end{aligned}
	\end{equation}
	By \eqref{eq:4.19} we have for all $t \in [0,T]$,  
	\begin{equation}
		\label{eq:4.20}
		\begin{aligned}
		& \underset{0\le r\le t}{\sup} \left( \| u^{\varepsilon}_{v^{\varepsilon}}(r \wedge \tau_{N,\varepsilon}) - u_{v^{\varepsilon}}(r \wedge \tau_{N,\varepsilon})\|^{2}_{g} + \nu  \int_{0}^{r \wedge \tau_{N,\varepsilon}}\| u^{\varepsilon}_{v^{\varepsilon}}(s) - u_{v^{\varepsilon}}(s) \|^{2}_{V_{g}} ds \right)
		\\&  \le \int_{0}^{r \wedge \tau_{N,\varepsilon}} \left( \frac{16M_{0}}{7\nu m^{2}_{0}}\|u_{v^{\varepsilon}}(s)\|^{2}_{V_{g}} + 2\sqrt{L}\|v^{\varepsilon}(s)\|_{L^{2}(0,T; H_{0,g})}  \right) \underset{0 \le r \le s}{\sup}\|u^{\varepsilon}_{v^{\varepsilon}}(r \wedge \tau_{N,\varepsilon}) - u_{v^{\varepsilon}}(r \wedge \tau_{N,\varepsilon})\|^{2}_{g}ds 
		\\& +  \varepsilon \int_{0}^{r \wedge \tau_{N,\varepsilon}}\|\sigma(s,u^{\varepsilon}_{v^{\varepsilon}}(s))\|^{2}_{L_{Q}}ds + 2\sqrt{\varepsilon}\underset{0\le r\le t}{\sup}\int_{0}^{r \wedge \tau_{N,\varepsilon}} (\sigma(s,u^{\varepsilon}(s)_{v^{\varepsilon}})dW(s),u^{\varepsilon}_{v^{\varepsilon}}(s) - u_{v^{\varepsilon}}(s))_{g}.
		\end{aligned}
	\end{equation}
	By \eqref{eq:4.20} and Gronwall's inequality, we have that for any $t \in [0,T]$, 
	\begin{equation}
		\label{eq:4.21}
		\begin{aligned}
		& \underset{0\le r\le T}{\sup} \left( \| u^{\varepsilon}_{v^{\varepsilon}}(r \wedge \tau_{N,\varepsilon}) - u_{v^{\varepsilon}}(r \wedge \tau_{N,\varepsilon})\|^{2}_{g} + \nu  \int_{0}^{r \wedge \tau_{N,\varepsilon}}\| u^{\varepsilon}_{v^{\varepsilon}}(s) - u_{v^{\varepsilon}}(s) \|^{2}_{V_{g}} ds \right)
	    \\& \le  \varepsilon e^{c_{8}}  \int_{0}^{r \wedge \tau_{N,\varepsilon}} \|\sigma(s,u^{\varepsilon}_{v^{\varepsilon}}(s) )\|^{2}_{L_{Q}} ds
		+ 2\sqrt{\varepsilon}e^{c_{8}}  \underset{0\le r\le T}{\sup}\int_{0}^{T \wedge \tau_{N,\varepsilon}} (\sigma(s,u^{\varepsilon}_{v^{\varepsilon}}(s))dW(s),u^{\varepsilon}_{v^{\varepsilon}}(s) - u_{v^{\varepsilon}}(s))_{g}, 
		\end{aligned}
	\end{equation}
	where $c_{8} = (\frac{16M_{0}}{7m^{2}_{0}}C_{1} + 2\sqrt{LM})T > 0$.  For the first term on the right-hand side \eqref{eq:4.21}, by condition (\textbf{A2}) and \eqref{eq:4.38}, we have 
	\begin{equation}
		\label{eq:4.22}
		\begin{aligned}
		  \underset{\varepsilon \to 0}{\rm{lim}}\,\varepsilon e^{c_{8}}  \int_{0}^{T \wedge \tau_{N,\varepsilon}} \|\sigma(s,u^{\varepsilon}_{v^{\varepsilon}}(s) )\|^{2}_{L_{Q}} ds
		  \le \underset{\varepsilon \to 0}{\rm{lim}}\,\varepsilon e^{c_{8}} K\int_{0}^{T \wedge \tau_{N,\varepsilon}} (1+ \|u^{\varepsilon}_{v^{\varepsilon}}(s)\|^{2}_{V_{g}}) ds
		 = 0, \mathbb{P} - almost\,surely. 
		\end{aligned}
	\end{equation}
	In addition, by Lemma  \ref{lem:1.13}, we have that there exists $ C'_{1} = C'_{1}(M, T) > 0$ such that 
	\begin{equation}
		\label{eq:4.23}
		\begin{aligned}
			\underset{\varepsilon \in (0, 1)}{\sup} \|u_{v^{\varepsilon}} \|_{g} \le C'_{1}. 
		\end{aligned}
	\end{equation}
	For the last term on the right-hand side of \eqref{eq:4.21}, by \eqref{eq:4.22}, \eqref{eq:4.23} and Doob's maximal inequality, we obtain
	\begin{equation}
		\label{eq:4.24}
		\begin{aligned}
	& \underset{\varepsilon \to 0}{\rm{lim}}   \textbf{E} \left[ \underset{0\le r\le T}{\sup} \left|  \int_{0}^{r \wedge \tau_{N,\varepsilon}}2\sqrt{\varepsilon}\, e^{c_{8}}\, (\sigma(s,u^{\varepsilon}_{v^{\varepsilon}}(s))dW(s),u^{\varepsilon}_{v^{\varepsilon}}(s) - u_{v^{\varepsilon}}(s))_{g} \right|^{2} \right]
	\\& \le  \underset{\varepsilon \to 0}{\rm{lim}} 16 \varepsilon e^{2c_{8}}  \textbf{E} \left[\int_{0}^{T \wedge \tau_{N,\varepsilon}} \|u^{\varepsilon}_{v^{\varepsilon}}(s) - u_{v^{\varepsilon}}(s)\|^{2}_{g} \| \sigma(s,u^{\varepsilon}(s))\|^{2}_{L_{Q}}ds\right]
	\\& \le 16e^{2c_{8}} (N + C'_{1})^{2} \underset{\varepsilon \to 0}{\rm{lim}} \textbf{E} \left[\varepsilon \int_{0}^{ T \wedge \tau_{N,\varepsilon}} \| \sigma(s,u^{\varepsilon}(s))\|^{2}_{L_{Q}} ds\right]
	\\& =0.
		\end{aligned}
	\end{equation}
	It follows from \eqref{eq:4.21}, \eqref{eq:4.22} and \eqref{eq:4.24} that 
	\begin{equation}
		\label{eq:4.25}
		\begin{aligned}
		\underset{\varepsilon \to 0}{\rm{lim}}\,\,  \underset{0\le r\le T}{\sup} \|u^{\varepsilon}_{v^{\varepsilon}}(r \wedge \tau_{N,\varepsilon}) - u_{v^{\varepsilon}}(r \wedge \tau_{N,\varepsilon})\|^{2}_{g} = 0 \,\,in\, \, probability, 
		\end{aligned}
	\end{equation}
	and 
	\begin{equation}
		\label{eq:4.26}
		\begin{aligned}
			\underset{\varepsilon \to 0}{\rm{lim}}  \int_{0}^{T\wedge \tau_{N,\varepsilon}} \|u^{\varepsilon}_{v^{\varepsilon}}(s) - u_{v^{\varepsilon}}(s)\|^{2}_{V_{g}} ds = 0 \,\,in\, \, probability.
		\end{aligned}
	\end{equation}
	On the other hand by  \eqref{eq:2.32} and  \eqref{eq:3.29} we have for all $\varepsilon \in (0,1)$,  
	\begin{equation}
		\label{eq:4.27}
		\begin{aligned}
		\mathbb{P}(\tau_{N,\varepsilon} < T  ) \le \mathbb{P}\left(\underset{t \in [0,T]}{\sup} \|u^{\varepsilon}_{v^{\varepsilon}}(t)\|_{g} \ge N \right) \le \dfrac{1}{N^{2}}\textbf{E}\left( \underset{t \in [0,T]}{\sup} \|u^{\varepsilon}_{v^{\varepsilon}}(t)\|^{2}_{g}\right) \le \frac{M_{1}}{N^{2}},
		\end{aligned}
	\end{equation}
	where $M_{1} = M_{1}( T, M) >0$. It follows from \eqref{eq:4.27} that for every $\eta > 0$,
	\begin{align*}
		& \mathbb{P}\left(\underset{t \in [0,T]}{\sup}\|u^{\varepsilon}_{v^{\varepsilon}}(t) - u_{v^{\varepsilon}}(t) \|_{g}  > \eta \right) 
		\\& \le \mathbb{P}\left(\underset{t \in [0,T]}{\sup}\|u^{\varepsilon}_{v^{\varepsilon}}(t) - u_{v^{\varepsilon}}(t) \|_{g}  > \eta, \tau_{N,\varepsilon} =T \right) + \mathbb{P}\left(\underset{t \in [0,T]}{\sup}\|u^{\varepsilon}_{v^{\varepsilon}}(t) - u_{v^{\varepsilon}}(t) \|_{g}  > \eta, \tau_{N,\varepsilon} < T \right) 
		\\& \le \mathbb{P}\left(\underset{t \in [0,T]}{\sup}\|u^{\varepsilon}_{v^{\varepsilon}}(t \wedge \tau_{N,\varepsilon}) - u_{v^{\varepsilon}}(t \wedge \tau_{N,\varepsilon}) \|_{g}  > \eta \right) + \frac{M_{1}}{N^{2}}.
	\end{align*}
	First letting $\varepsilon \to 0$ and  then $N \to \infty$, we get from \eqref{eq:4.25} that 
	\begin{equation}
		\label{eq:4.29}
		\begin{aligned}
		\underset{\varepsilon \to 0}{\rm{lim}} \left( u^{\varepsilon}_{v^{\varepsilon}} - u_{v^{\varepsilon}} \right) = 0 \,\, in\,\, C([0,T]; H_{g}) \,\, in\, probability.
		\end{aligned}
	\end{equation}
	Similarly, by \eqref{eq:4.27} we have, for every $\eta > 0$, 
	\begin{align*}
		& \mathbb{P}\left(\int_{0}^{T} \|u^{\varepsilon}_{v^{\varepsilon}}(t) - u_{v^{\varepsilon}}(t) \|^{2}_{V_{g}}dt  > \eta^{2} \right) 
		\\& \le \mathbb{P}\left(\int_{0}^{T} \|u^{\varepsilon}_{v^{\varepsilon}}(t) - u_{v^{\varepsilon}}(t) \|^{2}_{V_{g}} dt > \eta^{2}, \tau_{N,\varepsilon} =T \right) + \mathbb{P}\left(\int_{0}^{T} \|u^{\varepsilon}_{v^{\varepsilon}}(t) - u_{v^{\varepsilon}}(t) \|^{2}_{V_{g}}dt  > \eta^{2}, \tau_{N,\varepsilon} < T \right) 
		\\& \le \mathbb{P}\left(\int_{0}^{T \wedge \tau_{N,\varepsilon}} \|u^{\varepsilon}_{v^{\varepsilon}}(t) - u_{v^{\varepsilon}}(t) \|^{2}_{V_{g}}dt  > \eta^{2} \right) + \frac{M_{1}}{N^{2}}.
		\end{align*}
		First letting $\varepsilon \to 0$ and then $N \to \infty$, we get from \eqref{eq:4.26} that 
		\begin{equation}
			\label{eq:4.31}
			\begin{aligned}
				\underset{\varepsilon \to 0}{\rm{lim}} \left( u^{\varepsilon}_{v^{\varepsilon}} - u_{v^{\varepsilon}} \right) = 0 \,\, in\,\, L^{2}(0,T; V_{g}) \,\, in\, probability.
			\end{aligned}
		\end{equation}
		It follows from \eqref{eq:4.29} and \eqref{eq:4.31} that 
		\begin{equation}
			\label{eq:4.32}
			\begin{aligned}
				\underset{\varepsilon \to 0}{\rm{lim}} \left( u^{\varepsilon}_{v^{\varepsilon}} - u_{v^{\varepsilon}} \right) = 0 \,\, in\,\, C([0,T]; H_{g}) \cap  L^{2}(0,T; V_{g}) \,\, in\, probability.
			\end{aligned}
		\end{equation}
		By assumption, we know $\{ v^{\varepsilon} \}$ converges in distribution to $v$ as $\mathcal{S}_{M}$-valued random element, and hence by Skorokhod's Theorem, there exist a probability space $(\tilde{\Omega}, \tilde{\mathcal{F}}, \tilde{\mathbb{P}}  )$ and $\mathcal{S}_{M}$-valued random variables $\tilde{v}^{\varepsilon}$ and $\tilde{v}$ on $(\tilde{\Omega}, \tilde{\mathcal{F}}, \tilde{\mathbb{P}}  )$ such that the law of $\tilde{v}^{\varepsilon}$ and $\tilde{v}$ are equal to that of $v^{\varepsilon}$ and $v$ respectively. Furthermore, $\{\tilde{v}^{\varepsilon}\}$ converges to $\tilde{v}$ almost surely in $\mathcal{S}_{M}$, it follows from Lemma \ref{lem:3.6} that
		\begin{equation}
			\label{eq:4.33}
			\begin{aligned}
			u_{\tilde{v}^{\varepsilon}} \to  u_{\tilde{v}}	\,\, in\,\, C([0,T]; H_{g}) \cap  L^{2}(0,T; V_{g}) \,\, almost\,\, surely,
			\end{aligned}
		\end{equation}
		and hence it is also convergent in distribution, which implies that 
\begin{equation}
	\label{eq:4.34}
	\begin{aligned}
		u_{v^{\varepsilon}} \to  u_{v}	\,\, in\,\, C([0,T]; H_{g}) \cap  L^{2}(0,T; V_{g}) \,\, in \,\, distribution.
	\end{aligned}
\end{equation}
	By \eqref{eq:4.33} and \eqref{eq:4.34} we infer that 
	\begin{equation}
		\label{eq:4.35}
		\begin{aligned}
			u^{\varepsilon}_{v^{\varepsilon}} \to  u_{v}	\,\, in\,\, C([0,T]; H_{g}) \cap  L^{2}(0,T; V_{g}) \,\, in \,\, distribution, 
		\end{aligned}
	\end{equation}
	as desired. The proof is thus completed.
	
	\end{proof}
	\par 
	\textbf{Proof of Theorem \ref{thm:1.12}}. Lemmas \ref{lem:3.6} and \ref{lem:1.20} verify conditions (\textbf{H1}) and (\textbf{H2}), respectively. Proposition \ref{prop:1.11} therefore yields Theorem \ref{thm:1.12}.


\section*{Acknowledgement}
\indent
\par
We thank the anonymous reviewers for their constructive comments. This work was 
partially supported by the National Natural Science Foundation of China(No.12326414), Sichuan 
Science and Technology Program(No.2023NSFSC0076).

\end{document}